\documentclass{tac}
\usepackage{amsmath,amssymb}
\usepackage{graphicx}
\usepackage{tikz-cd}
\usepackage{tikz}
\input diagxy
\usepackage{enumitem}

\author{Hongliang Lai and Bo Pang}

\thanks{We would like to thank the support of the National Natural Science Foundation of China (No. 12671089) 
}

\address{School of Mathematics, Sichuan University, Chengdu 610064, China.}

\title{Concept lattices of $Q$-relations via diagonal and back-diagonal categories }

\copyrightyear{2020}

\subjclass{18D20, 18F75, 06B23, 03E72}

\keywords{quantale, enriched category, back diagonal, diagonal,
concept lattice, formal concept analysis, rough set theory, regular relation,
constructively completely distributive, tensor product}

\eaddress{hllai@scu.edu.cn\CR  bobiopen28@gmail.com}

\begin{document}

\def\oto{{\bfig\morphism<180,0>[\mkern-4mu`\mkern-4mu;]\place(86,0)[\circ]\efig}}
\def\rto{{\bfig\morphism<180,0>[\mkern-4mu`\mkern-4mu;]\place(82,0)[\mapstochar]\efig}}
\def\nto{{\bfig\morphism<180,0>[\mkern-4mu`\mkern-4mu;]\place(86,0)[\shortmid]\efig}}

\newcommand{\ra}{\rightarrow}
\newcommand{\QRel}{Q\text{-}\mathbf{Rel}}
\newcommand{\stk}{{\star_\mathrm{k}}}
 
\newcommand{\op}{\mathrm{op}}
\newcommand{\rmd}{\mathrm{d}}
\newcommand{\rmk}{\mathrm{k}}
\newcommand{\odt}{{\,{\scriptstyle\odot}\,}}
\newcommand{\sigt}{{\{\star\}}}
\newcommand{\bbA}{\mathbb{A}}
\newcommand{\bbB}{\mathbb{B}}
\newcommand{\bbC}{\mathbb{C}}
\newcommand{\bbQ}{\mathbb{Q}}
\newcommand{\CP}{\mathcal{P}}
 
\newcommand{\DQRel}{{\mathbf{D}(Q\text{-}\mathbf{Rel})}}
\newcommand{\BQRel}{{\mathbf{B}(Q\text{-}\mathbf{Rel})}}
\newcommand{\frB}{\mathfrak{B}}
\newcommand{\frD}{\mathfrak{D}}
\newcommand{\Sup}{\mathbf{Sup}}
\newcommand{\QSup}{{Q\text{-}\mathbf{Sup}}}

\maketitle

\begin{abstract}
Let $Q$ be a commutative unital quantale and $\QRel$ the ordered category of sets and $Q$-relations. We develop a categorical foundation for formal and property-oriented concept lattices of $Q$-relations via the diagonal category $\DQRel$ and back-diagonal category $\BQRel$. Our main results are: (1) the formal concept lattice construction is induced by a representable functor on $\BQRel$; (2) the property-oriented concept lattice construction corresponds to a representable functor on $\DQRel$; and (3) for regular $Q$-relations, the property-oriented concept lattice of their pointwise tensor product is naturally isomorphic to the tensor product of the respective property-oriented concept lattices.
\end{abstract}
\section{Introduction}
Formal concept analysis (FCA) and rough set theory (RST) are two complementary mathematical frameworks for extracting hierarchical conceptual structures from relational data, with wide applications in knowledge representation, data mining, and qualitative reasoning. Classical FCA produces formal concept lattices from formal contexts \cite{Ganter2024, Wille1982}, while RST yields property-oriented concept lattices \cite{Duntsch2002, Pawlak1982, Yao2004}, offering a dual paradigm for conceptual knowledge acquisition. To model fuzzy and graded relational information, these classical theories have been generalized to quantitative settings with truth values in a commutative unital quantale \(Q\), leading to the study of \(Q\)-contexts and their associated \(Q\)-valued concept lattices \cite{Belohlavek2002, Belohlavek2004, Georgescu2004, LaiZhang2009, Popescu2004}. This quantale-valued generalization recovers the classical binary framework when \(Q=\{0,1\}\) and provides a unified foundation for diverse quantitative concept analysis models.

The functorial behavior of concept lattices associated with \(Q\)-contexts is a fundamental topic in generalized formal concept analysis. For classical contexts, Ern\'{e} \cite{Erne1994,Erne2014} showed that the formal concept lattice construction defines a functor from the category \(\mathbf{CC}\) of contexts and conceptual morphisms to the category \(\mathbf{CLC}\) of complete lattices and complete lattice morphisms. Moreover, this functor is left adjoint to the inclusion \(\mathbf{CLC}\to\mathbf{CC}\), so that \(\mathbf{CLC}\) forms a reflective subcategory of \(\mathbf{CC}\).

Motivated by these classical results, this paper systematically investigates the functorial properties of both formal and property-oriented concept lattices over arbitrary \(Q\)-contexts.
Enriched categorical methods, including \(Q\)-categories\cite{Lawvere1973, Stubbe2005a, Stubbe2005b, Stubbe2007a} and (back-)diagonal constructions on quantaloids \cite{Shen2016,Shen2021, ShenTaoZhang2016,Stubbe2014}, have proven powerful for characterizing structural properties of quantitative relational systems. We construct the category \(\QSup\) of skeletal cocomplete \(Q\)-categories and left adjoint \(Q\)-functors, which provides a unified categorical environment for all formal and property-oriented concept lattices arising from \(Q\)-contexts. Based on the quantaloid \(\QRel\) of \(Q\)-relations, we establish an explicit functorial correspondence between categorical diagonal structures and the two standard quantitative concept analysis paradigms. Precisely, the FCA-based formal concept lattice construction for \(Q\)-contexts is induced by a representable functor \(\frB\) on the back-diagonal category \(\BQRel\), while the RST-based property-oriented concept lattice construction corresponds to a representable functor \(\frD\) on the diagonal category \(\DQRel\).

This functorial characterization clarifies how concept lattice assignments preserve morphisms and reveals essential structural differences between the two families of \(Q\)-valued concept lattices. In particular, if \(Q\) is a Girard quantale satisfying the double negation law, complementation of \(Q\)-relations induces an isomorphism between \(\BQRel\) and \(\DQRel\). This categorical isomorphism further yields a natural duality between the representable functors \(\frB\) and \(\frD\), turning both functors into categorical equivalences. This result recovers and extends the classical duality between fuzzy formal concept analysis and rough-set-based concept analysis \cite{LaiZhang2009,Yao2004} to a higher categorical level.

A central problem concerning the categorical structure of \(Q\)-contexts concerns the compatibility of two distinct tensor product constructions: the pointwise tensor product of \(Q\)-contexts used in concept analysis \cite{Erne1994, Ganter2024}, and the standard lattice tensor product in category theory \cite{Banaschewski1976, Garcia2017, JoyalTierney1984}. In general, the concept lattice of a pointwise tensor product of \(Q\)-relations fails to be isomorphic to the tensor product of the concept lattices of the original factors.

Building on the unified categorical interpretation of quantitative concept lattices, this paper resolves this tensor product compatibility problem. Exploiting the canonical monoidal closed structure of \(\QSup\), we prove that, for regular \(Q\)-relations, the property-oriented concept lattice of their pointwise tensor product is naturally isomorphic to the tensor product of their individual property-oriented concept lattices in \(\QSup\). Regular \(Q\)-relations are linked to constructively completely distributive (ccd) \(Q\)-categories via the representable functor $\frD$ over \(\DQRel\) \cite{LaiShen2018, Stubbe2007b}, implying that the full subcategory \(\QSup_{\mathrm{ccd}}\) of ccd \(Q\)-categories inherits a well-defined monoidal closed structure from \(\QSup\). Furthermore, when \(Q\) is Girard, the above compatibility result dualizes to formal concept lattices. We also provide explicit counterexamples to demonstrate that the regularity assumption on \(Q\)-relations is indispensable for the tensor product isomorphism to hold.

The remainder of the paper is organized as follows. Section 2 collects necessary preliminaries on commutative unital quantales, \(Q\)-relations, \(Q\)-categories, distributors, cocomplete \(Q\)-categories, and the monoidal closed structure of \(\QSup\). Section 3 studies the back-diagonal category \(\BQRel\), verifies its \(\QSup\)-enrichment, and identifies its representable functor $\frB$ with the formal concept lattice construction for \(Q\)-contexts. Section 4 develops the dual theory for the diagonal category \(\DQRel\), establishes its \(\QSup\)-enrichment, connects its representable functor $\frD$ to rough-set-based property-oriented concept lattices, discusses the dual isomorphism induced by Girard quantales, and illustrates the non-fullness of the representable functor via an explicit example. Section 5 investigates pointwise tensor products of regular \(Q\)-relations, characterizes the monoidal closed structure of idempotent \(Q\)-relations, proves the main tensor product isomorphism theorem for property-oriented concept lattices, and validates the necessity of the regularity condition by counterexamples.


\section{Preliminaries} \label{sec:prelim}

Let \(\Sup\) denote the category of complete lattices and arbitrary-join-preserving maps. A \emph{commutative unital quantale} \cite{Rosenthal1990} is precisely a commutative monoid object in \(\Sup\). Concretely, it is a complete lattice \(Q\) equipped with a commutative monoidal operation \(\&\) with unit \(\mathrm{k}\), such that \(\&\) distributes over arbitrary joins in each variable. Consequently, for every \(a\in Q\), the unary operation \(a\&(-)\colon Q\to Q\) admits a right adjoint \(a\ra(-)\colon Q\to Q\), called the \emph{residuum} of \(a\), which satisfies the standard adjoint equivalence
\begin{equation*}
a\& b\leq c \iff b\leq a\ra c \qquad (a,b,c\in Q).
\end{equation*}

A commutative unital quantale \(Q\) is \emph{integral} if its top element \(\top\) coincides with the monoidal unit \(\mathrm{k}\). Further, \(Q\) is a \emph{Girard quantale} whenever there exists an element \(\mathrm{d}\in Q\), termed the \emph{dualising element}, such that
\begin{equation*}
q=(q\ra \mathrm{d})\ra \mathrm{d}
\end{equation*}
holds for all \(q\in Q\). Throughout this paper, \(Q\) always denotes a fixed commutative unital quantale.

A \emph{\(Q\)-relation} \(\alpha\colon X\nto Y\) between two sets \(X\) and \(Y\) is a set-valued map \(\alpha\colon X\times Y\to Q\). The \emph{identity \(Q\)-relation} on a set \(X\) is defined pointwise by
\begin{equation*}
\delta_X(x,y)=
\begin{cases}
\mathrm{k}, & x=y,\\
\bot, & x\neq y.
\end{cases}
\end{equation*}

Any \(Q\)-relation of the form \(\alpha\colon X\nto\sigt\) or \(\alpha\colon\sigt\nto X\) can be canonically identified with a map \(\alpha\colon X\to Q\) via the set isomorphism \(X\times\sigt\cong X\). We therefore write \(\alpha(x)\) in place of \(\alpha(x,\star)\) or \(\alpha(\star,x)\) for simplicity.

The \emph{composition} of two composable \(Q\)-relations \(\alpha\colon X\nto Y\) and \(\beta\colon Y\nto Z\) is defined as
\begin{equation*}
(\beta\circ\alpha)(x,z)=\bigvee_{y\in Y}\beta(y,z)\&\alpha(x,y).
\end{equation*}

Since the quantale multiplication \(\&\) preserves arbitrary joins in each argument, relational composition is join-preserving in both variables:
\begin{equation*}
\beta\circ\Big(\bigvee_{i\in I}\alpha_i\Big)=\bigvee_{i\in I}\beta\circ\alpha_i,
\qquad
\Big(\bigvee_{i\in I}\beta_i\Big)\circ\alpha=\bigvee_{i\in I}\beta_i\circ\alpha.
\end{equation*}

Accordingly, the collection of all sets and \(Q\)-relations forms a quantaloid, that is, a $\Sup$-enriched category \cite{Rosenthal1996}, denoted \(\QRel\), whose hom-sets \(\QRel(X,Y)\) are ordered pointwise by the lattice order of \(Q\).

As a quantaloid, \(\QRel\) is equipped with internal hom operations \(\swarrow\) and \(\searrow\), which serve respectively as the right adjoints of post-composition and pre-composition:
\begin{equation*}
(-\circ\alpha)\dashv(-\swarrow\alpha),\qquad (\beta\circ-)\dashv(\beta\searrow-).
\end{equation*}

These adjunctions yield the following equivalent characterizations: for all \(\alpha\colon X\nto Y\), \(\beta\colon Y\nto Z\), \(\gamma\colon X\nto Z\),
\begin{equation*}
\beta\leq\gamma\swarrow\alpha \iff \beta\circ\alpha\leq\gamma \iff \alpha\leq\beta\searrow\gamma,
\end{equation*}
with explicit pointwise formulas
\begin{equation*}
(\gamma\swarrow\alpha)(y,z)=\bigwedge_{x\in X}\alpha(x,y)\ra\gamma(x,z),
\qquad
(\beta\searrow\gamma)(x,y)=\bigwedge_{z\in Z}\beta(y,z)\ra\gamma(x,z).
\end{equation*}

For any \(Q\)-relation \(\alpha\colon X\nto Y\), its \emph{opposite relation} \(\alpha^\op\colon Y\nto X\) is defined by \(\alpha^\op(y,x)=\alpha(x,y)\). Opposite relations satisfy the following natural identities with respect to composition and internal homs:
\begin{equation*}
(\beta\circ\alpha)^\op=\alpha^\op\circ\beta^\op,\quad
(\beta\searrow\gamma)^\op=\gamma^\op\swarrow\beta^\op,\quad
(\gamma\swarrow\alpha)^\op=\alpha^\op\searrow\gamma^\op.
\end{equation*}

\begin{proposition} \label{QRel self-dual}
The assignment \(\alpha\mapsto \alpha^\op\) defines an involution on \(\QRel\), so that the quantaloid \(\QRel\) is self-dual.
\end{proposition}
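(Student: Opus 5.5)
We will exhibit $(-)^\op$ as an identity-on-objects, contravariant, order-preserving functor $\QRel\to\QRel$ that squares to the identity. Equivalently, it is an isomorphism of quantaloids $\QRel^\op\cong\QRel$. All verifications are pointwise computations in $Q$, so the plan is to check, in order, the data of a $\Sup$-enriched contravariant functor and then the involution law.

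First, on objects we set $X\mapsto X$. On hom-sets we take the map $\QRel(X,Y)\to\QRel(Y,X)$, $\alpha\mapsto\alpha^\op$, which is a bijection. Its inverse is again $(-)^\op$, since $(\alpha^\op)^\op(x,y)=\alpha^\op(y,x)=\alpha(x,y)$. This gives $(\alpha^\op)^\op=\alpha$ immediately. Next, the map is monotone and preserves arbitrary joins, because both order and joins in $\QRel$ are computed pointwise: $(\bigvee_i\alpha_i)^\op(y,x)=\bigvee_i\alpha_i(x,y)=(\bigvee_i\alpha_i^\op)(y,x)$. Hence each hom-map is an isomorphism in $\Sup$. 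Identities are preserved because $\delta_X$ is symmetric, so $\delta_X^\op=\delta_X$.

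The one step that uses a hypothesis on $Q$ is contravariance of composition, $(\beta\circ\alpha)^\op=\alpha^\op\circ\beta^\op$. This identity was already recorded above, but we would justify it directly:
\[(\alpha^\op\circ\beta^\op)(z,x)=\bigvee_{y\in Y}\alpha^\op(y,x)\&\beta^\op(z,y)=\bigvee_{y\in Y}\alpha(x,y)\&\beta(y,z),\]
and this equals $(\beta\circ\alpha)(x,z)$ precisely because $\&$ is commutative. The computation is routine, but commutativity of $Q$ is essential here; for a noncommutative quantale the formula fails. With this, $(-)^\op$ is a quantaloid isomorphism $\QRel^\op\to\QRel$, and it is its own inverse, i.e.\ an involution.

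Finally, for completeness we would note that the stated identities for $\swarrow$ and $\searrow$ follow formally, without separate computation. Any isomorphism $\QRel^\op\cong\QRel$ sends right adjoints of precomposition to right adjoints of postcomposition. Concretely, $\beta\le\gamma\swarrow\alpha\iff\beta\circ\alpha\le\gamma\iff\alpha^\op\circ\beta^\op\le\gamma^\op\iff\beta^\op\le\alpha^\op\searrow\gamma^\op$, and the Yoneda-type uniqueness of adjoints gives $(\gamma\swarrow\alpha)^\op=\alpha^\op\searrow\gamma^\op$. This confirms that self-duality is compatible with the full closed structure of the quantaloid.
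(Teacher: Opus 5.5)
Your proposal is correct and matches the paper's approach. The paper gives no separate proof and treats the proposition as immediate from the pointwise identities recorded just before it, chiefly $(\beta\circ\alpha)^\op=\alpha^\op\circ\beta^\op$, which, as you rightly stress, uses commutativity of $\&$; your checks of joins, identities, involutivity, and your adjunction-based derivation of $(\gamma\swarrow\alpha)^\op=\alpha^\op\searrow\gamma^\op$ make explicit what the paper leaves to the reader.
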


For each scalar \(q\in Q\) and each \(Q\)-relation \(\alpha\colon X\nto Y\), we define the \emph{scalar multiplication} and \emph{scalar implication} operations by
\begin{equation*}
q\&\alpha\colon X\nto Y,\quad (q\&\alpha)(x,y)=q\&\alpha(x,y),
\end{equation*}
\begin{equation*}
q\ra\alpha\colon X\nto Y,\quad (q\ra\alpha)(x,y)=q\ra\alpha(x,y).
\end{equation*}

The basic compatibility properties of these operations with relational composition and internal homs are summarized below.

\begin{proposition} \label{prop:scalar}
Let \(\alpha\colon X\nto Y\), \(\beta\colon Y\nto Z\), and \(\gamma\colon X\nto Z\) be arbitrary \(Q\)-relations. For all \(q\in Q\), the following identities hold:
\begin{enumerate}[label={\rm(\arabic*)}]
\item \((q\&\beta)\circ\alpha=\beta\circ(q\&\alpha)=q\&(\beta\circ\alpha)\);
\item \((q\&\beta)\searrow\gamma=q\ra(\beta\searrow\gamma)=\beta\searrow(q\ra\gamma)\);
\item \(\gamma\swarrow(q\&\alpha)=q\ra(\gamma\swarrow\alpha)=(q\ra\gamma)\swarrow\alpha\).
\end{enumerate}
\end{proposition}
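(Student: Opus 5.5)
The plan is to verify all three items pointwise, using only two facts about the quantale: $\&$ is commutative and associative and distributes over arbitrary joins, and the residuation $a\&b\le c\iff b\le a\ra c$ holds. From residuation we extract two standard identities that drive items (2) and (3):
\[
(q\&a)\ra c=q\ra(a\ra c)=a\ra(q\ra c),\qquad q\ra\bigwedge_i c_i=\bigwedge_i (q\ra c_i).
\]
For the first, an element $b$ lies below $(q\&a)\ra c$ iff $q\&a\&b\le c$. Commutativity and associativity rewrite this condition in the two other ways, and each of those corresponds to one of the other two expressions. For the second, $b\le q\ra\bigwedge_i c_i$ iff $q\&b\le c_i$ for all $i$.

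For (1), we expand the composition formula. We have $((q\&\beta)\circ\alpha)(x,z)=\bigvee_y q\&\beta(y,z)\&\alpha(x,y)$. Associativity and commutativity of $\&$ let $q$ be moved onto the $\alpha$ factor, which gives $(\beta\circ(q\&\alpha))(x,z)$. Distributivity of $q\&(-)$ over the join pulls $q$ outside, which gives $(q\&(\beta\circ\alpha))(x,z)$.

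For (2), we use the explicit formula for $\searrow$:
\[
((q\&\beta)\searrow\gamma)(x,y)=\bigwedge_z (q\&\beta(y,z))\ra\gamma(x,z).
\]
By the first identity, each term equals $q\ra(\beta(y,z)\ra\gamma(x,z))$. Since $q\ra(-)$ preserves meets, the whole meet is $q\ra(\beta\searrow\gamma)(x,y)$. Applying the other form of the first identity instead, each term equals $\beta(y,z)\ra(q\ra\gamma(x,z))$, and the meet is then $(\beta\searrow(q\ra\gamma))(x,y)$.

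Item (3) can be computed in the same way from the formula for $\swarrow$. It is quicker to deduce it from (2) by applying $(-)^\op$. We use $(\gamma\swarrow\alpha)^\op=\alpha^\op\searrow\gamma^\op$, together with the evident facts $(q\&\alpha)^\op=q\&\alpha^\op$ and $(q\ra\gamma)^\op=q\ra\gamma^\op$. This is the same self-duality recorded in Proposition~\ref{QRel self-dual}.

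There is no real obstacle here. The only step needing care is justifying the first identity via residuation, since commutativity is used essentially when $q$ is moved past $\beta(y,z)$. An alternative route avoids pointwise computation entirely: one can prove (2) and (3) by a Yoneda-style argument from (1) and the defining adjunctions. For instance, $\alpha'\le(q\&\beta)\searrow\gamma$ iff $(q\&\beta)\circ\alpha'\le\gamma$ iff $q\&(\beta\circ\alpha')\le\gamma$ iff $\beta\circ\alpha'\le q\ra\gamma$ iff $\alpha'\le\beta\searrow(q\ra\gamma)$. The third step here uses scalar residuation pointwise.
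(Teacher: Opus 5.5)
Your proposal is correct. The pointwise computations for (1) and (2), the residuation identities $(q\&a)\ra c=q\ra(a\ra c)=a\ra(q\ra c)$ and $q\ra\bigwedge_i c_i=\bigwedge_i(q\ra c_i)$, the reduction of (3) to (2) via $(\gamma\swarrow\alpha)^\op=\alpha^\op\searrow\gamma^\op$, and your alternative argument through the adjunction $(\beta\circ-)\dashv(\beta\searrow-)$ all check out. The paper gives no proof of this proposition and treats it as a routine verification, so there is nothing to compare against; your argument is the standard one that would fill that gap.
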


The categories enriched over a quantale $Q$ are a special case of quantaloid-enriched categories \cite{Stubbe2005a,Stubbe2005b}.  
A \emph{\(Q\)-category} is a pair \(\mathbb{A}=(\mathbb{A}_0,\mathbb{A})\) consisting of a set \(\mathbb{A}_0\) of objects and a \(Q\)-relation \(\mathbb{A}\colon\mathbb{A}_0\nto\mathbb{A}_0\), called the hom functor of \(\mathbb{A}\), such that \(\delta_{\mathbb{A}_0}\leq\mathbb{A}\) and \(\mathbb{A}\circ\mathbb{A}\leq\mathbb{A}\). We write \(\mathbb{A}\) for the entire \(Q\)-category by standard convention.

The underlying preorder on \(\mathbb{A}\) is given by \(x\leq y\iff \mathrm{k}\leq\mathbb{A}(x,y)\). A \(Q\)-category \(\mathbb{A}\) is \emph{skeletal} if its underlying preorder is a partial order, i.e.,
\begin{equation*}
\mathrm{k}\leq\mathbb{A}(x,y)\text{ and }\mathrm{k}\leq\mathbb{A}(y,x)\iff x=y
\end{equation*}
for all \(x,y\in\mathbb{A}_0\).

A \emph{\(Q\)-functor} \(F\colon\mathbb{A}\to\mathbb{B}\) between \(Q\)-categories is an object map \(F\colon\mathbb{A}_0\to\mathbb{B}_0\) satisfying
\begin{equation*}
\mathbb{A}(x,y)\leq\mathbb{B}(Fx,Fy)
\end{equation*}
for all \(x,y\in\mathbb{A}_0\). All \(Q\)-categories and \(Q\)-functors constitute the locally ordered category \(Q\text{-}\mathbf{Cat}\).

A \emph{distributor} \(f\colon\mathbb{A}\oto\mathbb{B}\) is a \(Q\)-relation \(f\colon\mathbb{A}_0\nto\mathbb{B}_0\) satisfying the invariance condition \(\mathbb{B}\circ f\circ\mathbb{A}=f\). Distributors are closed under relational composition, forming the locally ordered category \(Q\text{-}\mathbf{Dist}\).

Every \(Q\)-functor \(F\colon\mathbb{A}\to\mathbb{B}\) induces two canonical distributors:
\begin{equation*}
F_{\natural}\colon\mathbb{A}\oto\mathbb{B},\quad F_{\natural}(x,y)=\mathbb{B}(Fx,y),
\qquad
F^{\natural}\colon\mathbb{B}\oto\mathbb{A},\quad F^{\natural}(y,x)=\mathbb{B}(y,Fx),
\end{equation*}
known as the graph and cograph of \(F\), respectively. A pair of \(Q\)-functors \(F\dashv G\) is \emph{adjoint} if \(F_{\natural}=G^{\natural}\), equivalently,
\begin{equation*}
\mathbb{B}(Fx,y)=\mathbb{A}(x,Gy)
\end{equation*}
for all admissible objects.

A set \(X\) equipped with its identity \(Q\)-relation \(\delta_X\) forms a \emph{discrete \(Q\)-category}. We write \(\star_\mathrm{k}\) for the discrete \(Q\)-category on a singleton set.

Distributors from \(\mathbb{A}\) to \(\star_\mathrm{k}\) are \emph{presheaves} on \(\mathbb{A}\), and they assemble into a \(Q\)-category \(\mathcal{P}\mathbb{A}\) with homs
\begin{equation*}
\mathcal{P}\mathbb{A}(\lambda,\mu)=\mu\swarrow\lambda
=\bigwedge_{x\in\mathbb{A}_0}\lambda(x)\ra\mu(x).
\end{equation*}

Dually, distributors from \(\star_\mathrm{k}\) to \(\mathbb{A}\) are \emph{copresheaves}, forming the \(Q\)-category \(\mathcal{P}^{\dagger}\mathbb{A}\) with
\begin{equation*}
\mathcal{P}^{\dagger}\mathbb{A}(\rho,\sigma)=\sigma\searrow\rho
=\bigwedge_{x\in\mathbb{A}_0}\sigma(x)\ra\rho(x).
\end{equation*}

The underlying order of \(\mathcal{P}\mathbb{A}\) coincides with the pointwise order of distributors, while that of \(\mathcal{P}^{\dagger}\mathbb{A}\) is dual:
\begin{equation*}
\rho\leq\sigma\text{ in } \mathcal{P}^{\dagger}\mathbb{A} \iff \sigma\leq\rho\text{ in } Q\text{-}\mathbf{Dist}(\star_\mathrm{k},\mathbb{A}).
\end{equation*}

The \emph{Yoneda embedding} and \emph{co-Yoneda embedding} are canonical \(Q\)-functors
\begin{equation*}
Y_{\mathbb{A}}\colon\mathbb{A}\to\mathcal{P}\mathbb{A},\quad Y_{\mathbb{A}}(x)=\mathbb{A}(-,x),
\qquad
Y_{\mathbb{A}}^\dagger\colon\mathbb{A}\to\mathcal{P}^\dagger\mathbb{A},\quad Y_{\mathbb{A}}^\dagger(x)=\mathbb{A}(x,-).
\end{equation*}

\begin{lemma}[Yoneda] \label{lem:yoneda}
For any \(Q\)-category \(\mathbb{A}\), all objects \(x\in\mathbb{A}_0\), presheaves \(\lambda\in\mathcal{P}\mathbb{A}\), and copresheaves \(\rho\in\mathcal{P}^{\dagger}\mathbb{A}\),
\begin{equation*}
\lambda(x)=\mathcal{P}\mathbb{A}(Y_{\mathbb{A}}x,\lambda),\qquad
\rho(x)=\mathcal{P}^{\dagger}\mathbb{A}(\rho,Y_{\mathbb{A}}^\dagger x).
\end{equation*}
\end{lemma}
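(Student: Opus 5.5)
We prove the presheaf identity first and then obtain the copresheaf identity by duality.

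Unfolding the hom of \(\mathcal{P}\mathbb{A}\), the claim becomes
\[
\mathcal{P}\mathbb{A}(Y_{\mathbb{A}}x,\lambda)=\bigwedge_{y\in\mathbb{A}_0}\mathbb{A}(y,x)\ra\lambda(y).
\]
We show this equals \(\lambda(x)\) by proving two inequalities.

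For ``\(\leq\)'', take \(y=x\) in the meet. Since \(\delta_{\mathbb{A}_0}\leq\mathbb{A}\), we have \(\mathrm{k}\leq\mathbb{A}(x,x)\). Antitonicity of \(\ra\) in its first variable then gives \(\mathbb{A}(x,x)\ra\lambda(x)\leq \mathrm{k}\ra\lambda(x)=\lambda(x)\).

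For ``\(\geq\)'', it suffices to show \(\lambda(x)\leq \mathbb{A}(y,x)\ra\lambda(y)\) for each \(y\). By the adjunction \(a\&b\leq c\iff b\leq a\ra c\), this is equivalent to \(\mathbb{A}(y,x)\&\lambda(x)\leq\lambda(y)\). This is exactly the distributor condition on \(\lambda\colon\mathbb{A}\oto\star_\mathrm{k}\): the invariance \(\star_\mathrm{k}\circ\lambda\circ\mathbb{A}=\lambda\) gives \(\lambda\circ\mathbb{A}\leq\lambda\), whose pointwise form is
\[
\bigvee_{z}\lambda(z)\&\mathbb{A}(y,z)\leq\lambda(y).
\]
We use commutativity of \(\&\) to match the order of the factors. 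We also note that composing with the identity relation on the singleton does nothing.

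The copresheaf identity \(\rho(x)=\mathcal{P}^\dagger\mathbb{A}(\rho,Y^\dagger_{\mathbb{A}}x)\) is obtained the same way. Here the hom unfolds to \(\bigwedge_y\rho(y)\ra\mathbb{A}(x,y)\), and the two inequalities come from \(\mathrm{k}\leq\mathbb{A}(x,x)\) and from \(\mathbb{A}\circ\rho\leq\rho\). Alternatively, it follows by applying the involution of Proposition~\ref{QRel self-dual} to \(\mathbb{A}^{\op}\).

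The only point needing care is the bookkeeping of which side of the composition carries \(\mathbb{A}\), and the argument order of \(\mathbb{A}(y,x)\) in the representable \(Y_{\mathbb{A}}x=\mathbb{A}(-,x)\). Checking the copresheaf formula as written in the paper requires the same care. Otherwise the proof is routine.
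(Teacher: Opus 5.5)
Your presheaf half is correct and is the standard argument. The paper only recalls this lemma as a preliminary and gives no proof. The hom unfolds to $\bigwedge_{y}\mathbb{A}(y,x)\ra\lambda(y)$. The instance $y=x$ together with $\mathrm{k}\leq\mathbb{A}(x,x)$ gives ``$\leq$''. Residuation plus the pointwise form of $\lambda\circ\mathbb{A}\leq\lambda$ gives ``$\geq$''.

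The copresheaf half, however, contains exactly the bookkeeping slip you warned about. By the paper's definition, $\mathcal{P}^\dagger\mathbb{A}(\rho,\sigma)=\sigma\searrow\rho=\bigwedge_{y}\sigma(y)\ra\rho(y)$. So with $\sigma=Y^\dagger_{\mathbb{A}}x=\mathbb{A}(x,-)$,
\[
\mathcal{P}^\dagger\mathbb{A}(\rho,Y^\dagger_{\mathbb{A}}x)=\bigwedge_{y\in\mathbb{A}_0}\mathbb{A}(x,y)\ra\rho(y),
\]
not $\bigwedge_y\rho(y)\ra\mathbb{A}(x,y)$. The expression you wrote is $\mathcal{P}^\dagger\mathbb{A}(Y^\dagger_{\mathbb{A}}x,\rho)$, and it is not $\rho(x)$ in general. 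For $\mathbb{A}=\star_\mathrm{k}$ and $\rho=\bot$ it equals $\bot\ra\mathrm{k}=\top$.

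With your formula the argument breaks in both directions:
\begin{itemize}
\item the instance $y=x$ gives $\rho(x)\ra\mathbb{A}(x,x)$, which is not below $\rho(x)$;
\item ``$\geq$'' would need $\rho(y)\mathbin{\&}\rho(x)\leq\mathbb{A}(x,y)$, which nothing supports.
\end{itemize}

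With the correct unfolding, the two ingredients you name do exactly the job. The instance $y=x$ and $\mathrm{k}\leq\mathbb{A}(x,x)$ give ``$\leq$''. For ``$\geq$'', $\rho(x)\leq\mathbb{A}(x,y)\ra\rho(y)$ is equivalent to $\mathbb{A}(x,y)\mathbin{\&}\rho(x)\leq\rho(y)$. This is the instance $z=x$ of $\bigvee_{z}\mathbb{A}(z,y)\mathbin{\&}\rho(z)\leq\rho(y)$, i.e.\ of $\mathbb{A}\circ\rho\leq\rho$.

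Your alternative duality route is sound and avoids the slip. From $(\beta\searrow\gamma)^\op=\gamma^\op\swarrow\beta^\op$ one gets $\mathcal{P}^\dagger\mathbb{A}(\rho,\sigma)=\mathcal{P}(\mathbb{A}^\op)(\sigma^\op,\rho^\op)$. Also $(Y^\dagger_{\mathbb{A}}x)^\op=\mathbb{A}^\op(-,x)=Y_{\mathbb{A}^\op}x$. So the presheaf case applied to $\mathbb{A}^\op$ yields $\rho(x)=\mathcal{P}^\dagger\mathbb{A}(\rho,Y^\dagger_{\mathbb{A}}x)$. The swap of arguments in this identification is exactly what your direct unfolding missed.
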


A \(Q\)-category \(\mathbb{A}\) is \emph{cocomplete} \cite{Stubbe2005a} if its Yoneda embedding admits a left adjoint 
\[S_{\mathbb{A}}\dashv Y_{\mathbb{A}}\colon\bbA\to\CP\bbA.\]
 The left adjoint \(S_{\mathbb{A}}\) computes weighted colimits in \(\mathbb{A}\), and satisfies
\begin{equation*}
\mathbb{A}(S_{\mathbb{A}}\lambda,y)=\mathcal{P}\mathbb{A}(\lambda,Y_{\mathbb{A}}(y))
=\bigwedge_{z\in\mathbb{A}_0}\big(\lambda(z)\ra\mathbb{A}(z,y)\big).
\end{equation*}
We refer to \(S_{\mathbb{A}}\lambda\) as the \emph{supremum} of the presheaf \(\lambda\). 
A \(Q\)-category is cocomplete if and only if its opposite category is cocomplete. The cocompleteness of \(\mathbb{A}^\mathrm{op}\) is equivalent to the co-Yoneda embedding having a right adjoint.

The category \(\QSup\) is defined to consist of all skeletal cocomplete \(Q\)-categories and all left adjoint \(Q\)-functors between them. For each left adjoint morphism \(f\colon\mathbb{A}\to\mathbb{B}\) in \(\QSup\), we denote its right adjoint by \(f^\natural\colon\mathbb{B}\to\mathbb{A}\), so that \(f^\natural\dashv f\colon\mathbb{A}^\mathrm{op}\to\mathbb{B}^\mathrm{op}\).

\begin{proposition}
The correspondence \((\mathbb{A}\xrightarrow{f}\mathbb{B})\mapsto(\mathbb{B}^\mathrm{op}\xrightarrow{f^\natural}\mathbb{A}^\mathrm{op})\) yields an isomorphism between \(\QSup\) and its opposite category \(\QSup^\mathrm{op}\). Consequently, \(\QSup\) is self-dual.
\end{proposition}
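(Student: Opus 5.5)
We need to show that sending $f\colon\mathbb{A}\to\mathbb{B}$ to $f^\natural\colon\mathbb{B}^\op\to\mathbb{A}^\op$ is a well-defined functor $\QSup^\op\to\QSup$ (or equivalently $\QSup\to\QSup^\op$) that is its own inverse up to the identification $(\mathbb{A}^\op)^\op=\mathbb{A}$.

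\emph{Objects.} If $\mathbb{A}$ is skeletal and cocomplete, then $\mathbb{A}^\op$ is cocomplete, as recalled above, since cocompleteness is self-dual. It is also skeletal, because its underlying order is the reverse of that of $\mathbb{A}$, and antisymmetry is symmetric under reversal. Hence $\mathbb{A}\mapsto\mathbb{A}^\op$ is an involution on the objects of $\QSup$.

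\emph{Morphisms.} Let $f\dashv f^\natural$ with $\mathbb{B}(fx,y)=\mathbb{A}(x,f^\natural y)$. Rewriting in the opposite categories, where $\mathbb{A}^\op(u,v)=\mathbb{A}(v,u)$, gives
\[
\mathbb{A}^\op(f^\natural y,x)=\mathbb{B}^\op(y,fx).
\]
So $f^\natural\colon\mathbb{B}^\op\to\mathbb{A}^\op$ is left adjoint to $f\colon\mathbb{A}^\op\to\mathbb{B}^\op$, and in particular is a morphism of $\QSup$. This also requires that $f^\natural$ and $f$ are $Q$-functors between the opposites, which is immediate because the $Q$-functor inequality is symmetric under reversing both homs.

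Right adjoints between skeletal $Q$-categories are unique: if $G,G'$ are both right adjoint to $f$, then $\mathbb{A}(x,Gy)=\mathbb{A}(x,G'y)$ for all $x$, and the Yoneda lemma (Lemma \ref{lem:yoneda}) together with skeletality gives $G=G'$. Therefore $f^\natural$ is well defined, and applying the construction twice recovers $f$, since $f$ is the (unique) right adjoint of $f^\natural$ in the opposite categories.

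\emph{Functoriality.} We need $(g\circ f)^\natural=f^\natural\circ g^\natural$ and $(1_\mathbb{A})^\natural=1_{\mathbb{A}^\op}$. Both follow from uniqueness of adjoints and the composite adjunction
\[
\mathbb{C}(gfx,z)=\mathbb{B}(fx,g^\natural z)=\mathbb{A}(x,f^\natural g^\natural z).
\]
The assignment is thus a contravariant functor that is its own inverse, so it is an isomorphism $\QSup\cong\QSup^\op$.

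The only genuinely non-formal point is the uniqueness of right adjoints, which is exactly where skeletality is used.
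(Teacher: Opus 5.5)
Your proof is correct. The paper states this proposition without proof, recording only that $f^\natural\dashv f\colon\mathbb{A}^\op\to\mathbb{B}^\op$. Your argument is the standard verification that remark points to:
\begin{itemize}
\item $\mathbb{A}^\op$ is again skeletal and cocomplete;
\item the adjunction $f\dashv f^\natural$ reverses to $f^\natural\dashv f$ between the opposite categories;
\item skeletality makes adjoints unique, so $f\mapsto f^\natural$ is well defined, strictly functorial via the composite adjunction, and involutive.
\end{itemize}
The uniqueness step does not need the Yoneda lemma. Taking $x=Gy$ and then $x=G'y$ in $\mathbb{A}(x,Gy)=\mathbb{A}(x,G'y)$ gives $\rmk\leq\mathbb{A}(Gy,G'y)$ and $\rmk\leq\mathbb{A}(G'y,Gy)$ directly.
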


Skeletal cocomplete \(Q\)-categories are equivalently characterized by \(Q\)-modules \cite{JoyalTierney1984,Stubbe2007a}. A \emph{\(Q\)-module} is a complete lattice \(M\) equipped with a scalar action \(Q\times M\to M\), written \(q\cdot x\), satisfying the following conditions:
\begin{enumerate}[label=(\arabic*)]
\item (unitality) $\rmk\cdot x=x$, 
\item (associativity) $q\cdot(p\cdot x)=(q\& p)\cdot x$, 
\item $\bigvee_{i\in I}( q_i\cdot x)=\Big(\bigvee_{i\in I}q_i\Big)\cdot x$, $\bigvee_{i\in I} (q\cdot x_i)=q\&\Big(\bigvee_{i\in I}x_i\Big)$.
 \end{enumerate}
 Each scalar action map \(q\cdot(-)\) on \(M\) has a right adjoint \((-)^q\), which induces a dual \(Q\)-module structure on \(M^\mathrm{op}\).

Every skeletal cocomplete \(Q\)-category \(\mathbb{A}\) admits a canonical \(Q\)-module structure on its underlying complete lattice, with action \(q\cdot x=S_{\mathbb{A}}(q\,\&\,Y_{\mathbb{A}}x)\). This yields the explicit supremum formula
\begin{equation*}
S_{\mathbb{A}}\lambda=\bigvee_{x\in\mathbb{A}_0}\lambda(x)\cdot x
\end{equation*}
for all presheaves \(\lambda\) on \(\mathbb{A}\). Conversely, any \(Q\)-module on a complete lattice $(A,\leq)$ induces a cocomplete \(Q\)-category structure
$$\bbA(x,y)=\bigvee\{q\in Q\mid q\cdot x\leq y\}.$$

\begin{example} \label{ex:discrete}
For any set \(X\), all maps \(\lambda\colon X\to Q\) serve simultaneously as presheaves and copresheaves on the discrete \(Q\)-category \(\delta_X\). The associated cocomplete \(Q\)-categories \(\mathcal{P}\delta_X\) and \(\mathcal{P}^\dagger\delta_X\) share the same underlying set \(Q^X\) but carry dual hom structures and orderings.  For each scalar $q\in Q$ and a presheaf $\lambda\in\CP\delta_X$, $q\cdot \lambda=q\&\lambda$, while for a copresheaf $\sigma\in\CP^\dagger\delta_X$, $q\cdot\sigma=q\ra\sigma$. 

In particular, \(\mathcal{P}\star_\mathrm{k}\) is a cocomplete \(Q\)-category with underlying set \(Q\), denoted \(\bbQ\), and \(\mathcal{P}^\dagger\star_\mathrm{k}=\bbQ^\mathrm{op}\).
\end{example}

\begin{proposition}[{\cite{Stubbe2007a}}] \label{prop:adjoint-char}
Let \(F\colon\mathbb{A}\to\mathbb{B}\) be a \(Q\)-functor between skeletal cocomplete \(Q\)-categories.
\begin{enumerate}[label={\rm(\arabic*)}]
\item \(F\) is a left adjoint if and only if it preserves scalar actions \(F(q\cdot x)=q\cdot F(x)\) and arbitrary joins.
\item \(F\) is a right adjoint if and only if it preserves dual scalar actions \(F(x^q)=F(x)^q\) and arbitrary meets.
\end{enumerate}
\end{proposition}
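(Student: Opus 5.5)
The plan is to reduce both statements to the correspondence between skeletal cocomplete $Q$-categories and $Q$-modules described above. Under this correspondence, $\bbA(x,y)=\bigvee\{q\mid q\cdot x\le y\}$, so that
\[
q\le \bbA(x,y)\iff q\cdot x\le y .
\]
We also use the supremum formula $S_\bbA\lambda=\bigvee_x\lambda(x)\cdot x$. Part (2) will follow from part (1) by duality: a right adjoint $\bbA\to\bbB$ is the same thing as a left adjoint $\bbA^\op\to\bbB^\op$. The cocomplete $Q$-category $\bbA^\op$ has underlying lattice the order dual of that of $\bbA$, and its scalar action is $x\mapsto x^q$, the right adjoint of $q\cdot(-)$. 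Thus joins and actions in $\bbA^\op$ are meets and dual actions in $\bbA$, and (2) is (1) applied to $F\colon\bbA^\op\to\bbB^\op$. So the work lies in (1).

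\emph{Necessity in (1).} Suppose $F\dashv G$, so that $\bbB(Fx,y)=\bbA(x,Gy)$. Taking $\rmk\le$ on both sides gives an order-theoretic Galois adjunction between the underlying posets, so $F$ preserves arbitrary joins. For the scalar action, fix $q$ and compute, for any $y$:
\[
q\cdot Fx\le y \iff q\le\bbB(Fx,y)=\bbA(x,Gy)\iff q\cdot x\le Gy\iff \rmk\le\bbA(q\cdot x,Gy)=\bbB(F(q\cdot x),y).
\]
The last condition says $F(q\cdot x)\le y$. Since $\bbB$ is skeletal, the Yoneda-type equality of down-sets gives $F(q\cdot x)=q\cdot Fx$.

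\emph{Sufficiency in (1).} Assume $F$ preserves joins and scalar actions. Define the candidate right adjoint by $Gy=\bigvee\{x\mid Fx\le y\}$; this is the usual right adjoint of a join-preserving map of complete lattices, so $Fx\le y\iff x\le Gy$. We must check that $\bbB(Fx,y)=\bbA(x,Gy)$ as elements of $Q$. For any $q$:
\[
q\le\bbB(Fx,y)\iff q\cdot Fx\le y\iff F(q\cdot x)\le y\iff q\cdot x\le Gy\iff q\le\bbA(x,Gy).
\]
Hence the two homs agree. It remains to see that $G$ is a $Q$-functor, i.e.\ $\bbB(y,y')\le\bbA(Gy,Gy')$. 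This follows from the adjunction formula:
\[
\bbB(y,y')\le\bbB(FGy,y')=\bbA(Gy,Gy'),
\]
where the inequality holds because $FGy\le y$ and $\bbB(-,y')$ is antitone.

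The main obstacle is the bookkeeping of the duality used for (2). One has to check that $(\bbA^\op)$ is again skeletal and cocomplete with exactly the dual action $(-)^q$ and the reversed order, so that "left adjoint $\bbA^\op\to\bbB^\op$" matches "right adjoint $\bbA\to\bbB$" under the paper's convention $f^\natural\dashv f$ on opposites. This uses the stated fact that $\bbA$ is cocomplete iff $\bbA^\op$ is, together with the module description of $M^\op$. If that identification proves awkward, the fallback is to repeat the argument of (1) directly with meets and $(-)^q$. The characterization $q\le\bbA(x,y)\iff x\le y^q$ plays the role of the key equivalence, and the left adjoint is $x\mapsto\bigwedge\{y\mid x\le Fy\}$.
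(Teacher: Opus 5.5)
Your proof is correct. The paper gives no argument of its own here; it only cites Stubbe's work. The standard proof in that source is colimit-theoretic. Between cocomplete $Q$-categories the left adjoints are exactly the functors preserving all weighted suprema. The decomposition $S_{\mathbb{A}}\lambda=\bigvee_{x}\lambda(x)\cdot x$ then reduces this to preserving tensors (the scalar action) and conical joins in the underlying lattice.

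You bypass weighted colimits altogether. You build the candidate right adjoint with the order-theoretic adjoint functor theorem on the underlying complete lattices. You then upgrade that poset adjunction to an enriched one using the single equivalence $q\le\mathbb{A}(x,y)\iff q\cdot x\le y$, and check separately that $G$ is a $Q$-functor. This route is shorter and self-contained, using only the module/category dictionary recalled in the preliminaries. The cited route fits into the general theory of weighted colimits and works for arbitrary quantaloid-enriched categories.

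For (2), your fallback is in fact all you need. The proof of (1) used only a complete underlying lattice and an operation satisfying the key equivalence. For $\mathbb{A}^{\op}$ you have directly $q\le\mathbb{A}^{\op}(x,y)=\mathbb{A}(y,x)\iff q\cdot y\le x\iff y\le x^{q}$. So the argument of (1) applies verbatim to the reversed order with $(-)^{q}$ in place of $q\cdot(-)$. You need not invoke cocompleteness of $\mathbb{A}^{\op}$ or identify its action through $S_{\mathbb{A}^{\op}}$ separately.
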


A morphism \(F\colon\mathbb{A}\times\mathbb{B}\to\mathbb{C}\) between skeletal cocomplete \(Q\)-categories is a \emph{\(\QSup\)-bimorphism} if it belongs to \(\QSup\) in each variable separately. The \emph{tensor product} \(\mathbb{A}\otimes\mathbb{B}\) is the universal codomain for such bimorphisms, endowing \(\QSup\) with a monoidal structure. The hom-category \(\QSup(\mathbb{A},\mathbb{B})\) is itself a skeletal cocomplete \(Q\)-category with pointwise hom formula and pointwise joins.

\begin{proposition}[{\cite{JoyalTierney1984}}] \label{prop:ccat-monoidal}
The category \(\QSup\) carries a symmetric monoidal closed structure with tensor product \(\otimes\) and unit \(\bbQ\). Specifically, for all skeletal cocomplete \(Q\)-categories \(\mathbb{A},\mathbb{B},\mathbb{C}\),
\begin{enumerate}[label={\rm(\arabic*)}]
\item \(\mathbb{A}\otimes\mathbb{B}\cong\mathbb{B}\otimes\mathbb{A}\);
\item \(\mathbb{A}\otimes\bbQ\cong\mathbb{A}\);
\item \(\QSup(\mathbb{A}\otimes\mathbb{B},\mathbb{C}) \cong \QSup(\mathbb{B},\QSup(\mathbb{A},\mathbb{C}))\);
\item \(\mathbb{A}\otimes\mathbb{B}\cong \QSup(\mathbb{A},\mathbb{B}^\mathrm{op})^\mathrm{op}\).
\end{enumerate}
\end{proposition}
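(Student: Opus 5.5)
The plan is to reduce everything to the $Q$-module description of skeletal cocomplete $Q$-categories and then mimic the classical Joyal--Tierney argument for $\Sup$. Following Proposition~\ref{prop:adjoint-char}, a morphism in $\QSup$ is a join-preserving map commuting with scalar actions. Consequently, $\QSup(\bbA,\bbB)$ is the set of such maps, ordered and joined pointwise, with action $(q\cdot f)(x)=q\cdot f(x)$. Each such $q\cdot f$ is again a module morphism because $\&$ is commutative: $q\cdot f(p\cdot x)=(q\& p)\cdot f(x)=p\cdot(q\cdot f(x))$. So the internal hom $[\bbA,\bbB]:=\QSup(\bbA,\bbB)$ is a $Q$-module, hence an object of $\QSup$.

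Next I will construct $\bbA\otimes\bbB$. I take it to be the $Q$-module $\QSup(\bbA,\bbB^{\op})^{\op}$, which already gives (4). I then prove directly that it classifies bimorphisms. The universal bimorphism $\bbA\times\bbB\to\QSup(\bbA,\bbB^\op)^\op$ sends $(a,b)$ to the map $a\otimes b\colon x\mapsto \bigvee\{\,c\in\bbB:\ \bbA(a,x)\text{-type condition}\,\}$. Concretely, $a\otimes b$ should be the right adjoint/dual of the "rank one" map $x\mapsto \bbA(x,a)\,\cdot\, b$ read in the opposite module. Equivalently, in $\QSup(\bbA,\bbB^\op)$ it is $x\mapsto b^{\bbA(x,a)}$, using the dual action $(-)^q$ on $\bbB^\op$.

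I will check three things about this universal bimorphism. First, it is a bimorphism. Second, every element of $\QSup(\bbA,\bbB^\op)^\op$ is a join, in the opposite order, of elements $a\otimes b$. Third, for a bimorphism $F\colon\bbA\times\bbB\to\bbC$, the induced map $\bar F(\phi)=\bigvee\{F(a,b): a\otimes b\leq\phi\}$ is the unique morphism in $\QSup$ extending $F$. Uniqueness is immediate from the generation statement. Well-definedness, that is $\bar F(a\otimes b)=F(a,b)$, is the real content. I expect to establish it via the Galois-type identity
\[
a\otimes b\leq \phi \iff \phi(a)\leq b \text{ in } \bbB^{\op}\text{-order},
\]
together with joins of bimorphisms being computed variablewise.

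For the remaining items, (1) follows because bimorphisms $\bbA\times\bbB\to\bbC$ and $\bbB\times\bbA\to\bbC$ correspond bijectively and naturally, so the universal objects are isomorphic. For (2), a bimorphism $\bbA\times\bbQ\to\bbC$ is determined by its restriction to $\bbA\times\{\rmk\}$, since $F(a,q)=q\cdot F(a,\rmk)$ and every $q\in\bbQ$ is $q\cdot\rmk$; thus $\bbA$ with $(a,q)\mapsto q\cdot a$ is universal. For (3), a bimorphism $F$ corresponds to the map $b\mapsto F(-,b)$. I verify that this is a morphism $\bbB\to\QSup(\bbA,\bbC)$ in $\QSup$, and that the correspondence is a bijection preserving pointwise order and action, hence an isomorphism of $Q$-categories. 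This uses commutativity of $\&$ again, so that scalar action in $b$ commutes with being a module map in $a$. Symmetry and associativity coherence then follow from the standard fact that a closed structure with representable bimorphisms is symmetric monoidal.

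The main obstacle is the universal property of the concrete model in (4). Specifically, I must show that $\bar F$ preserves arbitrary joins and scalar actions of $\QSup(\bbA,\bbB^\op)^\op$. My first attempt avoids this computation: I will express $\bar F$ as a composite of adjoints. Using (3) and self-duality of $\QSup$, $F$ corresponds to $\hat F\colon\bbB\to[\bbA,\bbC]$. Then $\bar F(\phi)$ is the supremum over the presheaf "$\phi$-weights". It is therefore the left adjoint to $c\mapsto$ the map $a\mapsto (\hat F^\natural(\text{dual of }c))$ built from right adjoints. Since left adjoints automatically preserve suprema and actions, $\bar F$ is a morphism in $\QSup$.
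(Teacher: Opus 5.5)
The paper gives no proof of this proposition; it is quoted from Joyal--Tierney. Your strategy is the standard one: realise $\bbA\otimes\bbB$ as $\QSup(\bbA,\bbB^\op)^\op$, verify the universal property there, and obtain (1)--(3) and coherence from bimorphisms and the closed structure. Once the universal property is in place, your arguments for (1), (2), (3) are fine.

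The genuine problem is the explicit universal bimorphism. Since $x\mapsto\bbA(x,a)$ and $q\mapsto b^q$ are both antitone, the map $x\mapsto b^{\bbA(x,a)}$ is \emph{monotone} from $A$ to $B$. An element $\phi\in\QSup(\bbA,\bbB^\op)$, by contrast, must satisfy $\phi(\bigvee_i x_i)=\bigwedge_i\phi(x_i)$ (so $\phi$ is antitone and $\phi(\bot)=\top_B$) and $\phi(q\cdot x)=\phi(x)^q$. For $Q=\mathbf{2}$ your map is $b$ on ${\downarrow}a$ and $\top$ elsewhere, which is not antitone. The rank-one map $x\mapsto\bbA(x,a)\cdot b$ that you also mention is antitone, but it is still not in the model:
it sends $\bot$ to $\top\cdot b$ rather than $\top_B$;
$q\mapsto q\cdot b$ preserves joins, not meets, so $\phi(\bigvee_i x_i)=\bigwedge_i\phi(x_i)$ fails in general;
$(q\ra p)\cdot b$ is only $\leq(p\cdot b)^q$, so $\phi(q\cdot x)=\phi(x)^q$ fails in general.
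So your first check (``it is a bimorphism'') fails as stated.

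The fix is to make the Galois identity the definition. Let $a\otimes b$ be the least $\phi$, in the pointwise order of $\bbB$, with $b\leq\phi(a)$. It exists because meets in $\QSup(\bbA,\bbB^\op)^\op$ are pointwise meets in $\bbB$. It is exactly the least element of the model above the rank-one map; classically it is $\top$ at $\bot$, $b$ on ${\downarrow}a\setminus\{\bot\}$, and $\bot$ elsewhere. Then $a\otimes b\leq\phi\iff b\leq\phi(a)$ holds by construction, and the bimorphism property and generation follow from it alone. For example, $(q\cdot a)\otimes b\leq\phi\iff q\cdot b\leq\phi(a)\iff a\otimes(q\cdot b)\leq\phi$. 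The same holds for $q\cdot(a\otimes b)$, because the dual action on the opposite model is the pointwise action $\phi\mapsto\phi(-)^q$ of $\QSup(\bbA,\bbB^\op)$. Generation is $\phi=\bigvee_a a\otimes\phi(a)$.

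The final step also needs to be made explicit. The phrase ``left adjoint to $c\mapsto(a\mapsto\hat F^\natural(\text{dual of }c))$'' hides the one place where the bimorphism hypothesis is really used. Put $G(c)(a):=F(a,-)^\natural(c)$, the largest $b$ with $F(a,b)\leq c$. From $F(\bigvee_i a_i,b)=\bigvee_i F(a_i,b)$ and the interchange $F(q\cdot a,b)=q\cdot F(a,b)=F(a,q\cdot b)$ you get $G(c)(\bigvee_i a_i)=\bigwedge_i G(c)(a_i)$ and $G(c)(q\cdot a)=G(c)(a)^q$, so $G(c)$ lies in the model. Moreover $G(\bigwedge_i c_i)=\bigwedge_i G(c_i)$ and $G(c^q)=G(c)(-)^q$, i.e.\ $G$ preserves the meets and the dual action of $\QSup(\bbA,\bbB^\op)^\op$. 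Hence $G$ has a left adjoint $L$, which preserves joins and, because $G$ commutes with the dual actions, also the actions; so $L$ is a morphism of $\QSup$. The Galois identity gives $L(a\otimes b)\leq c\iff b\leq G(c)(a)\iff F(a,b)\leq c$, so $L(a\otimes b)=F(a,b)$, and by generation $L=\bar F$. This settles well-definedness at the same time. Note that ``joins computed variablewise'' alone would not give it: what you need is exactly that $G(F(a_0,b_0))$ belongs to the model, and that rests on the interchange of actions between the two variables.
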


\begin{proposition} \label{prop:qrel-enriched}
The quantaloid \(\QRel\) of \(Q\)-relations is a \(\QSup\)-enriched category.
\end{proposition}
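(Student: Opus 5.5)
We have to show three things: each hom-set $\QRel(X,Y)$ is (the underlying set of) a skeletal cocomplete $Q$-category, composition $\QRel(Y,Z)\times\QRel(X,Y)\to\QRel(X,Z)$ is a $\QSup$-bimorphism, and identities are morphisms from the unit $\bbQ$.

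\textbf{Hom-objects.} Identify $\QRel(X,Y)$ with $Q^{X\times Y}$, i.e.\ with the presheaf category $\CP\delta_{X\times Y}$ of Example~\ref{ex:discrete}. Its hom is
\[
\QRel(X,Y)(\alpha,\alpha')=\bigwedge_{x,y}\alpha(x,y)\ra\alpha'(x,y).
\]
This is a skeletal cocomplete $Q$-category whose underlying order is the pointwise order, whose joins are pointwise, and whose scalar action is $q\cdot\alpha=q\&\alpha$. Equivalently, $Q^{X\times Y}$ is a $Q$-module under pointwise join and $q\&(-)$, and the induced hom $\bigvee\{q\mid q\&\alpha\le\alpha'\}$ equals the meet formula above.

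\textbf{Composition.} By Proposition~\ref{prop:adjoint-char}(1), a $Q$-functor between skeletal cocomplete $Q$-categories is a left adjoint precisely when it preserves scalar actions and arbitrary joins. For $-\circ\alpha$ and $\beta\circ-$:
\begin{itemize}
\item join preservation is the distributivity of composition over joins, already recorded in Section~\ref{sec:prelim};
\item scalar preservation is Proposition~\ref{prop:scalar}(1): $(q\&\beta)\circ\alpha=q\&(\beta\circ\alpha)=\beta\circ(q\&\alpha)$.
\end{itemize}

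\textbf{The functoriality check.} Proposition~\ref{prop:adjoint-char} presupposes that the map is already a $Q$-functor, so this must be verified, say for $\beta\circ-$. We need
\[
\bigwedge_{x,y}\big(\alpha(x,y)\ra\alpha'(x,y)\big)\le\bigwedge_{x,z}\big((\beta\circ\alpha)(x,z)\ra(\beta\circ\alpha')(x,z)\big).
\]
Write $p$ for the left side, so $p\&\alpha\le\alpha'$. Then $p\&(\beta\circ\alpha)=\beta\circ(p\&\alpha)\le\beta\circ\alpha'$ by monotonicity, which is exactly the inequality. The same argument handles $-\circ\alpha$. In general, any join- and scalar-preserving map between $Q$-modules is automatically a $Q$-functor for the induced homs: if $q\cdot x\le y$ then $q\cdot Fx=F(q\cdot x)\le Fy$.

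Hence composition is separately a left adjoint in each variable, i.e.\ a $\QSup$-bimorphism. By the universal property of $\otimes$ it therefore factors through
\[
\QRel(Y,Z)\otimes\QRel(X,Y)\to\QRel(X,Z).
\]

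\textbf{Identities and coherence.} The identity is the map $\bbQ\to\QRel(X,X)$, $q\mapsto q\&\delta_X$. It is a left adjoint because it preserves joins and $q\&(-)$. Associativity and unit laws hold at the level of underlying maps because $\QRel$ is a category. Morphisms in $\QSup$, including those out of $\otimes$, are determined by their underlying (bi)maps, so these laws lift to the enriched setting.

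The only step requiring care is the functoriality check above; the rest is routine. Alternatively, one can note directly that $\QRel$ is a quantaloid whose hom-sup-lattices carry compatible $Q$-actions, which is exactly $\QSup$-enrichment.
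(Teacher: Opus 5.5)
Your proof is correct and follows the same route as the paper: equip each hom-set $\QRel(X,Y)$ with the action $q\cdot\alpha=q\&\alpha$ and use Proposition~\ref{prop:scalar}(1) together with join-preservation of composition to see that composition is a left adjoint in each variable. You additionally spell out what the paper leaves implicit, namely the $Q$-functoriality of composition, the identity morphism $\bbQ\to\QRel(X,X)$, $q\mapsto q\&\delta_X$, and the lifting of the associativity and unit laws.
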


\begin{proof}
As a quantaloid, \(\QRel\) is naturally \(\mathbf{Sup}\)-enriched: each hom-set \(\QRel(X,Y)\) constitutes a complete lattice, and relational composition preserves arbitrary joins in both variables. To establish \(\QSup\)-enrichment, it suffices to equip every hom-set with a compatible \(Q\)-module action.

We endow each \(\QRel(X,Y)\) with the standard scalar multiplication action \(q\cdot\alpha = q\,\&\,\alpha\). For any composable \(Q\)-relations \(\alpha\colon X\nto Y\), \(\beta\colon Y\nto Z\) and any scalar \(q\in Q\), direct computation yields
\[
\beta\circ(q\cdot\alpha) =\beta\circ(q\,\&\,\alpha) = q\,\&\,\left(\beta\circ\alpha\right) = q\cdot\left(\beta\circ\alpha\right).
\]
The symmetric identity \((q\cdot\beta)\circ\alpha = q\cdot(\beta\circ\alpha)\) holds by identical reasoning.

Therefore, relational composition commutes with the scalar \(Q\)-module action on both sides, which verifies that \(\QRel\) is a \(\QSup\)-enriched category.
\end{proof}


\section{Back diagonals and formal concepts of $Q$-relations}\label{sec:backdiag}
We first recall fundamental definitions and properties of formal concept lattices associated with $Q$-relations from \cite{Belohlavek2002,Shen2016}. A \emph{$Q$-context} is a triple $(X,Y,\alpha)$, where $\alpha\colon X\nto Y$ is a $Q$-relation between sets $X$ and $Y$. A pair $(\lambda,\sigma)\in \QRel(X,\sigt)\times \QRel(\sigt,Y)$ is said to be a \emph{formal concept} of the $Q$-context $(X,Y,\alpha)$ if it satisfies the reciprocal adjunction conditions
\[
\lambda=\sigma\searrow\alpha,\qquad \sigma=\alpha\swarrow\lambda.
\]

For any fixed $Q$-relation $\alpha\colon X\nto Y$, there exists a canonical adjunction
\[
(-)\searrow\alpha\vdash\alpha\swarrow(-)\colon\mathcal{P}\delta_X\to\mathcal{P}^\dagger\delta_Y
\]
between the skeletal cocomplete $Q$-categories $\mathcal{P}\delta_X$ and $\mathcal{P}^\dagger\delta_Y$. Within this adjunction, the two components of each formal concept are mutually determined and dual to one another. Precisely, the first component $\lambda$ is a fixed point satisfying $(\alpha\swarrow\lambda)\searrow\alpha=\lambda$, while dually, the second component $\sigma$ satisfies the fixed-point identity $\alpha\swarrow(\sigma\searrow\alpha)=\sigma$.

For any two formal concepts $(\lambda,\sigma)$ and $(\mu,\rho)$ of $(X,Y,\alpha)$, the hom functor equality
\[
\mathcal{P}\delta_X(\lambda,\mu)=\mathcal{P}^\dagger\delta_Y(\sigma,\rho)
\]
holds, which is equivalent to the relational identity $\mu\swarrow\lambda=\rho\searrow\sigma$.

All formal concepts of a $Q$-context $(X,Y,\alpha)$ constitute a skeletal cocomplete $Q$-category $\frB(X,Y,\alpha)$, referred to as the \emph{formal concept lattice} of the $Q$-context. This concept lattice can be embedded as a full sub-$Q$-category into both $\mathcal{P}\delta_X$ and $\mathcal{P}^\dagger\delta_Y$. By endowing the hom functor with
\[
\frB(X,Y,\alpha)((\lambda,\sigma),(\mu,\rho))=\mu\swarrow\lambda=\rho\searrow\sigma,
\]
$\frB(X,Y,\alpha)$ forms a well-defined skeletal cocomplete $Q$-category, as verified in \cite{Belohlavek2002,LaiZhang2009}.

Our subsequent aim is to lift the object assignment $$(X,Y,\alpha)\mapsto\frB(X,Y,\alpha)$$ to a functor between suitable categories of $Q$-contexts. This lifting requires a rigorous notion of morphisms between $Q$-contexts, for which the back diagonal morphisms of $Q$-relations provide a natural and well-suited framework.

Back diagonals exist for any quantaloid (see \cite{ShenTaoZhang2016}), yet in what follows we focus exclusively on back diagonals in \(\QRel\). The category of back diagonals \(\BQRel\) within the quantaloid \(\QRel\) is defined below:
\begin{itemize}
\item \textbf{Objects}: All $Q$-relations in $\QRel$.
\item \textbf{Morphisms}: A morphism from a $Q$-relation $\alpha\colon X\nto Y$ to another $Q$-relation $\beta\colon X'\nto Y'$ is a $Q$-relation $f\colon X'\nto Y$ satisfying
\[
\alpha\swarrow(f\searrow\alpha)=f=(\beta\swarrow f)\searrow\beta.
\]
Or equivalently, 
\[
\exists u\colon X\nto X',\ \exists v\colon Y\nto Y',\quad \alpha\swarrow u=f=v\searrow\beta.
\]
\item \textbf{Composition}: For composable morphisms $f\colon\alpha\to\beta$ and $g\colon\beta\to\gamma$, their composite is given by
\[
g\bullet f:=(\beta\swarrow f)\searrow g=f\swarrow(g\searrow\beta).
\]
\end{itemize}

The category $\mathbf{B}(Q\text{-}\mathbf{Rel})$ inherits a quantaloid structure, where the partial order on each hom-set $\mathbf{B}(Q\text{-}\mathbf{Rel})(\alpha,\beta)$ is the opposite of the local order induced from $\QRel$. As an immediate consequence of Proposition \ref{QRel self-dual}, we establish the self-duality of $\mathbf{B}(Q\text{-}\mathbf{Rel})$.

\begin{proposition}
The object-wise assignment $\alpha\mapsto\alpha^\op$ extends to a category isomorphism between $\BQRel$ and its opposite category $(\BQRel)^\op$. Consequently, $\BQRel$ is self-dual.
\end{proposition}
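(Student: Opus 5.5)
We define the functor $(-)^\op\colon\BQRel\to(\BQRel)^\op$ on objects by $\alpha\mapsto\alpha^\op$. On morphisms, a morphism $f\colon\alpha\to\beta$ in $\BQRel$, with $\alpha\colon X\nto Y$, $\beta\colon X'\nto Y'$ and $f\colon X'\nto Y$, is sent to $f^\op\colon Y\nto X'$, regarded as a morphism $\beta^\op\to\alpha^\op$ in $\BQRel$. The typing is right: $\beta^\op\colon Y'\nto X'$ and $\alpha^\op\colon Y\nto X$, so a morphism $\beta^\op\to\alpha^\op$ must be a $Q$-relation from the domain $Y$ of $\alpha^\op$ to the codomain $X'$ of $\beta^\op$, which $f^\op$ is. Since $(f^\op)^\op=f$ and $(\alpha^\op)^\op=\alpha$, this assignment is its own inverse once functoriality is checked. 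Hence it suffices to verify three things: it is well defined on morphisms, it preserves identities, and it reverses composition.

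For well-definedness, I would use the existential characterization. If $f=\alpha\swarrow u=v\searrow\beta$ with $u\colon X\nto X'$ and $v\colon Y\nto Y'$, then the identities recalled before Proposition \ref{QRel self-dual} give
\[
f^\op=(\alpha\swarrow u)^\op=u^\op\searrow\alpha^\op,\qquad f^\op=(v\searrow\beta)^\op=\beta^\op\swarrow v^\op .
\]
Thus $f^\op=\beta^\op\swarrow v^\op=u^\op\searrow\alpha^\op$, with $v^\op\colon Y'\nto Y$ and $u^\op\colon X'\nto X$. This is exactly the existential condition for a morphism $\beta^\op\to\alpha^\op$, where $v^\op$ plays the role of "$u$" and $u^\op$ plays the role of "$v$". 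Alternatively, one can transpose the fixed-point equations $\alpha\swarrow(f\searrow\alpha)=f=(\beta\swarrow f)\searrow\beta$ directly, which interchanges them.

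For composition, take $f\colon\alpha\to\beta$ and $g\colon\beta\to\gamma$. Then
\[
(g\bullet f)^\op=\bigl((\beta\swarrow f)\searrow g\bigr)^\op=g^\op\swarrow(\beta\swarrow f)^\op=g^\op\swarrow(f^\op\searrow\beta^\op).
\]
On the other side, $f^\op\bullet g^\op$, the composite of $g^\op\colon\gamma^\op\to\beta^\op$ followed by $f^\op\colon\beta^\op\to\alpha^\op$, equals $g^\op\swarrow(f^\op\searrow\beta^\op)$ by the second formula defining $\bullet$. The two agree. Here the equality of the two expressions for $\bullet$, stated in the definition, is exactly what makes the computation close.

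For identities, I first identify the identity on $\alpha$: it is $\alpha$ itself, since $\alpha=\alpha\swarrow\delta_X=\delta_Y\searrow\alpha$ and $\alpha\swarrow(\alpha\searrow\alpha)=\alpha$. Its image is $\alpha^\op$, which is the identity on $\alpha^\op$.

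Finally, transposition preserves the pointwise order, so $(-)^\op$ is also an isomorphism of the hom-lattices. It is therefore an isomorphism of quantaloids, and this gives the self-duality. The main point needing care is the bookkeeping of directions in the well-definedness and composition steps, namely that the roles of $u$ and $v$ and the two formulas for $\bullet$ swap correctly; I expect this to follow directly from the transposition identities.
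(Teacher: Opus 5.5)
Your proof is correct and follows the argument the paper has in mind. The paper gives no separate proof and treats the statement as an immediate consequence of Proposition \ref{QRel self-dual}: transposition reverses composition and interchanges $\swarrow$ and $\searrow$, which is exactly what you spell out for well-definedness (with $u^\op$ and $v^\op$ swapping roles) and for $(g\bullet f)^\op=f^\op\bullet g^\op$. One small point: you only show that $\alpha$ lies in $\BQRel(\alpha,\alpha)$, not that it is the unit, but this follows at once from $f\bullet\alpha=\alpha\swarrow(f\searrow\alpha)=f$ and $\beta\bullet f=(\beta\swarrow f)\searrow\beta=f$, and identity preservation is in any case automatic for a hom-wise bijection that reverses composition.
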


We further verify that the back diagonal category admits enriched structure over the category of cocomplete $Q$-categories.

\begin{proposition} \label{prop:b-enriched}
The category $\mathbf{B}(Q\text{-}\mathbf{Rel})$ is $\QSup$-enriched.
\end{proposition}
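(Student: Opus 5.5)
We mirror the proof of Proposition~\ref{prop:qrel-enriched}: first exhibit each hom $\BQRel(\alpha,\beta)$ as a skeletal cocomplete $Q$-category (equivalently a $Q$-module), then check that $\bullet$ commutes with the scalar action in each variable. Since $\BQRel$ is already a quantaloid, each hom is a complete lattice and composition preserves joins in each variable. Only the scalar action and its compatibility with $\bullet$ remain.

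\textbf{Step 1: the hom-sets.} The hom $\BQRel(\alpha,\beta)$ is the set of $f\colon X'\nto Y$ that are fixed by the two closure-type operators $f\mapsto\alpha\swarrow(f\searrow\alpha)$ and $f\mapsto(\beta\swarrow f)\searrow\beta$. By the equivalent description, these are exactly the relations of the form $\alpha\swarrow u$ and simultaneously of the form $v\searrow\beta$. The set of relations of the form $\alpha\swarrow u$ is closed under $\QRel$-meets, since $\bigwedge_i(\alpha\swarrow u_i)=\alpha\swarrow\bigvee_i u_i$; the same holds for $v\searrow\beta$. Hence $\BQRel(\alpha,\beta)$ is closed under pointwise meets in $\QRel(X',Y)$. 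In the reversed order these meets become the joins of the hom.

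For the action we use scalar implication, $q\cdot f:=q\ra f$, just as $\CP^\dagger\delta$ carries $q\cdot\sigma=q\ra\sigma$ in Example~\ref{ex:discrete}. This is well defined by Proposition~\ref{prop:scalar}(2),(3):
\[
q\ra(\alpha\swarrow u)=\alpha\swarrow(q\&u),\qquad q\ra(v\searrow\beta)=(q\& v)\searrow\beta,
\]
so $q\ra f$ again has both required forms. The $Q$-module axioms in the reversed order follow pointwise from properties of $\ra$ in $Q$:
\begin{itemize}
\item $\rmk\ra a=a$;
\item $q\ra(p\ra a)=(q\&p)\ra a$;
\item $\bigwedge_i(q_i\ra a)=(\bigvee_i q_i)\ra a$;
\item $\bigwedge_i(q\ra a_i)=q\ra\bigwedge_i a_i$.
\end{itemize}
Since these joins are computed pointwise as meets in $\QRel$, the axioms hold. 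The resulting $Q$-category is skeletal because its underlying order is a partial order.

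\textbf{Step 2: compatibility with composition.} We must show that $g\bullet(q\ra f)=q\ra(g\bullet f)$, and symmetrically in $g$. Using the formula $g\bullet f=f\swarrow(g\searrow\beta)$ together with Proposition~\ref{prop:scalar}(3),
\[
(q\ra f)\swarrow(g\searrow\beta)=q\ra\big(f\swarrow(g\searrow\beta)\big).
\]
For the other variable we use the form $g\bullet f=(\beta\swarrow f)\searrow g$ and Proposition~\ref{prop:scalar}(2):
\[
(\beta\swarrow f)\searrow(q\ra g)=q\ra\big((\beta\swarrow f)\searrow g\big).
\]
Together with join-preservation, which is the quantaloid structure, composition is then a $\QSup$-bimorphism. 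This gives the $\QSup$-enrichment, exactly as in Proposition~\ref{prop:qrel-enriched}.

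\textbf{Main obstacle.} The delicate point is Step 1: the order is reversed, so we must confirm that the \emph{meets} of $\QRel$ really stay inside the hom. This needs the two-sided description of morphisms, namely that being of the form $\alpha\swarrow u$ and of the form $v\searrow\beta$ is equivalent to the fixed-point conditions. We take this equivalence as given in the definition. The two formulas for $\bullet$ must also be used interchangeably, and their agreement is likewise part of the definition.
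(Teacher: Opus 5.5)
Your proposal is correct and follows essentially the same route as the paper. You use the action $q\cdot f=q\ra f$, prove it is well defined via Proposition~\ref{prop:scalar}(2),(3) applied to the factorizations $\alpha\swarrow u=f=v\searrow\beta$, and obtain compatibility of $\bullet$ with the action from the same proposition. Beyond the paper, you also check that hom-sets are closed under pointwise meets and that the module axioms hold in the reversed order, and you handle the $f$-variable directly through $g\bullet f=f\swarrow(g\searrow\beta)$ instead of appealing to duality.
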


\begin{proof}
As a quantaloid, $\mathbf{B}(Q\text{-}\mathbf{Rel})$ is inherently $\mathbf{Sup}$-enriched: every hom-set forms a complete lattice, and the composition operation preserves arbitrary local joins in both arguments. To upgrade this enrichment to $\QSup$-enrichment, it suffices to equip each hom-set with a compatible $Q$-action and verify that composition commutes with this action in both variables.

First, we define a canonical $Q$-action on the hom-sets of $\mathbf{B}(Q\text{-}\mathbf{Rel})$. For any morphism $f\in\mathbf{B}(Q\text{-}\mathbf{Rel})(\alpha,\beta)$ and any scalar $q\in Q$, we set
\[
q\cdot f=q\ra f,
\]
where $\ra$ denotes the scalar implication operation on $Q$-relations. We now confirm that $q\cdot f$ is indeed a morphism in $\mathbf{B}(Q\text{-}\mathbf{Rel})(\alpha,\beta)$. Since there exist $Q$-relations $u,v$ such that $f=\alpha\swarrow u=v\searrow\beta$, applying the compatibility between scalar implication and meets of $Q$-relations (Proposition \ref{prop:scalar}), we compute
\[
\alpha\swarrow(q\& u)=q\ra(\alpha\swarrow u)=q\cdot f
=q\ra(v\searrow\beta)=(q\& v)\searrow\beta.
\]
This shows that $q\cdot f$ is a back diagonal morphism. Thus, the assignment $(q,f)\mapsto q\cdot f$ yields a well-defined $Q$-action on each hom-set of $\mathbf{B}(Q\text{-}\mathbf{Rel})$, endowing every hom-set with the structure of a cocomplete $Q$-category.

Next, we prove that the composition operation $\bullet$ preserves the $Q$-action in both arguments, making it a $\QSup$-bimorphism. Consider composable morphisms $f\in\mathbf{B}(Q\text{-}\mathbf{Rel})(\alpha,\beta)$ and $g\in\mathbf{B}(Q\text{-}\mathbf{Rel})(\beta,\gamma)$. Direct calculation gives
\[
\begin{aligned}
(q\cdot g)\bullet f
&=(\beta\swarrow f)\searrow(q\cdot g)\\
&=(\beta\swarrow f)\searrow(q\ra g)\\
&=(q\&(\beta\swarrow f))\searrow g\\
&=q\ra((\beta\swarrow f)\searrow g)\\
&=q\cdot(g\bullet f).
\end{aligned}
\]
By duality, the identity $g\bullet(q\cdot f)=q\cdot(g\bullet f)$ also holds. These identities confirm that composition commutes with the scalar $Q$-action on both sides. Therefore, $\mathbf{B}(Q\text{-}\mathbf{Rel})$ is a $\QSup$-enriched category as desired.
\end{proof}

Let $\tau\colon\sigt\nto Q$ denote the canonical $Q$-relation defined by $\tau(\star,q)=q$ for all $q\in Q$. Utilizing the $\QSup$-enriched structure of $\mathbf{B}(Q\text{-}\mathbf{Rel})$, we define the representable functor
\[
\frB:=\mathbf{B}(Q\text{-}\mathbf{Rel})(-,\tau)\colon\BQRel^\op\to\QSup.
\]
Explicitly, for each $Q$-relation $\alpha\colon X\nto Y$, the object action is $\mathfrak{B}\alpha=\mathbf{B}(Q\text{-}\mathbf{Rel})(\alpha,\tau)$, and for each morphism $f\colon\alpha\to\beta$ in $\mathbf{B}(Q\text{-}\mathbf{Rel})$, the morphism action is given by $\mathfrak{B}f(\sigma)=\sigma\bullet f$.

\begin{theorem}
For any $Q$-context $(X,Y,\alpha)$, there exists a canonical isomorphism of $Q$-categories
\[
\frB(X,Y,\alpha)\cong\frB\alpha.
\]
Consequently, the formal concept lattice construction $(X,Y,\alpha)\mapsto\frB(X,Y,\alpha)$ is functorial over the category $\BQRel^\op$.
\end{theorem}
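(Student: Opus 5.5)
We identify the hom-set $\BQRel(\alpha,\tau)$ explicitly and match it with the set of formal concepts of $(X,Y,\alpha)$. Since $\alpha\colon X\nto Y$ and $\tau\colon\sigt\nto Q$, a morphism $\alpha\to\tau$ is a $Q$-relation $f\colon\sigt\nto Y$, i.e.\ a map $f\colon Y\to Q$, that is, an element of $\CP^\dagger\delta_Y$. By definition it must satisfy
\[
\alpha\swarrow(f\searrow\alpha)=f=(\tau\swarrow f)\searrow\tau .
\]
The first identity says exactly that $f$ is the second component of a formal concept, namely $(f\searrow\alpha,f)$. The plan is to show that the second identity holds automatically for every $f\colon\sigt\nto Y$. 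Then $\frB\alpha$ consists precisely of the fixed points $\sigma=\alpha\swarrow(\sigma\searrow\alpha)$, and these are in bijection with the formal concepts via $\sigma\mapsto(\sigma\searrow\alpha,\sigma)$, with inverse $(\lambda,\sigma)\mapsto\sigma$.

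The key computation, which I expect to be the main (though elementary) obstacle, is $(\tau\swarrow f)\searrow\tau=f$. By the pointwise formulas, $(\tau\swarrow f)(y,q)=f(y)\ra q$ for $y\in Y$, $q\in Q$. Hence
\[
((\tau\swarrow f)\searrow\tau)(y)=\bigwedge_{q\in Q}\big((f(y)\ra q)\ra q\big).
\]
Taking $q=f(y)$ gives $(\rmk)\ra f(y)=f(y)$ as an upper bound. Conversely, $f(y)\le (f(y)\ra q)\ra q$ for all $q$, by evaluation. So the meet equals $f(y)$, and $\tau$ behaves as a ``cogenerator'' making the $\tau$-side condition vacuous. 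I would check the index conventions of $\swarrow$ and $\searrow$ carefully here, since the argument hinges on the exact orientation of $\tau$.

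Next we compare $Q$-category structures. The hom-set $\frB\alpha$ carries the $\QSup$-structure of Proposition~\ref{prop:b-enriched}: the order is opposite to the pointwise order and the action is $q\cdot f=q\ra f$. This is exactly the order and action of $\CP^\dagger\delta_Y$ from Example~\ref{ex:discrete}, restricted to the fixed points. The induced hom is $\bigvee\{q\mid q\ra\sigma\ge\rho\}$, and by adjunction this equals $\bigwedge_y \rho(y)\ra\sigma(y)$. Here a small check is needed: a closure-type subset closed under $q\ra(-)$ and meets yields the full sub-$Q$-category structure. This gives $\rho\searrow\sigma=\CP^\dagger\delta_Y(\sigma,\rho)$ up to the orientation convention, which is the hom of $\frB(X,Y,\alpha)$ as recalled. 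Hence the bijection is an isomorphism of $Q$-categories.

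Finally, functoriality follows because $\frB=\BQRel(-,\tau)$ is a representable functor $\BQRel^\op\to\QSup$ via $\frB f(\sigma)=\sigma\bullet f$. Transporting along the natural isomorphisms above makes $(X,Y,\alpha)\mapsto\frB(X,Y,\alpha)$ functorial on $\BQRel^\op$. Explicitly, $f\colon\alpha\to\beta$ sends a concept $(\lambda,\sigma)$ of $\beta$ to the concept determined by $\sigma\bullet f=f\swarrow(\sigma\searrow\beta)$.
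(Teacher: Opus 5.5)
Your proposal is correct and follows essentially the same route as the paper: the $\tau$-side condition $(\tau\swarrow f)\searrow\tau=f$ holds automatically because $\bigwedge_{q\in Q}((f(y)\ra q)\ra q)=f(y)$, so $\frB\alpha$ is exactly the set of fixed points $\sigma=\alpha\swarrow(\sigma\searrow\alpha)$. These fixed points correspond to formal concepts via $\sigma\mapsto(\sigma\searrow\alpha,\sigma)$, the homs agree because $\bigvee\{q\mid q\ra\sigma\geq\rho\}=\rho\searrow\sigma$, and functoriality is inherited from the representable functor $\BQRel(-,\tau)$.

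One small precision in your upper-bound step: at $q=f(y)$ the term is $(f(y)\ra f(y))\ra f(y)$. Since $f(y)\ra f(y)\geq\rmk$ and $\ra$ is antitone in its first argument, this term is only $\leq\rmk\ra f(y)=f(y)$, not literally equal to it when $Q$ is non-integral. That inequality is all your argument needs.
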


\begin{proof}
Let $(\lambda,\sigma)$ be any formal concept of $(X,Y,\alpha)$. Direct computation yields
\[
\begin{aligned}
((\tau\swarrow\sigma)\searrow\tau)(x)
&=\bigwedge_{q\in Q}\big((\sigma(x)\ra\tau(q))\ra\tau(q)\big)\\
&=\bigwedge_{q\in Q}\big((\sigma(x)\ra q)\ra q\big)\\
&=\sigma(x)
\end{aligned}
\]
for all $x\in X$, which establishes the identity $(\tau\swarrow\sigma)\searrow\tau=\sigma$. By the formal concept conditions $\sigma=\alpha\swarrow\lambda$ and $\lambda=\sigma\searrow\alpha$, we obtain $\sigma=\alpha\swarrow(\sigma\searrow\alpha)$. Thus, $\sigma\in\frB\alpha$.

Conversely, for any $\sigma\in\frB\alpha$, define $\lambda=\sigma\searrow\alpha$. The fixed-point condition for $\frB\alpha$ guarantees $\sigma=\alpha\swarrow\lambda$, so that $(\lambda,\sigma)$ is a formal concept of $(X,Y,\alpha)$. This yields a bijection between the objects of $\frB(X,Y,\alpha)$ and $\frB\alpha$.

For all \(\sigma,\rho\in\frB\alpha\), the hom function of \(\frB\alpha\) is given by
\[
\frB\alpha(\sigma,\rho)
=\bigvee\{q\in Q\mid q\ra\sigma\geq\rho\}
=\rho\searrow\sigma,
\]
which coincides with the hom function of the \(Q\)-category \(\frB(X,Y,\alpha)\).

The object bijection together with identical hom functions induces a canonical $Q$-category isomorphism $\frB(X,Y,\alpha)\cong\frB\alpha$. The functoriality of the formal concept lattice construction follows immediately from the functoriality of $\frB$.
\end{proof}

\begin{proposition}[\cite{ShenTaoZhang2016}] \label{prop:b-equivalence}
The representable functor $\mathfrak{B}\colon\mathbf{B}(Q\text{-}\mathbf{Rel})^{\mathrm{op}}\to\QSup$ is a category equivalence.
\end{proposition}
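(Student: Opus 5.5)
To show $\frB$ is an equivalence, I will prove that it is essentially surjective on objects and fully faithful.

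\textbf{Essential surjectivity.} Let $\bbA$ be a skeletal cocomplete $Q$-category with underlying set $A=\bbA_0$, and take the $Q$-relation $\alpha:=\bbA\colon A\nto A$, i.e.\ the hom itself. The formal concepts of $(A,A,\bbA)$ are the pairs $(\lambda,\sigma)$ with $\lambda=\sigma\searrow\bbA$ and $\sigma=\bbA\swarrow\lambda$. Since $\bbA$ is cocomplete, so is $\bbA^\op$, and both the Yoneda and co-Yoneda embeddings have adjoints. Standard arguments (the Dedekind--MacNeille/Isbell completion of a complete $Q$-category is the category itself) then show that these pairs are exactly $(\bbA(-,a),\bbA(a,-))$ for $a\in A$. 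The hom of the concept lattice is then $\bbA(a,b)$ by Lemma \ref{lem:yoneda}. By the preceding Theorem this gives $\frB\alpha\cong\frB(A,A,\bbA)\cong\bbA$.

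\textbf{Full faithfulness.} Fix $\alpha\colon X\nto Y$ and $\beta\colon X'\nto Y'$. I will identify $\BQRel(\alpha,\beta)$ with $\QSup(\frB\beta,\frB\alpha)$ via $f\mapsto(\sigma\mapsto\sigma\bullet f)$. For the inverse, take a left adjoint $\phi\colon\frB\beta\to\frB\alpha$. Each copresheaf $\beta(x',-)$, for $x'\in X'$, is a fixed point and so lies in $\frB\beta$ (via $\tau$-morphisms). Evaluate $\phi$ there and set
\[
f(x',y)=\phi(\beta(x',-))(y).
\]
I will check that $f$ satisfies $\alpha\swarrow(f\searrow\alpha)=f=(\beta\swarrow f)\searrow\beta$. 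This uses that $\phi$ preserves joins and the action $q\cdot\sigma=q\ra\sigma$ (Proposition \ref{prop:adjoint-char}), and that every element of $\frB\beta$ is a join of the $\beta(x',-)$ weighted by scalars. The two assignments are then mutually inverse by the same density argument.

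\textbf{Main obstacle.} The hard step is the density/generation statement: $\frB\beta$ is generated under $\QSup$-colimits by the copresheaves $\beta(x',-)$, computed with the dual order and $q\cdot\sigma=q\ra\sigma$. Once that holds, left adjoints out of $\frB\beta$ are determined by their values on generators, and the back-diagonal conditions on $f$ encode exactly that these values land in $\frB\alpha$ and extend compatibly. Alternatively, since the statement is cited from \cite{ShenTaoZhang2016}, I could invoke their general result that for any quantaloid $\mathcal{Q}$ the back diagonals of $\mathcal{Q}$ form the category of $\mathcal{Q}$-Sup-lattices via the representable functor. Specialised to $\QRel$, whose Cauchy completion of $Q$ as a one-object quantaloid yields $\QRel$, this gives the claim.
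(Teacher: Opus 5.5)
Your proof is correct, but it takes a different route from the paper. The paper gives no argument and imports the statement from \cite{ShenTaoZhang2016}, where back diagonals between distributors over an arbitrary small quantaloid are shown to be dually equivalent to complete categories and left adjoints. You instead verify the equivalence by hand in $\QRel$. For essential surjectivity you realise $\bbA$ as $\frB$ of its own hom $\bbA\colon\bbA_0\nto\bbA_0$. The concepts are $(\bbA(-,a),\bbA(a,-))$ because $\bbA\swarrow\lambda=\bbA(S_{\bbA}\lambda,-)$ for the presheaf $\lambda=\sigma\searrow\bbA$, and skeletality makes $a$ unique. For full faithfulness you use the explicit inverse $f(x',y)=\phi(\beta(x',-))(y)$. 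The density you flag as the main obstacle is immediate. Joins in $\frB\beta$ are pointwise meets and $q\cdot\sigma=q\ra\sigma$, so the fixed-point identity $\sigma=\beta\swarrow(\sigma\searrow\beta)$ literally reads $\sigma=\bigvee_{x'}(\sigma\searrow\beta)(x')\cdot\beta(x',-)$. This gives $\phi(\sigma)=f\swarrow(\sigma\searrow\beta)=\sigma\bullet f$.

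Two points should be written out rather than attributed to density. First, the extent condition $(\beta\swarrow f)\searrow\beta=f$ on the columns is not automatic. Put $\rho_y=\beta\swarrow f(-,y)=\bigvee_{x'}f(x',y)\cdot\beta(x',-)\in\frB\beta$. Then $\phi(\rho_y)(y)=\bigwedge_{x'}f(x',y)\ra f(x',y)\geq\rmk$, while also $\phi(\rho_y)(y)=\bigwedge_{x'}(\rho_y\searrow\beta)(x')\ra f(x',y)$. Hence $((\beta\swarrow f)\searrow\beta)(-,y)=\rho_y\searrow\beta\leq f(-,y)$, which is the only nontrivial inequality. Second, recovering $f$ from $\frB f$ uses the back-diagonal identity, not density: $\beta(x',-)\bullet f=(\beta\swarrow f)\searrow\beta(x',-)$ is the $x'$-row of $(\beta\swarrow f)\searrow\beta=f$.

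Your route buys a self-contained proof with explicit inverse data on objects and hom-sets; the citation buys generality over arbitrary quantaloids and non-discrete categories. In your fallback remark, $\QRel$ is the direct-sum (matrix) completion of $Q$, not its Cauchy completion. Specialising the cited theorem means restricting to distributors between discrete $Q$-categories. This is harmless, since that restriction is full and your essential-surjectivity argument already lands there.
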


\section{Diagonals and property oriented concepts of $Q$-relations} \label{sec:diag}

This section develops the dual theory for property-oriented concepts of $Q$-contexts, based on the diagonal category of $Q$-relations. Let $(X,Y,\alpha)$ be a $Q$-context. A pair $(\lambda,\mu)\in \QRel(Y,\sigt)\times \QRel(X,\sigt)$ is called a \emph{property-oriented concept} \cite{Duntsch2002,Popescu2004,Yao2004} of $(X,Y,\alpha)$ if it satisfies the system
\[
\lambda=\mu\swarrow\alpha,\qquad \mu=\lambda\circ\alpha.
\]

Any given $Q$-relation $\alpha\colon X\nto Y$ induces a canonical adjunction
\[
(-)\swarrow\alpha\vdash(-)\circ\alpha\colon\mathcal{P}\delta_Y\to\mathcal{P}\delta_X
\]
between skeletal cocomplete $Q$-categories $\mathcal{P}\delta_Y$ and $\mathcal{P}\delta_X$. Similar to the case of formal concepts, the two components of each property-oriented concept are mutually determined via this adjunction. Specifically, $\lambda$ is a fixed point of the $Q$-functor $$((-)\circ\alpha)\swarrow\alpha\colon\mathcal{P}\delta_Y\to\mathcal{P}\delta_Y,$$
 while $\mu$ is a fixed point of the dual $Q$-functor $$((-)\swarrow\alpha)\circ\alpha\colon\mathcal{P}\delta_X\to\mathcal{P}\delta_X.$$

For any two property-oriented concepts $(\lambda,\mu)$ and $(\lambda',\mu')$ of $(X,Y,\alpha)$, the hom functor compatibility condition
\[
\mathcal{P}\delta_X(\lambda,\lambda')=\mathcal{P}\delta_Y(\mu,\mu')
\]
holds, which translates to the relational identity $\lambda'\swarrow\lambda=\mu'\swarrow\mu$.

All property-oriented concepts of a $Q$-context $(X,Y,\alpha)$ form a skeletal cocomplete $Q$-category $\mathfrak{P}(X,Y,\alpha)$, termed the \emph{property-oriented concept lattice} of the $Q$-context. This lattice can be embedded as a full sub-$Q$-category into both $\mathcal{P}\delta_X$ and $\mathcal{P}\delta_Y$. Equipping the hom functor with
\[
\mathfrak{P}(X,Y,\alpha)((\lambda,\mu),(\lambda',\mu'))=\lambda'\swarrow\lambda=\mu'\swarrow\mu,
\]
endows $\mathfrak{P}(X,Y,\alpha)$ with a well-defined skeletal cocomplete $Q$-category structure \cite{LaiZhang2009}.

Analogously to the formal-concept setting, we lift the object assignment  $$(X,Y,\alpha)\mapsto\mathfrak{P}(X,Y,\alpha)$$ to a functor by means of the category of diagonal morphisms for $Q$-relations. Diagonals exist for any quantaloid (see \cite{Stubbe2014}), yet in what follows we focus exclusively on diagonals in \(\QRel\). The category of diagonals \(\DQRel\) within the quantaloid \(\QRel\) is defined below:
\begin{itemize}
\item \textbf{Objects}: All $Q$-relations in $\QRel$.
\item \textbf{Morphisms}: A morphism from $\alpha\colon X\nto Y$ to $\beta\colon X'\nto Y'$ is a $Q$-relation $f\colon X\nto Y'$ satisfying
\[
(f\swarrow\alpha)\circ\alpha=f=\beta\circ(\beta\searrow f).
\]
Equivalently, $f$ admits the factorization
\[
\exists u\colon X\nto X',\ \exists v\colon Y\nto Y',\quad v\circ\alpha=f=\beta\circ u.
\]
\item \textbf{Composition}: For composable morphisms $f\colon\alpha\to\beta$ and $g\colon\beta\to\gamma$, their composite is defined as
\[
g\odt f:=g\circ(\beta\searrow f)=(g\swarrow\beta)\circ f.
\]
\end{itemize}

The category \(\DQRel\) is a quantaloid in which the partial order on each hom-set \(\DQRel(\alpha,\beta)\) coincides with the local order inherited from \(\QRel\). Moreover, \(\QRel\) is a full sub-quantaloid of \(\DQRel\), since every \(Q\)-relation \(\alpha\colon X\nto Y\) canonically determines a diagonal morphism from \(\delta_X\) to \(\delta_Y\). The assignment \(\alpha\mapsto\alpha^\op\) extends to an isomorphism between \(\DQRel\) and \(\DQRel^\op\); hence \(\DQRel\) is self-dual. 

We now establish the $\QSup$-enriched structure of the diagonal category, mirroring the result for the back diagonal category.

\begin{proposition} \label{prop:d-enriched}
The category $\mathbf{D}(Q\text{-}\mathbf{Rel})$ is $\QSup$-enriched.
\end{proposition}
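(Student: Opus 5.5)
We mirror the proof of Proposition~\ref{prop:b-enriched}. Since $\DQRel$ is a quantaloid, it is already $\Sup$-enriched: each hom-set is a complete lattice and composition $\odt$ preserves local joins in both variables. By Proposition~\ref{prop:adjoint-char}, it therefore suffices to do two things. First, equip each hom-set $\DQRel(\alpha,\beta)$ with a $Q$-module action compatible with its lattice structure. Second, show that $\odt$ commutes with this action in each variable.

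Because the local order on $\DQRel(\alpha,\beta)$ is the one inherited from $\QRel$, not its opposite, the natural choice is the scalar multiplication action $q\cdot f=q\& f$, rather than the implication used for back diagonals. We must check that $q\& f$ is again a diagonal morphism. Using the factorization description, write $f=v\circ\alpha=\beta\circ u$. Proposition~\ref{prop:scalar}(1) then gives
\[
(q\& v)\circ\alpha=q\&(v\circ\alpha)=q\& f=q\&(\beta\circ u)=\beta\circ(q\& u),
\]
so $q\& f\in\DQRel(\alpha,\beta)$.

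The module axioms are inherited pointwise from $\QRel$, because joins in $\DQRel(\alpha,\beta)$ are computed as in $\QRel$. This needs one check: a pointwise join of diagonal morphisms is again diagonal. That holds because $\bigvee_i v_i\circ\alpha=(\bigvee_i v_i)\circ\alpha$, and similarly on the $\beta$ side. Consequently $\DQRel(\alpha,\beta)$ is closed under joins and scalar multiplication in $\QRel(X,Y')$. It is therefore a sub-$Q$-module, and hence a skeletal cocomplete $Q$-category.

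Next we verify that composition preserves the action. For $f\colon\alpha\to\beta$ and $g\colon\beta\to\gamma$, Proposition~\ref{prop:scalar}(1) gives
\[
(q\cdot g)\odt f=(q\& g)\circ(\beta\searrow f)=q\&\big(g\circ(\beta\searrow f)\big)=q\cdot(g\odt f).
\]
Symmetrically, using the other formula $g\odt f=(g\swarrow\beta)\circ f$, we get
\[
g\odt(q\cdot f)=(g\swarrow\beta)\circ(q\& f)=q\&\big((g\swarrow\beta)\circ f\big)=q\cdot(g\odt f).
\]
Together with join preservation, this shows that $\odt$ is a $\QSup$-bimorphism in each variable. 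The identity morphism on $\alpha$, namely $\alpha$ itself, poses no extra difficulty.

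The main subtle point is the closure of the hom-sets under the action and under joins. Since the defining fixed-point conditions involve the residuals $\swarrow$ and $\searrow$, closure under $q\&(-)$ and $\bigvee$ is not obvious from those conditions directly. The plan is to use the existential factorization form of diagonals throughout, where closure is immediate by bilinearity of composition.
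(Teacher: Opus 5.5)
Your proposal is correct and follows the paper's proof essentially step for step. You use the action $q\cdot f=q\& f$, show closure of hom-sets via the factorization $f=v\circ\alpha=\beta\circ u$ and Proposition~\ref{prop:scalar}(1), and check compatibility of $\odt$ with the action using the two expressions $g\circ(\beta\searrow f)$ and $(g\swarrow\beta)\circ f$. Your explicit check that hom-sets are closed under pointwise joins is a small addition that the paper simply absorbs into its assertion that $\DQRel$ is a quantaloid.
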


\begin{proof}
As a quantaloid, $\mathbf{D}(Q\text{-}\mathbf{Rel})$ is $\mathbf{Sup}$-enriched, with complete lattice hom-sets and bilaterally join-preserving composition. To establish $\QSup$-enrichment, we verify the existence of a compatible $Q$-action on all hom-sets and confirm that composition preserves this action bilaterally.

For any morphism $f\in\mathbf{D}(Q\text{-}\mathbf{Rel})(\alpha,\beta)$ and scalar $q\in Q$, we define the $Q$-action by
\[
q\cdot f=q\& f,
\]
i.e., the scalar multiplication of $q$ and $f$. We verify that $q\cdot f$ remains a diagonal morphism. Using the morphism factorization property, there exist $Q$-relations $u,v$ such that $f=v\circ\alpha=\beta\circ u$. By Proposition \ref{prop:scalar}(1), we compute
\[
(q\& v)\circ\alpha=q\&(v\circ\alpha)=q\& f=q\&(\beta\circ u)=\beta\circ(q\& u).
\]
This factorization confirms that $q\cdot f$ satisfies the diagonal morphism condition, so the assignment $(q,f)\mapsto q\cdot f$ defines a valid $Q$-action on each hom-set of $\mathbf{D}(Q\text{-}\mathbf{Rel})$, equipping each hom-set with a cocomplete $Q$-category structure.

Next, we verify the compatibility of composition with the $Q$-action. For composable morphisms $f\in\mathbf{D}(Q\text{-}\mathbf{Rel})(\alpha,\beta)$ and $g\in\mathbf{D}(Q\text{-}\mathbf{Rel})(\beta,\gamma)$, direct calculation yields
\[
(q\cdot g)\odt f=(q\& g)\circ(\beta\searrow f)
=q\&(g\circ(\beta\searrow f))=q\cdot(g\odt f).
\]
The dual identity
\[
g\odt(q\cdot f)=(g\swarrow\beta)\circ(q\& f)
=q\&((g\swarrow\beta)\circ f)=q\cdot(g\odt f)
\]
holds symmetrically. These equalities demonstrate that the diagonal composition operation commutes with the scalar $Q$-action in both arguments. Therefore, $\mathbf{D}(Q\text{-}\mathbf{Rel})$ is a $\QSup$-enriched category as desired.
\end{proof}

Leveraging the $\QSup$-enriched structure of $\mathbf{D}(Q\text{-}\mathbf{Rel})$, we define the representable functor
\[
\mathfrak{D}:=\mathbf{D}(Q\text{-}\mathbf{Rel})(-,\star_\mathrm{k})\colon\mathbf{D}(\QRel)^\op\to\QSup,
\]
which sends each $Q$-relation $\alpha\colon X\nto Y$ to the cocomplete $Q$-category $\mathfrak{D}\alpha$, and each morphism $f\in\mathbf{D}(Q\text{-}\mathbf{Rel})(\beta,\alpha)$ to the left adjoint $\mathfrak{D}f\colon\mathfrak{D}\alpha\to\mathfrak{D}\beta$ given by $\mathfrak{D}f(\lambda)=\lambda\odt f$. 

\begin{theorem}
For any $Q$-context $(X,Y,\alpha)$, there exists a canonical isomorphism of $Q$-categories:
\[
\mathfrak{P}(X,Y,\alpha)\cong\frD\alpha.
\]
Accordingly, the property-oriented concept lattice construction $(X,Y,\alpha)\mapsto\mathfrak{P}(X,Y,\alpha)$ is functorial over the category $\DQRel^\op$.
\end{theorem}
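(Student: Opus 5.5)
The plan is to mirror the proof of the analogous theorem for $\frB$. First I unpack $\frD\alpha=\DQRel(\alpha,\stk)$, where $\stk$ is identified with the identity $Q$-relation $\delta_{\sigt}\colon\sigt\nto\sigt$. A morphism $\alpha\to\delta_{\sigt}$ in $\DQRel$ is a $Q$-relation $\mu\colon X\nto\sigt$, i.e.\ an element of $\mathcal{P}\delta_X$, satisfying
\[
(\mu\swarrow\alpha)\circ\alpha=\mu=\delta_{\sigt}\circ(\delta_{\sigt}\searrow\mu).
\]
Since $\delta_{\sigt}$ is the identity of $\QRel$, the right-hand condition is automatic: $\delta_{\sigt}\searrow\mu=\mu$ and $\delta_{\sigt}\circ\mu=\mu$. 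Hence the objects of $\frD\alpha$ are exactly the fixed points $\mu$ of $((-)\swarrow\alpha)\circ\alpha$ on $\mathcal{P}\delta_X$.

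Next I build the bijection with $\mathfrak{P}(X,Y,\alpha)$.
\begin{itemize}
\item Given a property-oriented concept $(\lambda,\mu)$, we have $\lambda=\mu\swarrow\alpha$ and $\mu=\lambda\circ\alpha$. Substituting gives $\mu=(\mu\swarrow\alpha)\circ\alpha$, so $\mu\in\frD\alpha$.
\item Conversely, given $\mu\in\frD\alpha$, I put $\lambda:=\mu\swarrow\alpha$. The fixed-point condition gives $\mu=\lambda\circ\alpha$ directly, so $(\lambda,\mu)$ is a property-oriented concept.
\end{itemize}
These two assignments are mutually inverse. In one direction this is immediate. In the other, $\lambda$ is determined by $\mu$ via $\lambda=\mu\swarrow\alpha$.

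Then I compare hom functions. By Proposition~\ref{prop:d-enriched}, the $Q$-action on $\frD\alpha$ is $q\cdot\mu=q\&\mu$, and the order is the one inherited from $\QRel$, i.e.\ pointwise. The induced hom is therefore
\[
\frD\alpha(\mu,\mu')=\bigvee\{q\in Q\mid q\&\mu\le\mu'\}.
\]
By the adjunction $q\&\mu\le\mu'\iff q\le\mu'\swarrow\mu$, where $\mu'\swarrow\mu=\bigwedge_x\mu(x)\ra\mu'(x)$ is the $Q$-relation $\sigt\nto\sigt$ read as a scalar, this supremum equals $\mu'\swarrow\mu$. The hom of $\mathfrak{P}(X,Y,\alpha)$ is also $\mu'\swarrow\mu$, as recorded before the statement. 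So the bijection preserves homs and is a $Q$-category isomorphism. Functoriality over $\DQRel^\op$ then transfers from the representable functor $\frD$.

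The only point needing care, which I expect to be the main obstacle, is the identification of $\frD\alpha$ and its $Q$-category structure. One must check that the $Q$-module $\DQRel(\alpha,\stk)$ induces exactly the hom $\mu'\swarrow\mu$, i.e.\ that the order and action are not reversed. This is where the diagonal case differs from the back-diagonal case, in which the order and action are dual. Here both are inherited directly from $\QRel$, so the computation above should go through without difficulty.
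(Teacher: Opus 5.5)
Your proposal is correct and follows essentially the same route as the paper: you use the same object bijection $\mu\mapsto(\mu\swarrow\alpha,\mu)$ via the fixed-point condition $\mu=(\mu\swarrow\alpha)\circ\alpha$, and the same hom computation $\frD\alpha(\mu,\mu')=\bigvee\{q\in Q\mid q\&\mu\le\mu'\}=\mu'\swarrow\mu$, with functoriality inherited from $\frD$. Your explicit unpacking of $\DQRel(\alpha,\stk)$ (observing that the condition $\delta_{\sigt}\circ(\delta_{\sigt}\searrow\mu)=\mu$ holds automatically) and your check that the two assignments are mutually inverse spell out steps the paper leaves implicit.
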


\begin{proof}
We first verify the object-wise correspondence between $\mathfrak{P}(X,Y,\alpha)$ and $\frD\alpha$. For any $\mu\in\frD\alpha$, define $\lambda=\mu\swarrow\alpha$. The fixed-point condition immediately implies $\mu=\lambda\circ\alpha$, so that the pair $(\lambda,\mu)$ is exactly a property-oriented concept of the $Q$-context $(X,Y,\alpha)$.

Conversely, suppose $(\lambda,\mu)$ is an arbitrary property-oriented concept of $(X,Y,\alpha)$. By the definition of property-oriented concepts, the equalities $\mu=\lambda\circ\alpha$ and $\lambda=\mu\swarrow\alpha$ hold simultaneously. Substitution yields $\mu=(\mu\swarrow\alpha)\circ\alpha$, which ensures $\mu\in\frD\alpha$. This establishes a bijection between the objects of $\mathfrak{P}(X,Y,\alpha)$ and $\frD\alpha$.

The hom function of $\frD\alpha$ is characterized by
\[
\frD\alpha(\mu,\mu')=\bigvee\{q\in Q\mid q\&\mu\leq\mu'\}=\mu'\swarrow\mu,\quad \forall\mu,\mu'\in\frD\alpha. 
\]
This perfectly matches the hom function of the $Q$-category $\mathfrak{P}(X,Y,\alpha)$.

The object bijection together with the coincidence of hom functors induces a canonical $Q$-category isomorphism $\mathfrak{P}(X,Y,\alpha)\cong\frD\alpha$. Furthermore, since $\frD$ is functorial, the assignment $(X,Y,\alpha)\mapsto\mathfrak{P}(X,Y,\alpha)$ on objects inherits the functoriality on the category $\DQRel^\op$.
\end{proof}

When the base quantale $Q$ is a Girard quantale, the diagonal and back diagonal categories become categorically isomorphic, as established in \cite{Shen2021}. For a $Q$-relation $\alpha\colon X\nto Y$, define its \emph{complement} as the $Q$-relation $\neg\alpha\colon Y\nto X$ with
\[
\neg\alpha(y,x)=\alpha(x,y)\ra\mathrm{d},
\]
where $\mathrm{d}$ is the dualizing element of the Girard quantale $Q$. We now verify the morphism-wise correspondence induced by complementation:
\begin{align*}
f\in\mathbf{D}(Q\text{-}\mathbf{Rel})(\alpha,\beta)
&\iff (f\swarrow\alpha)\circ\alpha=f=\beta\circ(\beta\searrow f)\\
&\iff \neg((f\swarrow\alpha)\circ\alpha)=\neg f=\neg(\beta\circ(\beta\searrow f))\\
&\iff (\neg\alpha)\swarrow(f\swarrow\alpha)=\neg f=(\beta\searrow f)\searrow(\neg\alpha)\\
&\iff (\neg\alpha)\swarrow((\neg f)\searrow(\neg\alpha))=\neg f=((\neg\beta)\swarrow(\neg f))\searrow(\neg\alpha)\\
&\iff \neg f\in\mathbf{B}(Q\text{-}\mathbf{Rel})(\neg\alpha,\neg\beta).
\end{align*}
For composable morphisms $f\colon\alpha\to\beta$ and $g\colon\beta\to\gamma$ in $\mathbf{D}(\QRel)$, the complementation operation further commutes with composition:
\begin{align*}
\neg(g\odt f)&=\neg(g\circ(\beta\searrow f))\\
&=(\beta\searrow f)\searrow(\neg g)\\
&=((\neg\beta)\swarrow(\neg f))\searrow(\neg g)\\
&=(\neg g)\bullet(\neg f).
\end{align*}
These computations confirm that the complementation assignment $\alpha\mapsto\neg\alpha$ defines a category isomorphism
\[
\neg\colon \DQRel\to\BQRel.
\]

\begin{lemma} \label{lem:girard-b}
Let $Q$ be a Girard quantale, and let $\star_\mathrm{d}$ denote the $Q$-relation on the singleton set $\{\star\}$ with $\star_\mathrm{d}(\star,\star)=\mathrm{d}$. Then the functor $\mathfrak{B}$ admits the alternative representation $\mathfrak{B}=\mathbf{B}(Q\text{-}\mathbf{Rel})(-,\star_\mathrm{d})$. Moreover, the following diagram commutes:
\[
\begin{tikzcd}
\mathbf{D}(Q\text{-}\mathbf{Rel})^{\mathrm{op}}
\arrow{rr}{\neg} \arrow{dr}[swap]{\mathfrak{D}} &&
\mathbf{B}(Q\text{-}\mathbf{Rel})^{\mathrm{op}} \arrow{dl}{\mathfrak{B}} \\
& \QSup &
\end{tikzcd}
\]
In this case, $\frD:\DQRel^\op\to\QSup$ is an equivalence. 
\end{lemma}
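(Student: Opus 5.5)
The plan has three parts: the alternative representation of $\frB$, commutativity of the triangle, and the equivalence claim.

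\emph{Alternative representation.} I will show that the hom-sets $\BQRel(\alpha,\tau)$ and $\BQRel(\alpha,\star_\rmd)$ are isomorphic as cocomplete $Q$-categories, naturally in $\alpha$. Unwinding the definition, a morphism $\alpha\to\star_\rmd$ is a $Q$-relation $\sigma\colon\sigt\nto Y$, i.e.\ a map $Y\to Q$, satisfying
\[
\alpha\swarrow(\sigma\searrow\alpha)=\sigma=(\star_\rmd\swarrow\sigma)\searrow\star_\rmd .
\]
Pointwise the right-hand condition reads $(\sigma(y)\ra\rmd)\ra\rmd=\sigma(y)$, which holds automatically in a Girard quantale. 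Likewise, for $\tau$ the proof of the preceding theorem shows that $(\tau\swarrow\sigma)\searrow\tau=\sigma$ holds automatically. Hence both hom-sets equal the set of fixed points $\sigma=\alpha\swarrow(\sigma\searrow\alpha)$. They carry the same $Q$-action $q\ra\sigma$, hence the same hom $\rho\searrow\sigma$.

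For naturality, I will check that for $f\colon\beta\to\alpha$ the composite $\sigma\bullet f=(\alpha\swarrow f)\searrow\sigma$ does not depend on the codomain object. Since $\bullet$ is computed from $\alpha$, $f$ and $\sigma$ only, this is immediate. (Equivalently, $\tau\cong\star_\rmd$ in $\BQRel$; but the direct comparison avoids having to exhibit that isomorphism.)

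\emph{Commutativity of the triangle.} Here $\neg$ is the isomorphism $\DQRel\to\BQRel$ constructed just before the lemma. I first compute $\neg\star_\rmk$: it is the relation on $\sigt$ with value $\rmk\ra\rmd=\rmd$, so $\neg\star_\rmk=\star_\rmd$. Since $\neg$ is an isomorphism of categories, it induces bijections
\[
\DQRel(\alpha,\star_\rmk)\to\BQRel(\neg\alpha,\star_\rmd)=\frB\neg\alpha,
\]
and these are natural in $\alpha$ because $\neg$ preserves composition. The remaining point is that each such bijection is an isomorphism in $\QSup$, i.e.\ that $\neg$ respects the $Q$-actions. This is the pointwise Girard identity
\[
\neg(q\&f)(y,x)=(q\&f(x,y))\ra\rmd=q\ra(f(x,y)\ra\rmd)=(q\ra\neg f)(y,x),
\]
so $\neg(q\cdot f)=q\cdot\neg f$, using the actions $q\&f$ on $\DQRel$ and $q\ra f$ on $\BQRel$. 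Together with bijectivity, this shows that $\neg$ is a $\QSup$-enriched isomorphism, so hom-objects correspond as skeletal cocomplete $Q$-categories. This gives $\frD=\frB\circ\neg^{\op}$.

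\emph{Equivalence.} $\frB$ is an equivalence by Proposition~\ref{prop:b-equivalence}, and $\neg$ is an isomorphism, so their composite $\frD$ is an equivalence.

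The step I expect to need the most care is the identification of the two representations of $\frB$ and the action-compatibility of $\neg$, in particular keeping the orientation conventions of $\swarrow$ and $\searrow$ straight in the pointwise checks. The rest is formal.
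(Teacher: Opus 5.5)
Your proposal is correct and follows the paper's argument. The paper likewise notes that $(\star_\rmd\swarrow\sigma)\searrow\star_\rmd=\sigma$ holds automatically in a Girard quantale, so that $\BQRel(\alpha,\star_\rmd)$ coincides with $\frB\alpha$, and then calls the triangle immediate; you spell out what it leaves unsaid ($\neg\star_\rmk=\star_\rmd$, compatibility of $\neg$ with the $Q$-actions, and the equivalence via Proposition~\ref{prop:b-equivalence}), with one detail still implicit. A bijection that preserves the actions need not preserve order, so you also need that $\neg$ is an antitone involution, which matches the reversed local order of $\BQRel$; moreover, the identity $(q\&a)\ra\rmd=q\ra(a\ra\rmd)$ you use holds in any commutative quantale, not only Girard ones.
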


\begin{proof}
For any $Q$-relation $\alpha\colon X\nto Y$, a $Q$-relation $\sigma\colon\{\star\}\nto Y$ lies in $\mathbf{B}(\QRel)(\alpha,\star_\mathrm{d})$ if and only if $\alpha\swarrow(\sigma\searrow\alpha)=\sigma$, since the identity $(\star_\mathrm{d}\swarrow\sigma)\searrow\star_\mathrm{d}=\sigma$ holds automatically for Girard quantales. This condition is exactly the defining property of objects in $\mathfrak{B}\alpha$, yielding $\mathfrak{B}\alpha=\BQRel(\alpha,\star_\mathrm{d})$ for all $Q$-relations $\alpha$. The commutative diagram follows immediately.
\end{proof}

Provided that the base quantale $Q$ is integral, the functor $\frD\colon\DQRel^\op\to\QSup$ restricts to a categorical equivalence if and only if $Q$ is a Girard quantale.

\begin{proposition}[\cite{LaiZhang2009}] \label{prop:ess-surj}
Let \(Q\) be an integral quantale. The representable functor
$$
\frD\colon \DQRel^{\mathrm{op}}\to \QSup
$$
is essentially surjective if and only if \(Q\) is a Girard quantale. Moreover, \(\frD\) is an equivalence precisely when \(Q\) is Girard.
\end{proposition}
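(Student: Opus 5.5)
Two directions need proof. If $Q$ is Girard, Lemma~\ref{lem:girard-b} already gives that $\frD$ is an equivalence, hence essentially surjective, so the content is the converse. Assume $Q$ is integral and $\frD$ is essentially surjective. Then the skeletal cocomplete $Q$-category $\bbQ^{\op}=\CP^\dagger\star_\rmk$ (Example~\ref{ex:discrete}) must be isomorphic to $\frD\alpha$ for some $\alpha\colon X\nto Y$. I will exploit that every $\frD\alpha$ sits inside $\CP\delta_X$ as a full sub-$Q$-category closed under joins and scalar multiplication. The reason is that $\frD\alpha$ is the image of the left adjoint $(-)\circ\alpha\colon \CP\delta_Y\to\CP\delta_X$, and the action on $\frD\alpha$ is $q\cdot\mu=q\&\mu$, so suprema in $\frD\alpha$ are computed pointwise. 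By contrast, in $\bbQ^{\op}$ the action is $q\cdot p=q\ra p$ and joins are meets in $Q$.

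Fix an isomorphism $\Phi\colon\bbQ^{\op}\to\frD\alpha$. I set $\mu_0:=\Phi(\top)$, the image of the top of $Q$, which is the bottom of $\bbQ^{\op}$. I also set $\mu_1:=\Phi(\bot)$, which is the top of $\frD\alpha$, and I use it to locate a candidate dualising element.

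The key computation uses the homs. For $p\in Q$,
\[
Q\text{-hom: }\ \bbQ^{\op}(p,\bot)=\bot\ra p...
\]
More usefully, $\bbQ^{\op}(p,p')=p'\ra p$, and this must equal $\Phi(p')\swarrow\Phi(p)=\bigwedge_x \Phi(p)(x)\ra\Phi(p')(x)$. I intend to take $\rmd:=\bbQ^{\op}(\bot,\top)$-type elements, concretely $\rmd:=\bigwedge_x \mu_1(x)\ra\mu_0(x)$. I then show $q=(q\ra\rmd)\ra\rmd$ by transporting the double-negation-like identity that holds in $\frD\alpha$.

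In $\frD\alpha$, every element is a weighted join of the form $q\&\mu_1$ restricted through the closure $((-)\swarrow\alpha)\circ\alpha$. In $\bbQ^{\op}$, the element $p$ equals $p\cdot\bot$ computed as $p\ra\bot$? That fails, so instead I use the copower: $p=S(\text{weight})$ with $p=\bigwedge\{\ldots\}$. The cleaner route is to note that the right adjoint to $\Phi(-)\swarrow\mu$ must match $\ra$. The representable $\bbQ^{\op}(-,\top)\colon\bbQ^{\op}\to\bbQ^{\op}$ sends $p\mapsto \top\ra p=p$. Under $\Phi$ this becomes $\mu\mapsto \mu_0\swarrow\mu$, which is then an order-reversing involution-compatible map. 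Integrality ($\rmk=\top$) is what makes the underlying orders and unit checks collapse, e.g. $\rmk\le\mu\swarrow\mu'$ iff $\mu'\le\mu$.

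The main obstacle is extracting a single element $\rmd\in Q$ from the pointwise data on $X$. I would try to show that the map $p\mapsto\Phi(p)$ is determined by one point: since $\Phi$ preserves the action, $\Phi(q\ra p)=q\&\Phi(p)$ up to closure. This forces $\Phi(p)=\Phi(\top)$-independent structure, namely $\Phi(q\ra\bot)=q\&\mu_1$, with the closure acting trivially because the result must already be a fixed point. Homs then give $q\ra\bot$ versus $q$ via $(q'\ra\bot)\ra(q\ra\bot)=\mu_1$-weighted $q\ra q'$. Choosing $x$ with $\mu_1(x)$ maximal, or taking meets over $x$, should yield that $\ra\bot$ is an involution. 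This means $\bot$ (or the computed $\rmd$) is dualising, so $Q$ is Girard. The "moreover" part then follows from Lemma~\ref{lem:girard-b} together with the implication just proved.
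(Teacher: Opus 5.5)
\paragraph{A gap in the converse.} Your ``if'' direction and the ``moreover'' clause via Lemma~\ref{lem:girard-b} are fine; the paper gives no proof of its own here, only the citation to Lai--Zhang. Your set-up for the converse is also right: an isomorphism $\Phi\colon\bbQ^{\op}\to\frD\alpha$, where $\frD\alpha\subseteq\CP\delta_X$ is closed under pointwise joins and under $q\&(-)$ and carries the homs $\mu'\swarrow\mu$.

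However, the decisive step, $(q\ra\bot)\ra\bot=q$, is never derived. The route you sketch through $\mu_1=\Phi(\bot)$ cannot deliver it. Write $\neg q:=q\ra\bot$. The relations $\Phi(\neg q)=q\&\mu_1$ give the hom identities
\[
\neg q'\ra\neg q=\bigwedge_{x\in X}\bigl(q\&\mu_1(x)\bigr)\ra\bigl(q'\&\mu_1(x)\bigr).
\]
These involve only elements of the form $\neg q$, which are automatically fixed by double negation. The elements $p<\neg\neg p$ that must be ruled out never appear.

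In fact these constraints can be satisfied when $Q$ is not Girard. Take the three-element G\"odel chain $0<a<1$ with $\&=\wedge$, $X=\{\star\}$ and $\mu_1=a$. Then $\neg q\mapsto q\wedge a$ is well defined, preserves joins and the actions, and satisfies the displayed identity for all $q,q'$. Yet $\neg\neg a=1\neq a$. So no argument using only this data can succeed.

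In particular, the specialisation $q=\top$, namely $\neg\neg q'=(q'\&\mu_1)\swarrow\mu_1$, yields $\neg\neg q'\le q'$ when some $\mu_1(x)=\top$. Nothing forces such an $x$ to exist; in the example $\max\mu_1=a$. Your claim that every element of $\frD\alpha$ is generated from $\mu_1$ is also circular: it says that $\bbQ^{\op}$ is generated by its top element $\bot$, which is equivalent to the conclusion.

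\paragraph{The missing step.} It is in a line you wrote and then abandoned:
\[
p=\top\ra p=\bbQ^{\op}(p,\top)=\mu_0\swarrow\Phi(p).
\]
This is exactly where integrality enters; your order check $\rmk\le\mu\swarrow\mu'\iff\mu'\le\mu$ needs no integrality. What you did not use is that $\mu_0=\Phi(\top)$ is known. The element $\top$ is the bottom of $\bbQ^{\op}$, i.e.\ its empty join; $\Phi$ preserves joins; and joins in $\frD\alpha$ are computed pointwise in $Q^X$. So $\mu_0$ is the constant map with value $\bot$. Hence, for every $p\in Q$,
\[
p=\bigwedge_{x\in X}\bigl(\Phi(p)(x)\ra\bot\bigr)=\Bigl(\bigvee_{x\in X}\Phi(p)(x)\Bigr)\ra\bot .
\]
Thus every element of $Q$ has the form $\neg s$. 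Since $\neg\neg\neg s=\neg s$, this gives $\neg\neg p=p$ for all $p$, so $\bot$ is a dualising element and $Q$ is Girard.

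Your $\rmd=\mu_0\swarrow\mu_1$ equals $\top\ra\bot=\bot$, so it is the right element. But the argument through $\mu_1$, ``closure'' and maximal points should be replaced by this computation. Also, no closure is involved in $\Phi(q\ra p)=q\&\Phi(p)$, since $\frD\alpha$ is already closed under $q\&(-)$.
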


For general non-Girard quantales, the functor $\mathfrak{D}$ fails to be an equivalence, though it retains faithful behavior. We formally establish the faithfulness of $\mathfrak{D}$ below.

\begin{proposition} \label{prop:faithful}
The discrete singleton $Q$-category $\star_\mathrm{k}$ serves as both a separator and a coseparator for $\mathbf{D}(\QRel)$. As a consequence, the functor $\mathfrak{D}\colon\mathbf{D}(Q\text{-}\mathbf{Rel})^\op\to\QSup$ is faithful.
\end{proposition}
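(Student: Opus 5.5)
The plan is to verify the coseparator property directly, which is exactly what faithfulness of $\frD$ requires, and then to obtain the separator property from the self-duality of $\DQRel$. Faithfulness of $\frD=\DQRel(-,\star_\mathrm{k})\colon\DQRel^\op\to\QSup$ means the following. For parallel diagonal morphisms $f,g\colon\beta\to\alpha$, where $\beta\colon X'\nto Y'$ and $\alpha\colon X\nto Y$ (so $f,g\colon X'\nto Y$), if $\lambda\odt f=\lambda\odt g$ for every $\lambda\in\DQRel(\alpha,\star_\mathrm{k})$, then $f=g$. This is precisely the statement that $\star_\mathrm{k}$ is a coseparator. The separator statement asks instead that morphisms $\star_\mathrm{k}\to\beta$ detect equality under post-composition. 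It will follow by applying the coseparator argument in $\DQRel^\op$ and transporting back along the isomorphism $\DQRel\cong\DQRel^\op$ induced by $\alpha\mapsto\alpha^\op$.

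For the coseparator property I will use a family of test morphisms indexed by $y\in Y$. Let $e_y\colon Y\nto\sigt$ be the $Q$-relation $e_y(y')=\delta_Y(y',y)$, and set $\lambda_y:=e_y\circ\alpha$, i.e.\ $\lambda_y(x)=\alpha(x,y)$. By the factorization characterization of diagonal morphisms (take $v=e_y$ and $u=\lambda_y$, since $\star_\mathrm{k}\circ u=u$), $\lambda_y$ lies in $\DQRel(\alpha,\star_\mathrm{k})$.

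The key step is to compute $\lambda_y\odt f=(\lambda_y\swarrow\alpha)\circ f$ and show that it equals the column $f(-,y)$. For the lower bound, $e_y\circ\alpha\le\lambda_y$ gives $e_y\le\lambda_y\swarrow\alpha$, hence
\[(\lambda_y\swarrow\alpha)\circ f\ \ge\ e_y\circ f=f(-,y).\]
For the upper bound, write $f=\alpha\circ u$ using the diagonal condition. Then
\[(\lambda_y\swarrow\alpha)\circ\alpha\circ u\ \le\ \lambda_y\circ u=(e_y\circ\alpha\circ u)=f(-,y).\]
Hence $\lambda_y\odt f=f(-,y)$, and likewise $\lambda_y\odt g=g(-,y)$. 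If $\frD f=\frD g$, then these columns agree for every $y$, so $f=g$.

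The step I expect to need the most care is keeping straight the variance conventions: which side $\odt$ acts on, and which of the two factorizations $f=v\circ\beta$ or $f=\alpha\circ u$ is the one that collapses the $\swarrow$. The separator half is then obtained formally from the self-duality, using test morphisms $\star_\mathrm{k}\to\beta$ of the form $\beta\circ e_x^{\op}$, i.e.\ the rows $\beta(x,-)$. Together these yield both claims and the faithfulness of $\frD$.
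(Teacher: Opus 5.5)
Your proposal is correct and follows essentially the same route as the paper: the same test morphisms $\alpha(-,y)=\delta_Y(-,y)\circ\alpha\in\DQRel(\alpha,\star_\mathrm{k})$, the same identity $\alpha(-,y)\odt f=f(-,y)$ giving the coseparator property (hence faithfulness of $\frD$), and duality for the separator half. The only cosmetic difference is that the paper evaluates the composite as $\alpha(-,y)\circ(\alpha\searrow f)$ and applies $f=\alpha\circ(\alpha\searrow f)$ directly, whereas you use the form $(\lambda_y\swarrow\alpha)\circ f$ with a two-sided estimate; both are valid.
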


\begin{proof}
Let $f,g\colon\beta\to\alpha$ be distinct parallel morphisms in $\mathbf{D}(\QRel)$, where $\beta\colon X'\nto Y'$ and $\alpha\colon X\nto Y$. By distinctness, there exists an element $y_0\in Y$ such that $f(-,y_0)\neq g(-,y_0)$. Note that the $Q$-relation $\alpha(-,y_0)\colon X\nto\{\star\}$ is a morphism in $\mathbf{D}(\QRel)(\alpha,\star_\mathrm{k})$, as it satisfies $\alpha(-,y_0)=\delta_Y(-,y_0)\circ\alpha$. Direct computation gives
\[
\alpha(-,y_0)\odt f=\alpha(-,y_0)\circ(\alpha\searrow f)=\bigvee_{x\in X}\alpha(x,y_0)\&(\alpha\searrow f)(-,x)=f(-,y_0),
\]
and similarly
\[
\alpha(-,y_0)\odt g=g(-,y_0).
\]
The inequality $f(-,y_0)\neq g(-,y_0)$ implies $\alpha(-,y_0)\odt f\neq\alpha(-,y_0)\odt g$, so $\star_\mathrm{k}$ is a coseparator for $\mathbf{D}(\QRel)$. Dually, one verifies that $\star_\mathrm{k}$ is also a separator. The faithfulness of $\mathfrak{D}$ follows from the fact that $\star_\mathrm{k}$ is a coseparating object.
\end{proof}

We conclude this section with an explicit example demonstrating that $\mathfrak{D}$ is not full in general, even though it is always faithful.

\begin{example} \label{ex:not-full}
Let $Q=([0,\infty],\geq,+)$ be the Lawvere quantale, with monoidal operation $\&=+$ and reversed natural order. Consider the two $Q$-relations on the singleton set $\{\star\}$ defined by
\[
\alpha\colon\sigt\nto\sigt,\quad \alpha(\star,\star)=0,\qquad \beta\colon\sigt\nto\sigt,\quad \beta(\star,\star)=b,\quad 0<b<\infty.
\]
Direct computation yields
\[
\mathfrak{D}\alpha=[0,\infty],\qquad \mathfrak{D}\beta=[b,\infty],\qquad \mathbf{D}(Q\text{-}\mathbf{Rel})(\alpha,\beta)=[b,\infty].
\]
For any morphism $r\in\mathbf{D}(\QRel)(\alpha,\beta)$ and any object $\sigma\in\mathfrak{D}\beta$, the functor action is
\[
\mathfrak{D}r(\sigma)=\sigma\odt r=\sigma-b+r\geq b.
\]
The map $\sigma\mapsto\sigma-b$ defines a $\QSup$-isomorphism from $\mathfrak{D}\beta$ to $\mathfrak{D}\alpha$, yet this isomorphism cannot be realized as the image of any morphism in $\mathbf{D}(\QRel)$ under $\mathfrak{D}$. This proves that the functor $\mathfrak{D}$ is not full for the Lawvere quantale $Q$.
\end{example}

    
\section{Pointwise tensor products of $Q$-relations and tensor products of cocomplete $Q$-categories}
 The functor $\mathfrak{B}\colon\BQRel^\op\to\QSup$ induces a categorical equivalence. Moreover, if $Q$ is a Girard quantale, the three categories $\QSup$, $\BQRel^\op$, and $\DQRel^\op$ are pairwise equivalent. It is well known that $\QSup$ admits a canonical symmetric monoidal closed structure with respect to its standard tensor product $\otimes$. In contrast, arbitrary pairs of $Q$-relations admit a natural pointwise tensor product $*$ defined componentwise. This section systematically explores the intrinsic relationship between these two tensor product constructions and establishes structural compatibility results for regular $Q$-relations and idempotent $Q$-relations, as well as for constructively completely distributive cocomplete $Q$-categories.

Given two $Q$-relations $\alpha\colon X\nto Y$ and $\beta\colon X'\nto Y'$, we define their \emph{pointwise tensor product} to be the $Q$-relation $\alpha*\beta\colon X\times X'\nto Y\times Y'$ whose values are specified pointwise by
\[
(\alpha*\beta)\bigl((x,x'),(y,y')\bigr)
=\alpha(x,y)\mathbin{\&}\beta(x',y').
\]

This pointwise tensor product is compatible with the composition of $Q$-relations in $\QRel$. Such compatibility further extends to the diagonal composition of morphisms in $\DQRel$, as formalized in the following results.

\begin{proposition}\label{prop:tensor-compat}
Let $\alpha\colon X\nto Y$, $\beta\colon Y\nto Z$ and $\alpha'\colon X'\nto Y'$, $\beta'\colon Y'\nto Z'$ be composable $Q$-relations. The pointwise tensor product commutes with relational composition, i.e.,
\[
(\beta\circ\alpha)*(\beta'\circ\alpha')
=(\beta*\beta')\circ(\alpha*\alpha').
\]
\end{proposition}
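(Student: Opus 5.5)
The identity is a pointwise calculation, so I will simply evaluate both sides at an arbitrary pair of points and compare. Fix $(x,x')\in X\times X'$ and $(z,z')\in Z\times Z'$.

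\emph{Left-hand side.} By the definition of $*$ and of relational composition,
\[
\bigl((\beta\circ\alpha)*(\beta'\circ\alpha')\bigr)\bigl((x,x'),(z,z')\bigr)
=\Bigl(\bigvee_{y\in Y}\beta(y,z)\&\alpha(x,y)\Bigr)\&\Bigl(\bigvee_{y'\in Y'}\beta'(y',z')\&\alpha'(x',y')\Bigr).
\]

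\emph{Right-hand side.} The composite $(\beta*\beta')\circ(\alpha*\alpha')$ is taken over the middle set $Y\times Y'$, so its value at the same pair of points is
\[
\bigvee_{(y,y')\in Y\times Y'}\beta(y,z)\&\beta'(y',z')\&\alpha(x,y)\&\alpha'(x',y').
\]

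\emph{Comparison.} The left-hand side is converted into the right-hand side in two moves.
\begin{enumerate}[label=(\roman*)]
\item Since $\&$ distributes over arbitrary joins in each variable separately, the product of the two joins expands into a single join indexed by $Y\times Y'$. Applying distributivity first in one argument and then in the other gives
\[
\bigvee_{y\in Y}\ \bigvee_{y'\in Y'}\bigl(\beta(y,z)\&\alpha(x,y)\bigr)\&\bigl(\beta'(y',z')\&\alpha'(x',y')\bigr).
\]
An iterated join over $Y$ and $Y'$ equals the single join over $Y\times Y'$.
\item Commutativity and associativity of $\&$ rearrange each term $\bigl(\beta(y,z)\&\alpha(x,y)\bigr)\&\bigl(\beta'(y',z')\&\alpha'(x',y')\bigr)$ into $\beta(y,z)\&\beta'(y',z')\&\alpha(x,y)\&\alpha'(x',y')$, which is the corresponding term on the right.
\end{enumerate}

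There is no real obstacle. The only point needing care is the distributivity step (i), which uses join-preservation in both arguments, together with the remark that commutativity of $Q$ is essential for step (ii). Since the pair of points was arbitrary, the two $Q$-relations agree everywhere. In the same way, $\delta_X*\delta_{X'}=\delta_{X\times X'}$, so $*$ is functorial on $\QRel$.
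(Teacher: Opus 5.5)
Your proof is correct and is essentially the paper's own argument. It evaluates both sides pointwise and uses distributivity of $\&$ over arbitrary joins in each variable, together with commutativity and associativity, to turn the product of two joins into the single join over $Y\times Y'$. Your closing remark that $\delta_X*\delta_{X'}=\delta_{X\times X'}$ is also correct, since $\bot\&a=\bot$.
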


\begin{proof}
This identity follows from direct pointwise computation, together with the fact that the commutative monoidal operation $\&$ in $Q$ distributes over arbitrary suprema. For all $(x,x')\in X\times X'$ and $(z,z')\in Z\times Z'$,
\[
\begin{aligned}
&\bigl((\beta*\beta')\circ(\alpha*\alpha')\bigr)\bigl((x,x'),(z,z')\bigr)\\
&\quad=\bigvee_{y\in Y,\,y'\in Y'}
\bigl(\beta(y,z)\mathbin{\&}\beta'(y',z')\bigr)
\mathbin{\&}\bigl(\alpha(x,y)\mathbin{\&}\alpha'(x',y')\bigr)\\
&\quad=\Bigl(\bigvee_{y\in Y}\beta(y,z)\mathbin{\&}\alpha(x,y)\Bigr)
\mathbin{\&}\Bigl(\bigvee_{y'\in Y'}\beta'(y',z')\mathbin{\&}\alpha'(x',y')\Bigr)\\
&\quad=(\beta\circ\alpha)(x,z)\mathbin{\&}(\beta'\circ\alpha')(x',z').
\end{aligned}
\]
The right-hand side exactly matches the pointwise definition of $(\beta\circ\alpha)*(\beta'\circ\alpha')$, which completes the proof.
\end{proof}

\begin{proposition}\label{prop:tensor-diag-compat}
Let \(f\colon\alpha\to\beta\) and \(f'\colon\alpha'\to\beta'\) be morphisms in \(\DQRel\). Then the pointwise tensor product \(f\ast f'\) defines a morphism in \(\DQRel(\alpha\ast\alpha',\beta\ast\beta')\). Furthermore, for any additional morphisms \(g\colon\beta\to\gamma\) and \(g'\colon\beta'\to\gamma'\) in \(\DQRel\), the pointwise tensor product and composition of diagonal morphisms commute in the sense that
    \[
    (g\ast g')\odt (f\ast f')=(g\odt f)\ast(g'\odt f').
    \]
\end{proposition}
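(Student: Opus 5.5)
The plan is to work entirely with the factorization description of diagonal morphisms, together with Proposition \ref{prop:tensor-compat}.

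\textbf{Step 1: $f\ast f'$ is a diagonal morphism.} Since $f\in\DQRel(\alpha,\beta)$, there are $Q$-relations $u,v$ with $v\circ\alpha=f=\beta\circ u$. Likewise $f'\in\DQRel(\alpha',\beta')$ gives $u',v'$ with $v'\circ\alpha'=f'=\beta'\circ u'$. Proposition \ref{prop:tensor-compat} then yields
\[
(v\ast v')\circ(\alpha\ast\alpha')=(v\circ\alpha)\ast(v'\circ\alpha')=f\ast f'=(\beta\circ u)\ast(\beta'\circ u')=(\beta\ast\beta')\circ(u\ast u').
\]
This is exactly the factorization criterion for $f\ast f'\in\DQRel(\alpha\ast\alpha',\beta\ast\beta')$.

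\textbf{Step 2: compatibility with diagonal composition.} Naively one would compare $(g\ast g')\circ\bigl((\beta\ast\beta')\searrow(f\ast f')\bigr)$ with $\bigl(g\circ(\beta\searrow f)\bigr)\ast\bigl(g'\circ(\beta'\searrow f')\bigr)$. That route would require $(\beta\ast\beta')\searrow(f\ast f')=(\beta\searrow f)\ast(\beta'\searrow f')$, which fails in general because implications do not split over $\&$. I would avoid internal homs altogether. The key observation is that the diagonal composite does not depend on the chosen factorization: if $g=w\circ\beta$ for some $w$, then
\[
g\odt f=(g\swarrow\beta)\circ f=g\circ(\beta\searrow f)=w\circ\beta\circ(\beta\searrow f)=w\circ f,
\]
where the last equality is the diagonal condition $\beta\circ(\beta\searrow f)=f$ for $f$.

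\textbf{Step 3: conclude.} Choose $w,w'$ with $g=w\circ\beta$ and $g'=w'\circ\beta'$. By Proposition \ref{prop:tensor-compat}, $g\ast g'=(w\ast w')\circ(\beta\ast\beta')$, so $w\ast w'$ is a left factor of $g\ast g'$ through $\beta\ast\beta'$. By Step 1, $f\ast f'$ is a diagonal morphism into $\beta\ast\beta'$, so the identity of Step 2 applies to the pair $(f\ast f',g\ast g')$. Using Proposition \ref{prop:tensor-compat} once more, we get
\[
(g\ast g')\odt(f\ast f')=(w\ast w')\circ(f\ast f')=(w\circ f)\ast(w'\circ f')=(g\odt f)\ast(g'\odt f').
\]

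The main obstacle is the one noted in Step 2: resisting the pointwise comparison of implications. Once the composite is rewritten as $w\circ f$ through an arbitrary factorization, everything reduces to Proposition \ref{prop:tensor-compat}. The only care needed is to check that Step 2's identity uses nothing beyond $g=w\circ\beta$ and $\beta\circ(\beta\searrow f)=f$.
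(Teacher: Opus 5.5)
Your proof is correct and follows the paper's argument. The paper also gets the first claim from the two-sided factorization and Proposition \ref{prop:tensor-compat}, and for the interchange law it writes $g\ast g'=\bigl((g\swarrow\beta)\ast(g'\swarrow\beta')\bigr)\circ(\beta\ast\beta')$ and cancels $(\beta\ast\beta')\circ\bigl((\beta\ast\beta')\searrow(f\ast f')\bigr)=f\ast f'$. That is exactly your Steps 2--3 with the canonical choice $w=g\swarrow\beta$, $w'=g'\swarrow\beta'$, and you also correctly derive that cancellation from Step 1, where the paper's text mistakenly cites $f\ast f'\in\DQRel(\beta\ast\beta',\gamma\ast\gamma')$.
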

    
\begin{proof}
We first verify that \(f\ast f'\) is a well-defined morphism in \(\DQRel(\alpha\ast\alpha',\beta\ast\beta')\). Since \(f\in\DQRel(\alpha,\beta)\) and \(f'\in\DQRel(\alpha',\beta')\) are diagonal morphisms, there exist \(Q\)-relations \(u,v\) and \(u',v'\) satisfying the characteristic factorisation conditions of \(\DQRel\)-morphisms:
    \[
    \beta\circ u=f=v\circ\alpha,\qquad \beta'\circ u'=f'=v'\circ\alpha'.
    \]
By virtue of Proposition \ref{prop:tensor-compat}, the tensor product operation preserves such relational factorisations, yielding
    \[
    (\beta\ast\beta')\circ(u\ast u')=f\ast f'=(v\ast v')\circ(\alpha\ast\alpha').
    \]
This two-sided factorisation property implies \(f\ast f'\in\DQRel(\alpha\ast\alpha',\beta\ast\beta')\).
    
We now prove the commutative identity relating tensor products and diagonal composition. By the definition of the diagonal composition \(\odt\) on \(\DQRel\), we have
    \[
    (g\ast g')\odt (f\ast f')=(g\ast g')\circ\bigl((\beta\ast\beta')\searrow (f\ast f')\bigr).
    \]
    Since \(g\colon\beta\to\gamma\) and \(g'\colon\beta'\to\gamma'\) are morphisms in \(\DQRel\), each diagonal morphism admits the standard decomposition \(g=(g\swarrow\beta)\circ\beta\) and \(g'=(g'\swarrow\beta')\circ\beta'\). Applying Proposition \ref{prop:tensor-compat}, the tensor product distributes over relational composition, so that
    \[
    g\ast g'=\big((g\swarrow\beta)\circ\beta\big)\ast\big((g'\swarrow\beta')\circ\beta'\big)=\big((g\swarrow\beta)\ast(g'\swarrow\beta')\big)\circ(\beta\ast\beta').
    \]
    Substituting this decomposition into the previous formula yields the following equational derivation:
    \begin{align*}
    (g\ast g')\odt (f\ast f')
    &=\big((g\swarrow\beta)\ast(g'\swarrow\beta')\big)\circ(\beta\ast\beta')\circ\bigl((\beta\ast\beta')\searrow (f\ast f')\bigr)\\
    &=\big((g\swarrow\beta)\ast(g'\swarrow\beta')\big)\circ(f\ast f')  \\
    &=\big((g\swarrow\beta)\circ f\big)\ast\big((g'\swarrow\beta')\circ f'\big)\\
    &=(g\odt f)\ast(g'\odt f').
    \end{align*}
The second step holds due to the fact $f\ast f'\in\DQRel(\beta\ast\beta',\gamma\ast\gamma')$.   The third step follows directly from Proposition \ref{prop:tensor-compat}. The final equality is exactly the definition of $\odt$.
\end{proof}

As an immediate consequence of the above interchange law, the pointwise tensor product endows the category $\DQRel$ with a symmetric monoidal structure.

\begin{corollary}\label{cor:d-monoidal}
The category $(\DQRel,*,\stk)$ is a symmetric monoidal category.
\end{corollary}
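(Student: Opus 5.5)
We will show that $(\DQRel,*,\stk)$ is symmetric monoidal by checking the data of a monoidal category one layer at a time. Proposition \ref{prop:tensor-diag-compat} already gives two facts: $f*f'$ is a morphism $\alpha*\alpha'\to\beta*\beta'$, and $*$ respects diagonal composition. For functoriality of $*\colon\DQRel\times\DQRel\to\DQRel$ it remains to check that identities go to identities. In $\DQRel$ the identity on $\alpha\colon X\nto Y$ is $\alpha$ itself, since $\alpha\odt f=\alpha\circ(\alpha\searrow f)=f$ for any morphism $f$ into $\alpha$. Hence the identity on $\alpha*\alpha'$ is $\alpha*\alpha'$, which is the pointwise tensor of the two identities. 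So $*$ is a bifunctor.

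Next we build the coherence isomorphisms. The idea is to lift them from $\QRel$ by composing with the relation itself. Let $a\colon (X\times X')\times X''\cong X\times(X'\times X'')$ be the set bijection and $\delta_a$ its graph. The associator at $(\alpha,\alpha',\alpha'')$ is taken to be
\[
A:=\delta_{a_Y}\circ\bigl((\alpha*\alpha')*\alpha''\bigr)=\bigl(\alpha*(\alpha'*\alpha'')\bigr)\circ\delta_{a_X},
\]
where the equality holds by associativity of $\&$. This expression is exactly the factorisation form $v\circ\alpha=f=\beta\circ u$, so $A$ is a $\DQRel$-morphism.

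The unitors and the symmetry are defined the same way. The unitor uses $X\times\{\star\}\cong X$ together with $\stk(\star,\star)=\rmk$, and is the relation $\alpha$ read on $X\times\sigt\nto Y$. The symmetry is $\delta_{s_Y}\circ(\alpha*\beta)=(\beta*\alpha)\circ\delta_{s_X}$, using commutativity of $\&$.

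Each such map is invertible in $\DQRel$, with inverse given by the analogous relation built from the inverse bijection. To check this we use a general rule. If $f=\beta\circ\delta_u$ with $u$ a bijection, then $\beta\searrow f=\delta_u$, because the adjunctions are computed pointwise and $\delta_u$ is an invertible map. Consequently
\[
g\odt f=g\circ\delta_u,
\]
which reduces every composite in $\DQRel$ to an ordinary composite with graphs of bijections.

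For naturality, take a morphism $f\colon\alpha\to\beta$ and write the relevant structure morphism as $\beta\circ\delta_u$. The rule above turns both legs of the naturality square into
\[
(\text{relation})\circ\delta_u\quad\text{or}\quad\delta_w\circ(\text{relation}),
\]
and these agree pointwise by the associativity, unit law and commutativity of $\&$. The pentagon, triangle and hexagon identities reduce in the same way to the corresponding identities among the bijections of sets. Those bijections satisfy the coherence laws in the symmetric monoidal category $(\mathbf{Set},\times,\sigt)$.

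The main obstacle is the rule $\beta\searrow(\beta\circ\delta_u)=\delta_u$. In general $\beta\searrow(\beta\circ u)$ only dominates $u$. So the reduction has to avoid computing $\searrow$ directly. Instead we use the second form of the composition, $g\odt f=(g\swarrow\beta)\circ f$, together with $g=v\circ\beta$. This gives $g\odt f=v\circ f$ whenever $g=v\circ\beta$ for any $v$: indeed $g\circ(\beta\searrow f)$ equals $v\circ\beta\circ(\beta\searrow f)=v\circ f$, since $\beta\circ(\beta\searrow f)=f$. This identity is independent of the choice of $v$ and needs no inverse. It handles all coherence composites because each structure morphism is literally $\delta_w\circ\beta$ for a bijection $w$.
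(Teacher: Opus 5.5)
Your proof is correct and follows the paper's route: the paper derives the corollary directly from the interchange law of Proposition \ref{prop:tensor-diag-compat}, and you additionally supply what it leaves implicit. That is, you give the associator, unitors and symmetry as graphs of $\mathbf{Set}$ bijections composed with the objects, together with the reduction $g\odt f=v\circ f$ for $g=v\circ\beta$ (and dually $g\odt f=g\circ u$ for $f=\beta\circ u$), which turns naturality and coherence into coherence in $(\mathbf{Set},\times,\sigt)$. Delete the intermediate claim $\beta\searrow(\beta\circ\delta_u)=\delta_u$: it is false in general, since the left side equals $(\beta\searrow\beta)\circ\delta_u$. The consequence $g\odt f=g\circ\delta_u$ you display still holds, and it follows directly from $g\odt f=(g\swarrow\beta)\circ f$.
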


We next introduce two important classes of well-behaved $Q$-relations, namely regular and idempotent $Q$-relations, which form full subcategories closed under the pointwise tensor product. Recall that, in any category $\mathbf{C}$, a morphism $f:A\to B$ is \emph{regular} if there exists a morphism $g:B\to A$ such that $f=f\circ g\circ f$. An endomorphism $f:A\to A$ is \emph{idempotent} if $f\circ f=f$.

In the specific setting of $\QRel$, for every regular $Q$-relation $\alpha:X\nto Y$, the associated $Q$-relation $\overleftarrow{\alpha}:=(\alpha\searrow\alpha)\swarrow\alpha$ is the greatest $Q$-relation satisfying $\alpha\circ\beta\circ\alpha\leq\alpha$. Accordingly, $\alpha$ is regular if and only if $\alpha=\alpha\circ\overleftarrow{\alpha}\circ\alpha$. We denote by \(\DQRel_{\mathrm{reg}}\) the full subcategory of \(\DQRel\) consisting of all regular \(Q\)-relations. Since every idempotent \(Q\)-relation is regular, \(\DQRel_{\mathrm{reg}}\) contains the full subcategory \(\DQRel_{\mathrm{idm}}\) of all idempotent \(Q\)-relations. A \(Q\)-relation \(\alpha\) is regular (respectively, idempotent) precisely when \(\alpha^\op\) is regular (respectively, idempotent). Hence \(\DQRel_{\mathrm{reg}}\) and \(\DQRel_{\mathrm{idm}}\) are closed under the opposite operation \((-)^\op\) on \(\DQRel\); consequently, both are self-dual.

\begin{proposition}[\cite{LaiShen2018}]\label{prop:reg-idm-equiv}
Let $\alpha\colon X\nto Y$ be a regular $Q$-relation. Then the $Q$-relation $\overleftarrow{\alpha}\circ\alpha$ is idempotent and isomorphic to $\alpha$ in $\DQRel$. Consequently, $\DQRel_{\mathrm{reg}}$ is categorically equivalent to its full subcategory $\DQRel_{\mathrm{idm}}$.
\end{proposition}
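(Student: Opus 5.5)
The plan is to write the two comparison morphisms down explicitly, using only $\alpha$ and $\overleftarrow{\alpha}$, and to check the identities with one general composition rule for $\DQRel$. Write $\varepsilon:=\overleftarrow{\alpha}\circ\alpha\colon X\nto X$, where $\overleftarrow{\alpha}\colon Y\nto X$. Regularity means $\alpha=\alpha\circ\overleftarrow{\alpha}\circ\alpha=\alpha\circ\varepsilon$.

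\emph{Idempotency.} Associativity gives
\[
\varepsilon\circ\varepsilon=\overleftarrow{\alpha}\circ(\alpha\circ\overleftarrow{\alpha}\circ\alpha)=\overleftarrow{\alpha}\circ\alpha=\varepsilon .
\]

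\emph{Composition rule.} Two facts about $\DQRel$ will be used throughout:
\begin{enumerate}[label=(\roman*)]
\item The identity on an object $\beta$ is $\beta$ itself.
\item If $f\colon\beta\to\gamma$ and $g\colon\gamma\to\theta$ are morphisms and $f=\gamma\circ u$ for some $Q$-relation $u$, then $g\odt f=g\circ u$.
\end{enumerate}
For (ii), use the defining equation $g=(g\swarrow\gamma)\circ\gamma$ of a $\DQRel$-morphism:
\[
g\odt f=(g\swarrow\gamma)\circ f=(g\swarrow\gamma)\circ\gamma\circ u=g\circ u .
\]
This reduces every composite below to an ordinary relational composite.

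\emph{The two morphisms.} Take $f:=\alpha\colon X\nto Y$ as a candidate morphism $\varepsilon\to\alpha$, and $g:=\varepsilon\colon X\nto X$ as a candidate morphism $\alpha\to\varepsilon$. Membership is checked with the factorisation criterion ($f=v\circ\text{source}=\text{target}\circ u$):
\begin{itemize}
\item $f=\alpha\circ\delta_X$ exhibits the target factorisation, and $f=\alpha\circ\varepsilon$ (regularity) exhibits the source factorisation with $v=\alpha$. Hence $f\in\DQRel(\varepsilon,\alpha)$.
\item $g=\overleftarrow{\alpha}\circ\alpha$ exhibits the source factorisation with $v=\overleftarrow{\alpha}$, and $g=\varepsilon\circ\delta_X$ (idempotency) exhibits the target factorisation. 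Hence $g\in\DQRel(\alpha,\varepsilon)$.
\end{itemize}

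\emph{Mutually inverse.} Apply (ii) with $u=\delta_X$ in both cases:
\[
f\odt g=f\circ\delta_X=\alpha=1_\alpha ,\qquad g\odt f=g\circ\delta_X=\varepsilon=1_\varepsilon .
\]
So $\alpha\cong\varepsilon$ in $\DQRel$.

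\emph{Equivalence.} $\DQRel_{\mathrm{idm}}$ is a full subcategory of $\DQRel_{\mathrm{reg}}$, so the inclusion functor is fully faithful. The isomorphism $\alpha\cong\overleftarrow{\alpha}\circ\alpha$ shows the inclusion is essentially surjective. A fully faithful, essentially surjective functor is an equivalence.

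The only step needing care is (ii), together with keeping track of which side each factorisation sits on. Once (ii) is in hand, the rest is bookkeeping from regularity and idempotency.
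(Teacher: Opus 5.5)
Your proof is correct. The paper gives no argument of its own here; it simply cites \cite{LaiShen2018}. Your explicit pair of mutually inverse diagonals, $\alpha\in\DQRel(\overleftarrow{\alpha}\circ\alpha,\alpha)$ and $\overleftarrow{\alpha}\circ\alpha\in\DQRel(\alpha,\overleftarrow{\alpha}\circ\alpha)$, checked with the factorisation criterion and your rule that $g\odt f=g\circ u$ whenever $f=\gamma\circ u$, is a complete, self-contained verification of what the paper takes from the literature.

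One cosmetic point: the target factorisation $\varepsilon=\varepsilon\circ\delta_X$ holds trivially and does not use idempotency. Idempotency is needed only to place $\overleftarrow{\alpha}\circ\alpha$ in $\DQRel_{\mathrm{idm}}$ for the essential-surjectivity step.
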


For morphisms between idempotent $Q$-relations, we establish a simplified intrinsic characterization of diagonal morphisms in $\DQRel$.

\begin{proposition}\label{prop:idm-morph}
Let $\alpha$ and $\beta$ be idempotent $Q$-relations. For any $Q$-relation $f$,
\[
f\in\DQRel(\alpha,\beta)
\iff f=\beta\circ f\circ\alpha.
\]
\end{proposition}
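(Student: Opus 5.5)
We prove the two implications separately, relying on the factorisation description of $\DQRel$-morphisms: $f\in\DQRel(\alpha,\beta)$ if and only if there are $Q$-relations $u,v$ with $v\circ\alpha=f=\beta\circ u$. Idempotency, $\alpha\circ\alpha=\alpha$ and $\beta\circ\beta=\beta$, lets us absorb an extra copy of $\alpha$ on the right and of $\beta$ on the left.

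\emph{($\Rightarrow$).} Let $f\in\DQRel(\alpha,\beta)$, and choose $u,v$ with $f=v\circ\alpha=\beta\circ u$. Then
\[
f\circ\alpha=v\circ\alpha\circ\alpha=v\circ\alpha=f,\qquad \beta\circ f=\beta\circ\beta\circ u=\beta\circ u=f.
\]
Combining the two identities gives $\beta\circ f\circ\alpha=\beta\circ f=f$.

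\emph{($\Leftarrow$).} Suppose $f=\beta\circ f\circ\alpha$. Put $v:=\beta\circ f$ and $u:=f\circ\alpha$. Then $v\circ\alpha=\beta\circ f\circ\alpha=f$ and $\beta\circ u=\beta\circ f\circ\alpha=f$. This is exactly the required two-sided factorisation, so $f\in\DQRel(\alpha,\beta)$.

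The only point needing care is the equivalence, stated in the definition of $\DQRel$, between the fixed-point condition $(f\swarrow\alpha)\circ\alpha=f=\beta\circ(\beta\searrow f)$ and the existence of factorisations. If one prefers to argue directly from the fixed-point form, the converse direction still works by adjunction. From $f=\beta\circ f\circ\alpha\le f$ we get $f\circ\alpha\le f$, hence $f\le f\swarrow\alpha$. Therefore $f=(\beta\circ f)\circ\alpha\le\ldots$; more simply, $(f\swarrow\alpha)\circ\alpha\le f$ holds always, while $f=(\beta\circ f)\circ\alpha$ with $\beta\circ f\le f\swarrow\alpha$ gives the reverse inequality. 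The $\beta$-side is symmetric: $\beta\circ(\beta\searrow f)\le f$ always holds, and $f\circ\alpha\le\beta\searrow f$ gives $f\le\beta\circ(\beta\searrow f)$. I expect no real obstacle; the argument is routine.
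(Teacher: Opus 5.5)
Your proof is correct and is essentially the paper's. For necessity you absorb the extra $\alpha$ and $\beta$ by idempotency; the paper does the same using the canonical factors $f\swarrow\alpha$ and $\beta\searrow f$ rather than arbitrary $u,v$. For sufficiency, which the paper calls trivial, you give the factorisation $v=\beta\circ f$, $u=f\circ\alpha$. In your fixed-point side remark, the abandoned first step ``we get $f\circ\alpha\le f$'' does not follow from $\beta\circ f\circ\alpha\le f$ alone; it needs idempotency of $\alpha$. The ``more simply'' version that follows it is fine.
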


\begin{proof}
The sufficiency is trivial. For necessity, assume $f\in\DQRel(\alpha,\beta)$. By the definition of diagonal morphisms, $(f\swarrow\alpha)\circ\alpha=f=\beta\circ(\beta\searrow f)$. Using the idempotency of $\beta$, we obtain
\[
\beta\circ f=\beta\circ\bigl(\beta\circ(\beta\searrow f)\bigr)
=\beta\circ(\beta\searrow f)=f.
\]
Similarly, the idempotency of $\alpha$ implies
\[
f\circ\alpha=\bigl((f\swarrow\alpha)\circ\alpha\bigr)\circ\alpha
=(f\swarrow\alpha)\circ\alpha=f.
\]
Combining the two identities yields $\beta\circ f\circ\alpha=f$.
\end{proof}

We further prove that regularity and idempotency of $Q$-relations are preserved under pointwise tensor products, which guarantees that the corresponding subcategories are monoidal subcategories of $\DQRel$.

\begin{proposition}\label{prop:tensor-regular}
The pointwise tensor product of two regular (respectively, idempotent) $Q$-relations is regular (respectively, idempotent).
\end{proposition}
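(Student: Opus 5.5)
The plan is to reduce everything to the interchange law of Proposition \ref{prop:tensor-compat}, applied twice.

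\emph{Idempotent case.} Let $\alpha\colon X\nto X$ and $\beta\colon X'\nto X'$ be idempotent, so $\alpha\circ\alpha=\alpha$ and $\beta\circ\beta=\beta$. Then $\alpha*\beta$ is an endo-$Q$-relation on $X\times X'$, and Proposition \ref{prop:tensor-compat} gives
\[
(\alpha*\beta)\circ(\alpha*\beta)=(\alpha\circ\alpha)*(\beta\circ\beta)=\alpha*\beta .
\]
So $\alpha*\beta$ is idempotent.

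\emph{Regular case.} Let $\alpha\colon X\nto Y$ and $\beta\colon X'\nto Y'$ be regular. Choose witnesses $g\colon Y\nto X$ and $g'\colon Y'\nto X'$ with $\alpha=\alpha\circ g\circ\alpha$ and $\beta=\beta\circ g'\circ\beta$; for instance $g=\overleftarrow{\alpha}$ and $g'=\overleftarrow{\beta}$. Then $g*g'\colon Y\times Y'\nto X\times X'$ is a candidate witness. Applying Proposition \ref{prop:tensor-compat} once to the pair $(g\circ\alpha,\; g'\circ\beta)$ and once more to the outer composite gives
\[
(\alpha*\beta)\circ(g*g')\circ(\alpha*\beta)=(\alpha\circ g\circ\alpha)*(\beta\circ g'\circ\beta)=\alpha*\beta .
\]
Hence $\alpha*\beta$ is regular. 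Associativity of relational composition lets us bracket the triple composite either way, so the double use of Proposition \ref{prop:tensor-compat} is legitimate.

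\emph{Obstacle.} There is essentially none. The only point to watch is that regularity is defined existentially, by some witness, rather than through the canonical $\overleftarrow{\alpha}$. We therefore only need to exhibit $g*g'$ as a witness. We do not need to show that $\overleftarrow{\alpha*\beta}=\overleftarrow{\alpha}*\overleftarrow{\beta}$, which may fail in general.

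Finally, since the identity object $\stk$ is idempotent, both classes are closed under $*$ and contain the unit. They are therefore monoidal full subcategories of $(\DQRel,*,\stk)$ from Corollary \ref{cor:d-monoidal}.
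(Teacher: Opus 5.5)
Your proof is correct and is essentially the paper's argument: both cases follow from two applications of Proposition \ref{prop:tensor-compat}. The only difference is that the paper uses the canonical witness $\overleftarrow{\alpha}*\overleftarrow{\beta}$, whereas you allow arbitrary witnesses $g*g'$, which is an inessential generalization.
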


\begin{proof}
First, let $\alpha$ and $\beta$ be regular $Q$-relations, so that $\alpha=\alpha\circ\overleftarrow{\alpha}\circ\alpha$ and $\beta=\beta\circ\overleftarrow{\beta}\circ\beta$. Applying Proposition~\ref{prop:tensor-compat} twice, we have
\[
\alpha*\beta
=(\alpha\circ\overleftarrow{\alpha}\circ\alpha)*(\beta\circ\overleftarrow{\beta}\circ\beta)
=(\alpha*\beta)\circ(\overleftarrow{\alpha}*\overleftarrow{\beta})\circ(\alpha*\beta),
\]
which verifies the regularity of $\alpha*\beta$.

Next, suppose $\alpha$ and $\beta$ are idempotent. Again by Proposition~\ref{prop:tensor-compat},
\[
(\alpha*\beta)\circ(\alpha*\beta)
=(\alpha\circ\alpha)*(\beta\circ\beta)=\alpha*\beta,
\]
so $\alpha*\beta$ is idempotent. This completes the proof.
\end{proof}

We proceed to establish the symmetric monoidal closed structure of $\DQRel_{\mathrm{idm}}$, which provides a quantale-enriched counterpart of a classical result (see \cite[Proposition 2]{RosebrughWood1994}).

\begin{proposition}\label{prop:idm-closed}
The category $(\DQRel_{\mathrm{idm}},*,\stk)$ is symmetric monoidal closed, with the closed structure induced by the adjunction $(-)*\alpha^\op\dashv\alpha*{(-)}$.
\end{proposition}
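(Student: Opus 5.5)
We need a natural bijection, for idempotent $\alpha\colon X\nto Y$, $\beta\colon X'\nto Y'$, $\gamma\colon X''\nto Y''$,
\[
\DQRel_{\mathrm{idm}}(\gamma*\alpha^\op,\beta)\cong\DQRel_{\mathrm{idm}}(\gamma,\alpha*\beta).
\]
First, $\DQRel_{\mathrm{idm}}$ is a monoidal subcategory. By Proposition \ref{prop:tensor-regular}, $\alpha*\beta$ and $\gamma*\alpha^\op$ are idempotent, and $\alpha^\op$ is idempotent whenever $\alpha$ is. The unit $\stk$ is the identity on a singleton, hence idempotent. Corollary \ref{cor:d-monoidal} then supplies the symmetric monoidal structure, so only closedness remains.

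By Proposition \ref{prop:idm-morph}, both hom-sets have a simple description. A morphism $f\colon\gamma*\alpha^\op\to\beta$ is a $Q$-relation $f\colon X''\times Y\nto Y'$ with
\[
f=\beta\circ f\circ(\gamma*\alpha^\op).
\]
A morphism $g\colon\gamma\to\alpha*\beta$ is a $Q$-relation $g\colon X''\nto Y\times Y'$ with
\[
g=(\alpha*\beta)\circ g\circ\gamma.
\]
The underlying sets match: $X''\times Y\times Y'$ in both cases, up to the symmetry of $\times$. So I would take the bijection to be plain currying, $\hat g((x'',y),y')=g(x'',(y,y'))$. This assignment is evidently bijective on all $Q$-relations, and it is natural in $\gamma$ and $\beta$ under composition with other relations.

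The main step is to show that currying carries one fixed-point condition to the other. Expanding pointwise gives
\[
\bigl((\alpha*\beta)\circ g\circ\gamma\bigr)(x'',(y,y'))=\bigvee_{w,y_1,y_1'}\gamma(x'',w)\,\&\,g(w,(y_1,y_1'))\,\&\,\alpha(y_1,y)\,\&\,\beta(y_1',y').
\]
Similarly,
\[
\bigl(\beta\circ\hat g\circ(\gamma*\alpha^\op)\bigr)((x'',y),y')=\bigvee_{w,y_1,y_1'}\gamma(x'',w)\,\&\,\alpha^\op(y,y_1)\,\&\,\hat g((w,y_1),y_1')\,\&\,\beta(y_1',y').
\]
Since $\alpha^\op(y,y_1)=\alpha(y_1,y)$ and $\&$ is commutative and associative, the two joins are identical. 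Hence the conditions coincide. This identification of $\alpha$ acting on the codomain factor with $\alpha^\op$ acting on the domain factor is the crux, and it is why the adjoint is $-*\alpha^\op$.

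Naturality is also needed. For idempotent objects, Proposition \ref{prop:idm-morph} shows that diagonal composition $g\odt h$ reduces to ordinary composition $g\circ h$: indeed, $g\circ(\beta\searrow h)\ge g\circ h$, and conversely $g\circ(\beta\searrow h)=(g\swarrow\beta)\circ\beta\circ(\beta\searrow h)=(g\swarrow\beta)\circ h$. Naturality in $\gamma$ then amounts to
\[
\widehat{g\circ h}=\hat g\circ(h*\alpha^\op).
\]
This holds because $\hat g\circ(h*\alpha^\op)=\hat g\circ(h*\delta_Y)\circ(\gamma*\alpha^\op)$-type rewritings, together with $\hat g$ absorbing $\alpha^\op$ on the $Y$-factor, reduce it to the same pointwise join. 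Naturality in $\beta$ follows by the same reindexing.

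The obstacle I anticipate is the bookkeeping in this naturality step. The functor $-*\alpha^\op$ sends $h$ to $h*\alpha^\op$, not to $h*\delta$, so the extra $\alpha^\op$ must be absorbed using $f\circ(\gamma*\alpha^\op)=f$ and $\alpha\circ\alpha=\alpha$. If a direct argument gets messy, I would verify the unit and counit instead. The unit $\eta_\gamma\colon\gamma\to\alpha*(\gamma*\alpha^\op)$ is the curried form of the identity morphism $\gamma*\alpha^\op$. The counit $\epsilon\colon(\alpha*\beta)*\alpha^\op\to\beta$ is given by $\epsilon(((y,y'),y''),y''')=\alpha(y'',y)\&\beta(y',y''')$. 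Each is a valid morphism by the pointwise criterion, and the triangle identities follow from the idempotency of $\alpha$ and $\beta$. Symmetry is inherited from Corollary \ref{cor:d-monoidal}.
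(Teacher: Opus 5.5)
Your argument is correct and is essentially the paper's proof. Both curry $f((y,x),z)\mapsto\widetilde f(y,(x,z))$, reduce membership in the hom-sets to the fixed-point equations of Proposition~\ref{prop:idm-morph}, and check pointwise that the two equations coincide; your naturality discussion (via $g\odt h=g\circ h$ between idempotent objects and the absorption $\hat g\circ(\delta_{X''}*\alpha^{\op})=\hat g$) supplies what the paper leaves implicit.

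There is one slip, in the unit/counit fallback you do not actually need. The counit should be the uncurried identity of $\alpha*\beta$, namely $\epsilon(((y,y'),y''),y''')=\alpha(y,y'')\,\&\,\beta(y',y''')$. Your transposed version with $\alpha(y'',y)$ generally fails the fixed-point condition, for example when $\alpha$ is a non-discrete order and $\beta=\stk$.
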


\begin{proof}
It suffices to establish a natural bijection
\[
\DQRel_{\mathrm{idm}}(\beta*\alpha^{\op},\gamma)
\cong
\DQRel_{\mathrm{idm}}(\beta,\alpha*\gamma)
\]
for all idempotent $Q$-relations $\alpha\colon X\nto X$, $\beta\colon Y\nto Y$, and $\gamma\colon Z\nto Z$.

Given $f\in\DQRel_{\mathrm{idm}}(\beta*\alpha^{\op},\gamma)$, define the $Q$-relation $\widetilde{f}\colon Y\nto X\times Z$ by $\widetilde{f}(y,(x,z))=f((y,x),z)$ for all $x\in X$, $y\in Y$, $z\in Z$. By Proposition~\ref{prop:idm-morph}, it suffices to show that $\widetilde{f}=(\alpha*\gamma)\circ\widetilde{f}\circ\beta$.

Direct pointwise computation yields
\[
\begin{aligned}
&\bigl((\alpha*\gamma)\circ\widetilde{f}\circ\beta\bigr)(y,(x,z))\\
&\quad=\bigvee_{y'\in Y}\bigvee_{x'\in X,\,z'\in Z}
(\alpha*\gamma)((x',z'),(x,z))
\mathbin{\&}\widetilde{f}(y',(x',z'))\mathbin{\&}\beta(y,y')\\
&\quad=\bigvee_{x'\in X,\,y'\in Y,\,z'\in Z}
\alpha^{\op}(x,x')\mathbin{\&}\gamma(z',z)\mathbin{\&}f((y',x'),z')\mathbin{\&}\beta(y,y')\\
&\quad=\bigvee_{y'\in Y,\,x'\in X}
\biggl(\bigvee_{z'\in Z}\gamma(z',z)\mathbin{\&}f((y',x'),z')\biggr)
\mathbin{\&}(\beta*\alpha^{\op})((y,x),(y',x'))\\
&\quad=(\gamma\circ f\circ(\beta*\alpha^{\op}))((y,x),z)\\
&\quad=f((y,x),z)\\
&\quad=\widetilde{f}(y,(x,z)),
\end{aligned}
\]
where the penultimate equality follows from the hypothesis $f\in\DQRel_{\mathrm{idm}}(\beta*\alpha^{\op},\gamma)$. 
\end{proof}

We now recall the definition of constructively completely distributive $Q$-categories from \cite{LaiShen2018,Stubbe2007b}, which constitute the essential image of the functor $\frD$ associated with regular $Q$-relations. A $Q$-category $\mathbb{A}$ is \emph{constructively completely distributive} (ccd for short) if there exists a chain of adjoint $Q$-functors
\[
T_{\mathbb{A}}\dashv S_{\mathbb{A}}\dashv Y_{\mathbb{A}}\colon
\mathbb{A}\to\mathcal{P}\mathbb{A}.
\]
This notion generalizes constructive complete distributivity of classical complete lattices \cite{Wood1990,KenneyWood2010,RosebrughWood1994} to the quantale-enriched categorical setting.

Let $\QSup_{\mathrm{ccd}}$ denote the full subcategory of $\QSup$ consisting of all ccd cocomplete $Q$-categories. Existing results in \cite{LaiShen2018,Stubbe2007b} confirm that the cocomplete $Q$-category $\mathfrak{D}\alpha$ associated with any regular $Q$-relation $\alpha$ is ccd. Conversely, every ccd $Q$-category can be realized as $\mathfrak{D}\theta$ for some idempotent $Q$-relation $\theta$ \cite{LaiShen2018,Stubbe2007b}.

Explicitly, for a ccd $Q$-category $\mathbb{A}$, its \emph{totally below distributor} \cite{Stubbe2007b} is defined as
\[
\theta_{\mathbb{A}}:=(T_{\mathbb{A}})^{\sharp}\circ(Y_{\mathbb{A}})_{\natural},
\]
where $(T_{\mathbb{A}})^{\sharp}$ and $(Y_{\mathbb{A}})_{\natural}$ stand for the cograph of $T_{\mathbb{A}}$ and the graph of $Y_{\mathbb{A}}$, respectively. By construction, $\theta_{\mathbb{A}}\colon\mathbb{A}\nto\mathbb{A}$ is a $Q$-distributor with the pointwise expression
\[
\theta_{\mathbb{A}}(x,y)=\mathcal{P}\mathbb{A}(Y_{\mathbb{A}}(x),T_{\mathbb{A}}(y))=T_{\mathbb{A}}(y)(x).
\]
This distributor satisfies $\theta_{\mathbb{A}}\leq\mathbb{A}$ and $\theta_{\mathbb{A}}\circ\theta_{\mathbb{A}}=\theta_{\mathbb{A}}$, hence is idempotent. Most importantly, it admits a canonical isomorphism $\mathfrak{D}\theta_{\mathbb{A}}\cong\mathbb{A}$.

\begin{proposition}[\cite{Stubbe2005b,Stubbe2007b}]\label{prop:idm-hom}
Let \(\alpha\) and \(\beta\) be idempotent \(Q\)-relations. Then there is a natural isomorphism
$
\QSup(\mathfrak{D}\alpha,\mathfrak{D}\beta)
\cong \DQRel(\beta,\alpha).
$
Moreover, \(\mathfrak{D}\alpha\cong\mathfrak{D}\beta\) in \(Q\text{-}\mathbf{Cat}\) if and only if \(\alpha\cong\beta\) in \(\DQRel\).
\end{proposition}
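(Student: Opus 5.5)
We prove Proposition \ref{prop:idm-hom} by computing both sides explicitly and then matching them through the functor $\frD$. Let $\alpha\colon X\nto X$ and $\beta\colon Y\nto Y$ be idempotent.

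\textbf{Step 1: describe $\frD\alpha$.} By the defining fixed-point condition, $\mu\colon X\nto\sigt$ lies in $\frD\alpha$ iff $\mu=(\mu\swarrow\alpha)\circ\alpha$. Proposition \ref{prop:idm-morph}, applied with target $\stk$, reduces this to $\mu=\mu\circ\alpha$. So $\frD\alpha$ is the set of $\alpha$-invariant maps $X\to Q$, with joins and $q\&(-)$ computed pointwise.

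The key observation is that $\frD\alpha$ is generated under joins and scalar action by the representables $\alpha(-,x)$, $x\in X$. Indeed, $\alpha(-,x)\circ\alpha=\alpha(-,x)$ by idempotency, and every $\mu\in\frD\alpha$ satisfies
\[
\mu=\mu\circ\alpha=\bigvee_{x}\mu(x)\&\alpha(-,x).
\]

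\textbf{Step 2: construct the comparison maps.} Direction $\DQRel(\beta,\alpha)\to\QSup(\frD\alpha,\frD\beta)$ is just $f\mapsto\frD f=(-)\odt f$. By Proposition \ref{prop:d-enriched} this lands in left adjoints. For idempotent objects we have $\lambda\odt f=\lambda\circ f$, since
\[
(\lambda\swarrow\alpha)\circ f=(\lambda\swarrow\alpha)\circ\alpha\circ f=\lambda\circ f.
\]

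Conversely, given a left adjoint $F\colon\frD\alpha\to\frD\beta$, define
\[
f\colon Y\nto X,\qquad f(-,x):=F(\alpha(-,x)).
\]
Each column of $f$ is $\beta$-invariant, so $f=f\circ\beta$. Moreover,
\[
\alpha\circ f(-,x)=\bigvee_{x'}\alpha(x',x)\&F(\alpha(-,x'))=F\Bigl(\bigvee_{x'}\alpha(x',x)\&\alpha(-,x')\Bigr)=F(\alpha(-,x)),
\]
using Proposition \ref{prop:adjoint-char}(1) and $\alpha\circ\alpha=\alpha$. Hence $f=\alpha\circ f\circ\beta$, so $f\in\DQRel(\beta,\alpha)$ by Proposition \ref{prop:idm-morph}.

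\textbf{Step 3: check the two maps are mutually inverse.} First, for $f\in\DQRel(\beta,\alpha)$ we get $(\frD f)(\alpha(-,x))=\alpha(-,x)\circ f=f(-,x)$, using $\alpha\circ f=f$. This recovers $f$.

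Second, take a left adjoint $F$ and let $f$ be as in Step 2. Then $\frD f$ and $F$ agree on the generators $\alpha(-,x)$. Both preserve joins and scalar actions, so by Step 1 they agree everywhere.

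The bijection preserves the pointwise joins and $Q$-actions, so it is an isomorphism of $Q$-categories. Naturality is inherited from the functoriality of $\frD$.

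\textbf{Step 4: the isomorphism criterion.} Suppose $\frD\alpha\cong\frD\beta$ in $Q\text{-}\mathbf{Cat}$. Between skeletal cocomplete $Q$-categories, an isomorphism and its inverse are left adjoints, so they are mutually inverse morphisms of $\QSup$. Transport them through Step 3; functoriality, together with the fact that $\frD$ is faithful (Proposition \ref{prop:faithful}), gives mutually inverse diagonal morphisms. The converse direction is immediate from functoriality of $\frD$.

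\textbf{Main obstacle.} The delicate step is Step 2's verification that $f$ is a genuine diagonal morphism. This hinges on two facts: left adjoints preserve weighted joins (Proposition \ref{prop:adjoint-char}), and idempotency makes $\alpha(-,x)$ itself invariant. Checking the orientation conventions of $\circ$, and whether one should use $\alpha(-,x)$ or $\alpha(x,-)$, is where I expect most of the care to go.
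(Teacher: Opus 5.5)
Your proof is correct. The orientation worries you flag at the end resolve in your favour: $\alpha(-,x)\circ\alpha=(\alpha\circ\alpha)(-,x)$, $(\alpha\circ f)(-,x)=\bigvee_{x'}\alpha(x',x)\& f(-,x')$, and $\alpha(-,x)\circ f=(\alpha\circ f)(-,x)$. So $\alpha(-,x)$ is the right choice of generator, and each computation in Steps 1--3 goes through as written.

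The paper gives no argument of its own for this proposition; it imports it from Stubbe. There it comes out of the theory of regular semicategories and regular presheaves. For idempotent $\alpha\colon X\nto X$, $\frD\alpha$ is the category of regular presheaves on $(X,\alpha)$, i.e.\ the $\mu$ with $\mu\circ\alpha=\mu$. The hom-isomorphism is then an instance of the universal property of that construction: cocontinuous functors correspond to relations absorbing the idempotents on both sides, which by Proposition~\ref{prop:idm-morph} are exactly the diagonals.

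Your route unpacks that universal property by hand:
\begin{itemize}
\item Step 1 is the density of the representables $\alpha(-,x)$ in $\frD\alpha$.
\item Steps 2--3 show that a left adjoint is determined by its values on these generators, and that those values assemble into a diagonal $f$ with $\frD f=F$.
\end{itemize}
What this buys is a self-contained proof using only Propositions~\ref{prop:d-enriched}, \ref{prop:adjoint-char} and \ref{prop:idm-morph}, with the explicit inverse $F\mapsto f$, $f(-,x)=F(\alpha(-,x))$. The cited route buys generality (arbitrary quantaloids). It also places the statement inside the characterisation of ccd categories via totally-below relations that the paper uses immediately afterwards.

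One small simplification: in Step 4 you do not need Proposition~\ref{prop:faithful}. Step 3 already shows that $f\mapsto\frD f$ is injective on $\DQRel(\alpha,\alpha)$ and $\DQRel(\beta,\beta)$. That is enough to get $f\odt g=1_\alpha$ and $g\odt f=1_\beta$ from $\frD g\circ\frD f=\mathrm{id}$ and $\frD f\circ\frD g=\mathrm{id}$.
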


The above proposition establishes a direct categorical equivalence between idempotent $Q$-relations and ccd cocomplete $Q$-categories.

\begin{corollary}\label{cor:D-equivalence}
Restricting the functor $\mathfrak{D}$ yields a category equivalence between $\DQRel_{\mathrm{idm}}^\op$ and $\QSup_{\mathrm{ccd}}$.
\end{corollary}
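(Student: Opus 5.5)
To prove Corollary~\ref{cor:D-equivalence}, I would check the three standard ingredients for an equivalence: the restricted functor lands in $\QSup_{\mathrm{ccd}}$, it is fully faithful, and it is essentially surjective.

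\textbf{Well-definedness.} Let $\mathfrak{D}_{\mathrm{idm}}$ denote the restriction of $\mathfrak{D}\colon\DQRel^\op\to\QSup$ to the full subcategory $\DQRel_{\mathrm{idm}}^\op$. Every idempotent $Q$-relation is regular. By the results recalled from \cite{LaiShen2018,Stubbe2007b}, $\mathfrak{D}\alpha$ is ccd for every regular $\alpha$. Since $\QSup_{\mathrm{ccd}}$ is a full subcategory of $\QSup$, the functor $\mathfrak{D}_{\mathrm{idm}}$ therefore factors through $\QSup_{\mathrm{ccd}}$. Its morphism action is still $f\mapsto \mathfrak{D}f=(-)\odt f$, so functoriality is inherited from $\mathfrak{D}$.

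\textbf{Full faithfulness.} Faithfulness already holds for all of $\DQRel^\op$ by Proposition~\ref{prop:faithful}. For fullness I would use Proposition~\ref{prop:idm-hom}, which gives a natural bijection
\[
\QSup(\mathfrak{D}\alpha,\mathfrak{D}\beta)\cong\DQRel(\beta,\alpha)
\]
for idempotent $\alpha,\beta$. The main obstacle is checking that this bijection is the map $f\mapsto\mathfrak{D}f$ induced by the functor, and not some other isomorphism. I would settle this with a cardinality argument. The map $f\mapsto \mathfrak{D}f$ from $\DQRel(\beta,\alpha)$ to $\QSup(\mathfrak{D}\alpha,\mathfrak{D}\beta)$ is injective by faithfulness, and the two sets are in bijection. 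This argument alone fails when the sets are infinite, however.

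So I would argue directly instead. Given a left adjoint $\Phi\colon\mathfrak{D}\alpha\to\mathfrak{D}\beta$, with $\alpha\colon X\nto X$ and $\beta\colon Y\nto Y$, define $f\colon Y\nto X$ by
\[
f(y,x)=\Phi(\alpha(-,x))(y).
\]
This makes sense because $\alpha(-,x)\in\mathfrak{D}\alpha$: by Proposition~\ref{prop:idm-morph}, each column $\alpha(-,x)$ satisfies $\alpha(-,x)=\delta(-,x)\circ\alpha$ with $\alpha$ idempotent. One then checks three things.
\begin{enumerate}
\item $f=\alpha\circ f\circ\beta$. This follows from $\Phi(\alpha(-,x))\in\mathfrak{D}\beta$ together with the identity $\alpha(-,x)=\bigvee_{x'}\alpha(x',x)\cdot\alpha(-,x')$ (idempotency), using that $\Phi$ preserves joins and scalars (Proposition~\ref{prop:adjoint-char}).
\item Every $\lambda\in\mathfrak{D}\alpha$ satisfies $\lambda=\lambda\circ\alpha=\bigvee_x\lambda(x)\cdot\alpha(-,x)$ by Proposition~\ref{prop:idm-morph}.
\item Hence $\Phi(\lambda)=\lambda\circ f$. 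Since $\alpha\circ(\alpha\searrow f)=f$, this equals $\lambda\odt f=\mathfrak{D}f(\lambda)$.
\end{enumerate}
This gives fullness.

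\textbf{Essential surjectivity.} Let $\mathbb{A}$ be a ccd $Q$-category, and take its totally below distributor $\theta_{\mathbb{A}}$. It is idempotent and satisfies $\mathfrak{D}\theta_{\mathbb{A}}\cong\mathbb{A}$. This isomorphism is an isomorphism of $Q$-categories between skeletal cocomplete ones, hence it lies in $\QSup$. A fully faithful, essentially surjective functor is an equivalence, which proves the corollary.
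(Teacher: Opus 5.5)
Your proof is correct, and its skeleton matches the paper's. The paper obtains the corollary directly from three ingredients:
\begin{itemize}
\item Proposition~\ref{prop:idm-hom};
\item the cited fact that $\frD\alpha$ is ccd for every regular $\alpha$;
\item the cited fact that every ccd $\bbA$ is isomorphic to $\frD\theta_\bbA$ with $\theta_\bbA$ idempotent.
\end{itemize}
You use the last two in the same way, for well-definedness and for essential surjectivity.

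The difference is in the fully faithful part. The paper reads full faithfulness straight off the natural isomorphism $\QSup(\frD\alpha,\frD\beta)\cong\DQRel(\beta,\alpha)$. In doing so it tacitly identifies that isomorphism with the morphism action $f\mapsto\frD f$. You rightly note that an abstract bijection does not give this, and that the cardinality trick fails for infinite hom-sets. You therefore split the step:
\begin{itemize}
\item faithfulness comes from Proposition~\ref{prop:faithful};
\item fullness is proved by hand.
\end{itemize}
For fullness, $\lambda=\lambda\circ\alpha$ for idempotent $\alpha$ shows that the columns $\alpha(-,x)$ generate $\frD\alpha$ under joins and scalar multiples. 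So a left adjoint $\Phi$ is determined by $f(y,x)=\Phi(\alpha(-,x))(y)$, and you then check $f=\alpha\circ f\circ\beta$ and $\Phi=(-)\odt f$.

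These checks go through. Two points are used implicitly and are worth stating:
\begin{itemize}
\item Joins and scalar actions in $\frD\alpha$ are the pointwise ones in $Q^X$. This holds because $\frD\alpha$ is exactly the image of $(-)\circ\alpha$, which preserves joins and $q\&(-)$.
\item The last identification is $\lambda\odt f=\lambda\circ(\alpha\searrow f)=\lambda\circ\alpha\circ(\alpha\searrow f)=\lambda\circ f$, which uses both $\lambda=\lambda\circ\alpha$ and $\alpha\circ(\alpha\searrow f)=f$.
\end{itemize}
Also, $\alpha(-,x)\in\frD\alpha$ holds for any $\alpha$, since $\alpha(-,x)=\delta_X(-,x)\circ\alpha$; idempotency is not needed there.

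Your route gives a self-contained, explicit inverse to $f\mapsto\frD f$ on idempotent relations. That is exactly what an equivalence requires, and the paper leaves it buried in the citation. The paper's route is shorter but depends on that implicit identification.
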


The following commutative diagram summarizes the hierarchical relationships among the aforementioned categories:
\[
\begin{tikzcd}
\DQRel_{\mathrm{idm}}^{\op}
\arrow[>->]{rr}{} \arrow{d}[swap]{\simeq} &&
\DQRel^{\op} \arrow{d}{\mathfrak{D}} \\
\QSup_{\mathrm{ccd}}\arrow[>->]{rr}{}&&
 \QSup
\end{tikzcd}
\]

By Proposition~\ref{prop:idm-closed}, \(\DQRel_{\mathrm{idm}}\) is symmetric monoidal closed. Since \(\DQRel_{\mathrm{idm}}\) is self-dual, its opposite category \(\DQRel_{\mathrm{idm}}^\op\) is symmetric monoidal closed as well. The equivalence functor $\mathfrak{D}$ thus transports this monoidal closed structure to $\QSup_{\mathrm{ccd}}$. We prove in the following that the transported tensor product coincides exactly with the standard tensor product $\otimes$ inherited from the ambient category $\QSup$. It is a quantale-enriched counterpart of the classical result on constructively completely distributive lattices (see \cite[Theorem 6.3]{KenneyWood2010}).

\begin{theorem}\label{thm:main}
Let $\alpha$ and $\beta$ be regular $Q$-relations. There exists a natural isomorphism
\[
\mathfrak{D}\alpha\otimes\mathfrak{D}\beta\cong\mathfrak{D}(\alpha*\beta)
\]
in the category $\QSup$. In particular, the tensor product of two constructively completely distributive cocomplete $Q$-categories remains constructively completely distributive.
\end{theorem}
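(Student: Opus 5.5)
First reduce to idempotent relations. By Proposition~\ref{prop:reg-idm-equiv}, each regular $\alpha$ is isomorphic in $\DQRel$ to the idempotent $\overleftarrow{\alpha}\circ\alpha$, and likewise for $\beta$. By Proposition~\ref{prop:tensor-diag-compat}, the pointwise tensor product is functorial on $\DQRel$, so it preserves these isomorphisms; hence $\alpha*\beta\cong(\overleftarrow{\alpha}\circ\alpha)*(\overleftarrow{\beta}\circ\beta)$ in $\DQRel$. Applying $\mathfrak{D}$ on both sides, it therefore suffices to prove $\mathfrak{D}\alpha\otimes\mathfrak{D}\beta\cong\mathfrak{D}(\alpha*\beta)$ for idempotent $\alpha\colon X\nto X$ and $\beta\colon Y\nto Y$. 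By Proposition~\ref{prop:tensor-regular}, $\alpha*\beta$ is again idempotent.

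For idempotents we use the Yoneda principle in $\QSup$: an object $\mathbb{A}$ is determined up to natural isomorphism by the functor $\QSup(\mathbb{A},-)$. We first compute this functor on ccd objects. For an idempotent $\gamma$, combining Proposition~\ref{prop:ccat-monoidal}(3), Proposition~\ref{prop:idm-hom} (used twice) and Proposition~\ref{prop:idm-closed} gives
\begin{align*}
\QSup(\mathfrak{D}\alpha\otimes\mathfrak{D}\beta,\mathfrak{D}\gamma)
&\cong\QSup(\mathfrak{D}\beta,\QSup(\mathfrak{D}\alpha,\mathfrak{D}\gamma))
\cong\QSup(\mathfrak{D}\beta,\mathfrak{D}(\alpha^\op*\gamma))\\
&\cong\DQRel(\alpha^\op*\gamma,\beta)
\cong\DQRel(\gamma,\alpha*\beta)
\cong\QSup(\mathfrak{D}(\alpha*\beta),\mathfrak{D}\gamma).
\end{align*}
The second step needs the internal-hom identification $\QSup(\mathfrak{D}\alpha,\mathfrak{D}\gamma)\cong\mathfrak{D}(\alpha^\op*\gamma)$. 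To prove it, note that the objects of $\mathfrak{D}(\alpha^\op*\gamma)$ are relations $X\times Z\nto\sigt$ fixed under pre-composition with $\alpha^\op*\gamma$; these are exactly the relations $Z\nto X$ of the form $\alpha\circ f\circ\gamma=f$, after transposing variables as in Proposition~\ref{prop:idm-closed}. By Proposition~\ref{prop:idm-morph}, these are the elements of $\DQRel(\gamma,\alpha)\cong\QSup(\mathfrak{D}\alpha,\mathfrak{D}\gamma)$. One must also check that homs match: both sides carry the pointwise scalar action $q\,\&\,f$, so they agree as $Q$-categories. A symmetric use of the closed structure, with $\alpha$ and $\alpha^\op$ swapped as the self-duality of $\DQRel_{\mathrm{idm}}$ allows, handles the fourth step.

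The chain above shows only that the two objects corepresent the same functor on $\QSup_{\mathrm{ccd}}$, which is not yet enough for Yoneda in all of $\QSup$. This is the main obstacle. To get around it, we evaluate on an arbitrary $\mathbb{C}$ by writing $\mathbb{C}\cong\mathfrak{B}\gamma$, using Proposition~\ref{prop:b-equivalence}. Alternatively, we use Proposition~\ref{prop:ccat-monoidal}(4), $\mathbb{A}\otimes\mathbb{B}\cong\QSup(\mathbb{A},\mathbb{B}^\op)^\op$, and compute directly:
\[
\QSup(\mathfrak{D}\alpha,(\mathfrak{D}\beta)^\op)^\op .
\]
Both $\mathfrak{D}\alpha$ and $(\mathfrak{D}\beta)^\op$ are ccd, since $\mathfrak{D}\beta^\op\cong(\mathfrak{D}\beta)^\op$ for idempotent $\beta$; this is the point where we must verify that ccd is self-dual for such objects. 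Granting that, the internal-hom identification above gives
\[
\QSup(\mathfrak{D}\alpha,\mathfrak{D}\beta^\op)^\op\cong\mathfrak{D}(\alpha^\op*\beta^\op)^\op\cong\mathfrak{D}(\alpha*\beta),
\]
where the last isomorphism uses self-duality of $\mathfrak{D}$ on idempotents once more.

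I would try this second route first, because it avoids Yoneda entirely and yields an explicit comparison map $\mathfrak{D}\alpha\times\mathfrak{D}\beta\to\mathfrak{D}(\alpha*\beta)$, $(\mu,\nu)\mapsto\mu*\nu$. That map is evidently a $\QSup$-bimorphism, since $*$ preserves joins and the scalar action in each variable, and naturality follows from Proposition~\ref{prop:tensor-diag-compat}. Finally, since $\mathfrak{D}(\alpha*\beta)$ comes from the idempotent $\alpha*\beta$, it lies in the essential image of Corollary~\ref{cor:D-equivalence}, so the tensor product of two ccd $Q$-categories is again ccd.
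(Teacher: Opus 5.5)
Your reduction to idempotents is correct, and so is the corepresentability chain over $\QSup_{\mathrm{ccd}}$. You are also right that this chain alone does not determine $\frD\alpha\otimes\frD\beta$. The gap is in the route you then rely on. You claim $\frD(\beta^\op)\cong(\frD\beta)^\op$ for idempotent $\beta$, and you use it twice: once to make $(\frD\beta)^\op$ ccd, and again to pass from $\frD(\alpha^\op*\beta^\op)^\op$ to $\frD(\alpha*\beta)$. This claim is false for a general $Q$.

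The self-duality of $\DQRel_{\mathrm{idm}}$ preserves the action $q\,\&\,(-)$ on hom-objects. So it only gives $\frD(\beta^\op)=\DQRel(\beta^\op,\stk)\cong\DQRel(\stk,\beta)\cong\QSup(\frD\beta,\bbQ)$. By contrast, Proposition~\ref{prop:ccat-monoidal} gives $(\frD\beta)^\op\cong\QSup(\frD\beta,\bbQ^\op)$. For $\beta=\stk$ your claim reads $\bbQ\cong\bbQ^\op$. This holds for Girard $Q$ but fails for the Lawvere quantale. There $\bbQ(p,q)=q\ominus p$ (truncated subtraction, with $\infty\ominus\infty=0$), so an isomorphism $\phi\colon\bbQ\to\bbQ^\op$ would need $q\ominus p=\phi(p)\ominus\phi(q)$. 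Taking $p=0,q=\infty$ forces $\phi(0)=\infty$. Then $p=0,q=1$ gives $1=\infty\ominus\phi(1)\in\{0,\infty\}$, a contradiction. In fact $\bbQ^\op$ is not even ccd there.

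With $\alpha=\beta=\stk$, your intermediate isomorphism $\QSup(\frD\alpha,(\frD\beta)^\op)\cong\frD(\alpha^\op*\beta^\op)$ becomes $\bbQ^\op\cong\bbQ$. The two misidentifications cancel, which is why your end result looks right. Route (a) through $\frB\gamma$ is only named. Nothing available computes $\QSup(\frD\alpha,\frB\gamma)$ in relational terms, and that computation is precisely the missing work. Similarly, $(\mu,\nu)\mapsto\mu*\nu$ is a bimorphism and gives a comparison map, but you give no argument that the induced map is invertible.

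The missing idea is to compute morphisms from a ccd object into an arbitrary, possibly non-ccd target, here $\bbQ^\op$, using only the structure of the domain. The paper does this as follows:
\begin{enumerate}[label=(\arabic*)]
\item It identifies $\bbA\otimes\bbB$ with the $\QSup$-bimorphisms $u\colon\bbA\times\bbB\to\bbQ^\op$.
\item It expands $x=S_\bbA T_\bbA x=\bigvee_{x'}\theta_\bbA(x',x)\cdot x'$, and likewise for $y$.
\item It uses that $u$ turns joins into meets and $q\cdot(-)$ into $q\ra(-)$ in each variable, obtaining $u=u\swarrow(\theta_\bbA*\theta_\bbB)$.
\item Conversely, every $f\swarrow(\theta_\bbA*\theta_\bbB)$ is a bimorphism.
\end{enumerate}
Hence the bimorphisms form the image of $(-)\swarrow\theta$. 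This image is isomorphic to $\frD\theta$ because $(-)\circ\theta\dashv(-)\swarrow\theta$. Only the ccd property of the two factors is used, never that of the target.

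You can run the same argument with your idempotent representatives. For idempotent $\alpha$, every $\mu\in\frD\alpha$ equals $\bigvee_x\mu(x)\,\&\,\alpha(-,x)$, and each $\alpha(-,x)$ lies in $\frD\alpha$. So a bimorphism into $\bbQ^\op$ is determined by its values on pairs $(\alpha(-,x),\beta(-,y))$, subject to a fixed-point condition with respect to $\alpha*\beta$.
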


\begin{proof}
Let $\bbA=\frD\alpha$ and $\bbB=\frD\beta$, so that both $\bbA$ and $\bbB$ are ccd cocomplete $Q$-categories. As noted above, every ccd $Q$-category is isomorphic to the $\mathfrak{D}$-image of its associated idempotent totally-below distributor. Accordingly, there exist idempotent distributors $\theta_{\bbA}$ and $\theta_{\bbB}$ (the totally below relations of $\bbA$ and $\bbB$) such that $\frD\theta_{\bbA}\cong\bbA$ and $\frD\theta_{\bbB}\cong\bbB$. It suffices to verify $\frD(\theta_{\bbA}*\theta_{\bbB})\cong\bbA\otimes\bbB$.

By the symmetric monoidal closed structure of $\QSup$, we have the canonical isomorphism
\[
\bbA\otimes\bbB\cong(\bbA\otimes\bbB)\otimes \bbQ\cong\QSup(\bbA\otimes\bbB,\bbQ^{\op})^{\op},
\]
which identifies the objects of $\bbA\otimes\bbB$ with $\QSup$-bimorphisms $\bbA\times\bbB\to\bbQ^{\op}$. We show that such bimorphisms bijectively correspond to the elements of $\frD(\theta_{\bbA}*\theta_{\bbB})$.

Set $\theta=\theta_{\bbA}*\theta_{\bbB}$. For any $\QSup$-bimorphism $u\colon\bbA\times\bbB\to \bbQ^{\op}$ and all $x\in\bbA_0$, $y\in\bbB_0$, the ccd property of $\bbA$ and $\bbB$ guarantees $x=S_{\bbA}(T_{\bbA}x)$ and $y=S_{\bbB}(T_{\bbB}y)$. Expanding these adjoint representations gives
\begin{align*}
u(x,y)&=u\bigl(S_{\bbA}(T_{\bbA}x),S_{\bbB}(T_{\bbB}y)\bigr)\\
&=u\biggl(\bigvee_{x'\in\bbA_0}T_{\bbA}x(x')\cdot x',\,
\bigvee_{y'\in\bbB_0}T_{\bbB}y(y')\cdot y'\biggr)\\
&=\bigwedge_{x'\in\bbA_0}\theta_{\bbA}(x',x)\ra
\biggl(\bigwedge_{y'\in\bbB_0}\theta_{\bbB}(y',y)\ra u(x',y')\biggr)\\
&=\bigwedge_{x'\in\bbA_0,\,y'\in\bbB_0}
\bigl(\theta_{\bbA}(x',x)\mathbin{\&}\theta_{\bbB}(y',y)\bigr)\ra u(x',y')\\
&=(u\swarrow\theta)(x,y).
\end{align*}
The third equality follows from the bimorphism property of $u$: the hom functor on $\bbQ^{\op}$ interchanges joins and meets and converts the scalar action $q\cdot z$ into scalar implication $q\ra z$. This computation implies $u=u\swarrow\theta$, so $u$ lies in the image of the lifting operation $(-)\swarrow\theta\colon \QRel(\bbA_0\times\bbB_0,\sigt)\to\QRel(\bbA_0\times\bbB_0,\sigt)$.

Conversely, for any $Q$-relation $f\colon\bbA_0\times\bbB_0\nto\sigt$, define $u=f\swarrow\theta\colon\bbA_0\times\bbB_0\nto\sigt$. By definition of the lifting operator,
\[
(f\swarrow\theta)(x,y)=f\swarrow(T_\bbA(x)\ast T_\bbB(y)).
\]
This induces the commutative diagram
\[
\begin{tikzcd}
\bbA\times\bbB
\arrow{rr}{u=f\swarrow\theta} \arrow{dr}[swap]{T_{\bbA}(-)*T_{\bbB}(-)} &&
\bbQ^{\op} \\
&\mathcal{P}\delta_{\bbA_0\times\bbB_0}
\arrow{ur}[swap]{f\swarrow(-)} &
\end{tikzcd}
\]

Here, $T_{\bbA}(-)*T_{\bbB}(-)$ is a $\QSup$-bimorphism as the pointwise tensor product of adjoint functors, while $f\swarrow(-)$ is a $\QSup$-morphism. Their composite $u$ is therefore a $\QSup$-bimorphism.

The above arguments demonstrate that the set $\mathbf{bi}\text{-}\QSup(\bbA\times\bbB,\bbQ^\op)^\op$ of all $\QSup$-bimorphisms from $\bbA\times\bbB$ to $\bbQ^\op$ coincides exactly with the image of the $Q$-functor $(-)\swarrow\theta$. Since $(-)\swarrow\theta$ is right adjoint to $(-)\circ\theta$, its image is isomorphic to $\frD\theta$. We thus obtain the natural isomorphism
\[
\frD(\theta_{\bbA}*\theta_{\bbB})=\frD\theta
\cong\mathbf{bi}\text{-}\QSup(\bbA\times\bbB,\bbQ^{\op})^{\op}
\cong\bbA\otimes\bbB.
\]

By Proposition \ref{prop:idm-hom}, $\theta_\bbA\cong\alpha$ and $\theta_\bbB\cong\beta$ in $\DQRel_\mathrm{reg}$, which yields
\[
\frD\alpha\otimes\frD\beta=\bbA\otimes\bbB\cong\frD(\theta_\bbA\ast\theta_\bbB)\cong\frD(\alpha*\beta).
\]
By Proposition~\ref{prop:tensor-regular}, $\alpha*\beta$ is regular, so $\frD(\alpha*\beta)$ is ccd. Consequently, the tensor product of two ccd cocomplete $Q$-categories remains ccd.
\end{proof}

\begin{remark}
Let \(Q\) be a Girard quantale. By the complementation isomorphism between \(\DQRel\) and \(\BQRel\) (Lemma~\ref{lem:girard-b}), we have \(\frB\alpha=\frD(\neg\alpha)\) for every \(Q\)-relation \(\alpha\). Moreover, \(\frD(\neg\alpha)\) is constructively completely distributive (ccd) if and only if \(\neg\alpha\) is regular (see \cite{LaiShen2018}). Hence, for a \(Q\)-relation \(\alpha\colon X\nto Y\), the formal concept lattice \(\frB\alpha\) is ccd if and only if \(\neg\alpha\) is regular.

If both \(\neg\alpha\) and \(\neg\beta\) are regular, then
\[
\mathfrak{B}\alpha\otimes\mathfrak{B}\beta
\cong\mathfrak{D}(\neg\alpha)\otimes\mathfrak{D}(\neg\beta)
\cong\mathfrak{D}((\neg\alpha)*(\neg\beta))
= \frB(\neg((\neg\alpha)*(\neg\beta))).
\]
In particular, in the classical case \(Q=\{0,1\}\), the formal concept lattice \(\frB(X,Y,\alpha)\) is ccd if and only if \(\neg\alpha\) is regular. For contexts \((X,Y,\alpha)\) and \((X',Y',\beta)\),
\[
((x,y),(x',y'))\in \neg((\neg\alpha)*(\neg\beta)) 
\iff (x,y)\in\alpha \text{ or } (x',y')\in\beta.
\]
Thus \((X\times X',Y\times Y',\neg((\neg\alpha)*(\neg\beta)))\) is the direct product of \((X,Y,\alpha)\) and \((X',Y',\beta)\) (see \cite[Section 1.3, Definition 32]{Ganter2024}). Therefore, Theorem \ref{thm:main} recovers Theorem 33 in \cite[Section 4.4]{Ganter2024}.
\end{remark}

We further prove that the exponential object of two ccd $Q$-categories $\bbA$ and $\bbB$ in $\QSup$ is also ccd. Proposition~\ref{prop:idm-closed} provides the natural bijection
\[
\mathbf{D}(\QRel)(\theta_\bbB\ast\theta_{\bbA}^{\op},\star_{\mathrm{k}})
\cong\mathbf{D}(\QRel)(\theta_{\mathbb{B}},\star_{\mathrm{k}}*\theta_{\mathbb{A}})\cong \mathbf{D}(\QRel)(\theta_{\mathbb{B}},\theta_{\mathbb{A}}),
\]
which implies
\[
\mathfrak{D}(\theta_\bbB\ast\theta_{\bbA}^{\op})
\cong\QSup(\mathfrak{D}\theta_{\mathbb{A}},\mathfrak{D}\theta_{\mathbb{B}})
\cong\QSup(\mathbb{A},\mathbb{B}).
\]

Accordingly, the internal hom $\mathbb{B}^{\mathbb{A}}$ of $\bbA$ and $\bbB$ in $\QSup_{\mathrm{ccd}}$ is realized as $\mathfrak{D}(\theta_\bbB\ast\theta_{\bbA}^{\op})$. Since $\theta_\bbB\ast\theta_{\bbA}^{\op}$ is idempotent by Proposition~\ref{prop:tensor-regular}, we obtain the following result.

\begin{proposition}\label{prop:internal-hom-ccd}
Let $\mathbb{A}$ and $\mathbb{B}$ be ccd $Q$-categories. Then their internal hom $\mathbb{B}^{\mathbb{A}}$ in $\QSup$ is also ccd.
\end{proposition}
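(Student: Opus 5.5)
The plan is to realise the internal hom $\mathbb{B}^{\mathbb{A}}=\QSup(\mathbb{A},\mathbb{B})$ as $\mathfrak{D}$ of an idempotent $Q$-relation, and then use that $\mathfrak{D}$ of any regular (in particular idempotent) $Q$-relation is ccd \cite{LaiShen2018,Stubbe2007b}.

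First, since $\mathbb{A}$ and $\mathbb{B}$ are ccd, their totally below distributors $\theta_{\mathbb{A}}$ and $\theta_{\mathbb{B}}$ are idempotent, with $\mathfrak{D}\theta_{\mathbb{A}}\cong\mathbb{A}$ and $\mathfrak{D}\theta_{\mathbb{B}}\cong\mathbb{B}$. Because $\QSup(-,-)$ is functorial in both variables, we get $\QSup(\mathbb{A},\mathbb{B})\cong\QSup(\mathfrak{D}\theta_{\mathbb{A}},\mathfrak{D}\theta_{\mathbb{B}})$ as cocomplete $Q$-categories. Proposition~\ref{prop:idm-hom} then identifies this with $\DQRel(\theta_{\mathbb{B}},\theta_{\mathbb{A}})$. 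Note the variance: a left adjoint $\mathfrak{D}\theta_{\mathbb{A}}\to\mathfrak{D}\theta_{\mathbb{B}}$ corresponds to a diagonal $\theta_{\mathbb{B}}\to\theta_{\mathbb{A}}$, since $\mathfrak{D}$ is contravariant.

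Second, we apply the closed structure of Proposition~\ref{prop:idm-closed} with $\beta=\theta_{\mathbb{B}}$, $\alpha=\theta_{\mathbb{A}}$ and $\gamma=\star_{\mathrm{k}}$. This gives
\[
\DQRel(\theta_{\mathbb{B}}*\theta_{\mathbb{A}}^{\op},\star_{\mathrm{k}})\cong\DQRel(\theta_{\mathbb{B}},\theta_{\mathbb{A}}*\star_{\mathrm{k}})\cong\DQRel(\theta_{\mathbb{B}},\theta_{\mathbb{A}}).
\]
The left side is by definition $\mathfrak{D}(\theta_{\mathbb{B}}*\theta_{\mathbb{A}}^{\op})$, since $\star_{\mathrm{k}}$ is the monoidal unit and $\theta_{\mathbb{A}}*\star_{\mathrm{k}}\cong\theta_{\mathbb{A}}$ via $X\times\sigt\cong X$.

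Third, we check that the composite bijection is an isomorphism of $Q$-categories, not merely of sets. The currying $f\mapsto\widetilde f$ in the proof of Proposition~\ref{prop:idm-closed} is just a reindexing of the same $Q$-valued function. It therefore preserves pointwise joins and the scalar action $q\& -$, which are the $\QSup$-structures on both diagonal hom-sets (Proposition~\ref{prop:d-enriched}). Likewise, the isomorphism of Proposition~\ref{prop:idm-hom} is an isomorphism in $\QSup$, because both sides carry pointwise joins and actions. Hence
\[
\mathbb{B}^{\mathbb{A}}\cong\mathfrak{D}(\theta_{\mathbb{B}}*\theta_{\mathbb{A}}^{\op}).
\]

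Finally, $\theta_{\mathbb{A}}^{\op}$ is idempotent because idempotency is stable under $(-)^{\op}$. By Proposition~\ref{prop:tensor-regular}, $\theta_{\mathbb{B}}*\theta_{\mathbb{A}}^{\op}$ is then idempotent, hence regular, so its $\mathfrak{D}$-image is ccd. Being ccd is invariant under isomorphism of $Q$-categories, since it is defined via adjoints to $S$ and $Y$, so $\mathbb{B}^{\mathbb{A}}$ is ccd.

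The main obstacle is the third step. Proposition~\ref{prop:idm-hom} is stated only as a natural isomorphism, and we need it to respect the $Q$-enriched hom structure (pointwise scalar actions and joins) rather than just the underlying sets. If this is unclear, the fallback is to verify directly that the correspondence $f\mapsto(-)\odt f$ preserves joins and scalars, using the computation in the proof of Proposition~\ref{prop:d-enriched}. A bijection between skeletal cocomplete $Q$-categories that preserves joins and scalars is an isomorphism in $\QSup$ by Proposition~\ref{prop:adjoint-char}.
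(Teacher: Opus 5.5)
Your proposal is correct and follows essentially the same route as the paper: it applies Proposition~\ref{prop:idm-closed} with $\gamma=\star_{\mathrm{k}}$ and Proposition~\ref{prop:idm-hom} to get $\mathbb{B}^{\mathbb{A}}\cong\mathfrak{D}(\theta_{\mathbb{B}}*\theta_{\mathbb{A}}^{\op})$, then uses Proposition~\ref{prop:tensor-regular} and the fact that $\mathfrak{D}$ of a regular $Q$-relation is ccd. Your third step, the fallback via Proposition~\ref{prop:d-enriched}, checks that the bijections preserve pointwise joins and the scalar action, so they are isomorphisms of $Q$-categories and not just of sets; the paper leaves this implicit, and your argument handles it correctly.
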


As a direct consequence, the subcategory of ccd $Q$-categories inherits the full monoidal closed structure of $\QSup$.

\begin{corollary}\label{cor:ccd-monoidal}
The full subcategory $\QSup_{\mathrm{ccd}}$ is closed under the standard tensor product $\otimes$ and exponential objects $\bbB^\bbA$ of $\QSup$. Hence, $(\QSup_\mathrm{ccd},\otimes,\bbQ)$ forms a symmetric monoidal closed category.
\end{corollary}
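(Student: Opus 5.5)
The corollary combines Theorem~\ref{thm:main} with Proposition~\ref{prop:internal-hom-ccd}. The remaining work is to check that the monoidal closed structure of $\QSup$ restricts to the full subcategory $\QSup_{\mathrm{ccd}}$.

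\emph{Closure under $\otimes$.} Let $\bbA,\bbB$ be ccd. As recalled before Proposition~\ref{prop:idm-hom}, we have $\bbA\cong\frD\theta_\bbA$ and $\bbB\cong\frD\theta_\bbB$, where $\theta_\bbA,\theta_\bbB$ are idempotent and hence regular. Theorem~\ref{thm:main} then gives
\[
\bbA\otimes\bbB\cong\frD(\theta_\bbA*\theta_\bbB).
\]
By Proposition~\ref{prop:tensor-regular}, $\theta_\bbA*\theta_\bbB$ is idempotent. Its $\frD$-image is therefore ccd.

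\emph{Closure under exponentials.} Proposition~\ref{prop:internal-hom-ccd} gives directly that $\bbB^\bbA=\QSup(\bbA,\bbB)$ is ccd.

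\emph{The unit.} We must check that $\bbQ$ itself is ccd. I would exhibit $\bbQ\cong\frD\stk$. Since $\stk=\delta_{\sigt}$ is idempotent, every $\mu\colon\sigt\nto\sigt$ satisfies $(\mu\swarrow\stk)\circ\stk=\mu$. Hence $\frD\stk$ has underlying set $Q$, with homs $\mu'\swarrow\mu=\mu\ra\mu'$, which is exactly $\CP\stk=\bbQ$ by Example~\ref{ex:discrete}. Alternatively, one can write down the chain $T\dashv S\dashv Y$ for $\bbQ$ directly.

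\emph{Transfer of the structure.} Since $\QSup_{\mathrm{ccd}}$ is a full subcategory containing the unit, closed under $\otimes$ and closed under internal homs, several things are inherited from $\QSup$ (Proposition~\ref{prop:ccat-monoidal}). The associator, unitors and symmetry of $\QSup$, being isomorphisms between objects of the subcategory, are morphisms of $\QSup_{\mathrm{ccd}}$, and their coherence is inherited. Likewise the adjunction bijections
\[
\QSup(\bbA\otimes\bbB,\bbC)\cong\QSup(\bbB,\QSup(\bbA,\bbC))
\]
restrict verbatim by fullness.

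The only point needing care is the unit; the rest is bookkeeping. I would double-check the isomorphism $\frD\stk\cong\bbQ$ against the conventions for $\frD=\DQRel(-,\stk)$, in particular that the module action there is $q\&(-)$.
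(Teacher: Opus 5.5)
Your proposal is correct and follows the paper's proof, which likewise takes closure under $\otimes$ and internal homs from Theorem~\ref{thm:main} and Proposition~\ref{prop:internal-hom-ccd} and checks only that the unit $\bbQ\cong\CP\stk=\frD\stk$ is ccd. One small precision: every $\mu$ satisfies $\mu=((\mu)\swarrow\stk)\circ\stk$ because $\stk$ is the identity $Q$-relation on $\sigt$, not merely because it is idempotent.
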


\begin{proof}
It suffices to verify that the unit object $\bbQ$ is ccd, which follows directly from the isomorphism $\bbQ\cong\mathcal{P}\star_\mathrm{k}=\frD\star_\mathrm{k}$.
\end{proof}

We conclude this section with an explicit counterexample demonstrating that the regularity hypothesis in Theorem~\ref{thm:main} is indispensable and cannot be omitted.

\begin{example}
Let $Q=([0,1],\leq,\&)$ denote the standard Łukasiewicz quantale, with monoidal and implication operations defined by
\[
x\mathbin{\&} y=\max\{x+y-1,0\},\quad x\ra y=\min\{1-x+y,1\}.
\]

Consider the singleton-valued $Q$-relation
\[
\alpha\colon\sigt\nto\sigt,\qquad \alpha(\star,\star)=0.5.
\]
Direct verification confirms that $\alpha$ is non-regular. We first characterize the associated $Q$-category $\mathfrak{D}\alpha$. A $Q$-relation $\lambda\colon\sigt\nto\sigt$ belongs to $\mathfrak{D}\alpha$ if and only if it satisfies the fixed-point condition
\[
\lambda(\star,\star)=\bigl(\alpha(\star,\star)\ra\lambda(\star,\star)\bigr)\,\&\, \alpha(\star,\star)
=\min\bigl\{\lambda(\star,\star),\,0.5\bigr\}.
\]
This forces $\lambda(\star,\star)\in[0,0.5]$, so we identify $\mathfrak{D}\alpha$ with the interval $[0,0.5]$. Its enriched hom-operation and scalar $Q$-action are given by
$\mathfrak{D}\alpha(p,q)=p\ra q=\min\{1-p+q,\,1\}$,
 $q\cdot p=q\mathbin{\&} p=\max\{q+p-1,\,0\}$ and $p^{q}=q\ra p=\min\{1-q+p,\,0.5\}$.

The objects of the tensor product $\mathfrak{D}\alpha\otimes\mathfrak{D}\alpha$ correspond bijectively to left-adjoint $Q$-functors $F\colon\mathfrak{D}\alpha\to(\mathfrak{D}\alpha)^{\mathrm{op}}$, which are uniquely determined by two conditions:
\begin{enumerate}[label=(\arabic*)]
\item $F(q\mathbin{\&} x)=q\ra F(x)$ for all $q\in Q$ and $x\in[0,0.5]$;
\item $F(\sup X)=\inf F(X)$ for all subsets $X\subseteq[0,0.5]$.
\end{enumerate}

Every such functor admits a piecewise explicit form
\[
F_a(t)=
\begin{cases}
0.5, & 0\leq t\leq a,\\
a+0.5-t, & a<t\leq 0.5,
\end{cases}
\]
parameterized by $a\in[0,0.5]$. This yields the object isomorphism
\[
(\mathfrak{D}\alpha\otimes\mathfrak{D}\alpha)_0\cong\{F_a\mid 0\leq a\leq 0.5\}.
\]

On the other hand, the pointwise self-tensor product of $\alpha$ satisfies
\[
(\alpha*\alpha)(\star,\star)=\alpha(\star,\star)\mathbin{\&}\alpha(\star,\star)
=\max\{0.5+0.5-1,\,0\}=0.
\]
A $Q$-relation $\lambda\colon\sigt\nto\sigt$ lies in $\mathfrak{D}(\alpha*\alpha)$ if and only if $\lambda=(\lambda\swarrow(\alpha*\alpha))\circ(\alpha*\alpha)$. Since $\alpha*\alpha$ is the zero $Q$-relation, the only feasible solution is $\lambda(\star,\star)=0$, so that $\mathfrak{D}(\alpha*\alpha)\cong\{0\}$.

We therefore obtain a strict non-isomorphism
\[
\mathfrak{D}\alpha\otimes\mathfrak{D}\alpha\not\cong\mathfrak{D}(\alpha*\alpha),
\]
which demonstrates that the tensor product isomorphism established in Theorem~\ref{thm:main} fails for non-regular $Q$-relations, validating the necessity of the regularity assumption.
\end{example}

 

\bibliographystyle{plain}
\bibliography{ref}
\end{document}